%% file: main.tex
\documentclass[reqno]{amsart}

\usepackage{amssymb}
\usepackage{amsthm}
\usepackage{amsmath}
\usepackage{enumitem}
\usepackage{xpatch}
\usepackage{hyperref}

\usepackage{tikz-cd}
\usepackage{leftindex}
\usepackage[capitalise]{cleveref}
\usepackage[all,2cell]{xy}
\UseAllTwocells
\usepackage{bm} 

\usepackage{marginnote}

\title{Groupoidal polygraphic homology} 

\author{Léonard Guetta}
\address{L\'eonard Guetta, Universiteit Utrecht, Utrecht Science Park,
Hans Freudenthalgebouw, Budapeestlaan 6, Utrecht}
\email{l.s.guetta@uu.nl}

\author{Fran\c{c}ois M\'etayer}
\address{Fran\c{c}ois M\'etayer, Universit\'e Paris Cit\'e, CNRS, IRIF, F-75013, Paris, France \&
Universit\'e Paris Nanterre}
\email{metayer@irif.fr}

\input{macro}

\begin{document}

\begin{abstract} 
We show that for a $1$-category $C$, the \ook k-polygraphic homology
of $C$ for any $k\geq 1$, that is taken with cofibrant resolutions in strict \ook
k-categories, does not depend on $k$ and is canonically isomorphic to
the homology of the classifying space of $C$. When $C$ is a groupoid,
we also show this for $k=0$. In particular, this means that the
classical homology of groups can be obtained by taking cofibrant
resolutions in strict \oo-groupoids.
In order to show these results, we develop the theory of discrete
Conduché fibrations in the category of strict \ook k-categories,
building on previous work by the first-named author.
\end{abstract}

\maketitle

\input{intro}
\input{basics} 
\input{syntax} 
\input{conduche} 
\input{resolproj} 
\input{polhom} 
\input{htpykan} 
\input{comparison} 
\bibliographystyle{alpha}
\bibliography{biblio}

\end{document}

%% file: macro.tex
\swapnumbers
\theoremstyle{plain}
\newtheorem{theorem}[subsubsection]{Theorem}
\newtheorem{corollary}[subsubsection]{Corollary}
\newtheorem{proposition}[subsubsection]{Proposition}
\newtheorem{lemma}[subsubsection]{Lemma}

\theoremstyle{definition}
\newtheorem{definition}[subsubsection]{Definition}

\theoremstyle{remark}
\newtheorem{remark}[subsubsection]{Remark}
\newtheorem{paragr}[subsubsection]{} 

\crefname{theorem}{Theorem}{Theorems}
\crefname{corollary}{Corollary}{Corollaries}
\crefname{proposition}{Proposition}{Propositions}
\crefname{lemma}{Lemma}{Lemmas}

\crefname{definition}{Definition}{Definitions}
\crefname{example}{Example}{Examples}

\crefname{rem}{Remark}{Remarks}
\crefname{paragr}{}{}

\newcommand{\nmbr}[1]{\mathbb #1}
\newcommand{\N}{\nmbr{N}}
\newcommand{\Z}{\nmbr{Z}}

\newcommand{\M}{\mathcal{M}}

\newcommand{\set}[1]{\left\{#1\right\}} 
\newcommand{\setof}[2]{\left\{ #1\; | \; #2\right\}}
\newcommand{\pair}[2]{(#1,#2)}       

\newcommand{\ilim}{\varinjlim} 
\newcommand{\source}{\mathsf{s}} 
\newcommand{\target}{\mathsf{t}} 
\newcommand{\cosource}{\sigma} 
\newcommand{\cotarget}{\tau} 
\newcommand{\geninc}{\mathsf{i}} 
\newcommand{\unit}[1]{1^{#1}} 
\newcommand{\Unit}[1]{\mathsf{1}_{#1}}

\newcommand{\homset}[3]{#1(#2,#3)} 
\newcommand{\fun}[2]{{#2}^{#1}} 
\newcommand{\Ob}{\mathrm{Ob}} 

\newcommand{\op}{{\mathrm{op}}} 
\newcommand{\tr}[2]{#1/#2} 
\newcommand{\wsk}{\ast} 

\newcommand{\ctg}[1]{\mathbf{#1}}  
\newcommand{\Set}{\ctg{Set}}    
\newcommand{\Cat}{\ctg{Cat}}   
\newcommand{\nCat}[1]{\Cat_{#1}}  
\newcommand{\npCat}[2]{\Cat_{#1,#2}}  
\newcommand{\opCat}[1]{\Cat_{\omega,#1}} 
\newcommand{\funopCat}[2]{\Cat_{\omega,#1}^{#2}} 
\newcommand{\nCatp}[1]{\Cat_{#1}^{+}} 
\newcommand{\npCatp}[2]{\Cat_{#1,#2}^{+}} 
\newcommand{\oCat}{\nCat{\omega}} 
\newcommand{\glob}{\ctg{G}}
\newcommand{\Glob}{\ctg{Glob}} 
\newcommand{\nGlob}[1]{\Glob_{#1}} 
\newcommand{\oGlob}{\nGlob{\omega}} 
\newcommand{\Pol}{\ctg{Pol}} 
\newcommand{\nPol}[1]{\Pol_{#1}} 
\newcommand{\npPol}[2]{\Pol_{#1,#2}} 
\newcommand{\opPol}[1]{\npPol{\omega}{#1}} 
\newcommand{\Grpd}{\ctg{Grpd}}
\newcommand{\nGrpd}[1]{\Grpd_{#1}} 
\newcommand{\oGrpd}{\nGrpd{\omega}} 
\newcommand{\Ch}[1]{\ctg{Ch}_{#1}} 
\newcommand{\Chp}{\Ch{\geq 0}} 
\newcommand{\Ab}{\ctg{Ab}} 
\newcommand{\Term}{\ctg{1}} 

\newcommand{\RMod}[1]{\mathrm{Mod}_{#1}} 
\newcommand{\LMod}[1]{{}_{#1}\mathrm{Mod}} 

\newcommand{\functor}[1]{\mathsf{#1}} 
\newcommand{\trk}[1]{\functor{tr}_{#1}}
\newcommand{\inc}[2]{\functor{inc}_{#1}^{#2}}
\newcommand{\oinc}[1]{\inc{#1}{}}
\newcommand{\emb}[2]{\functor{j}_{#2}^{#1}}
\newcommand{\pifun}[2]{\functor{\pi}_{#2}^{#1}}
\newcommand{\maxgr}[2]{\functor{\iota}_{#2}^{#1}}
\newcommand{\skl}[1]{\functor{sk}_{#1}}
\newcommand{\oemb}[1]{\emb{#1}{}}
\newcommand{\opifun}[1]{\pifun{#1}{}}
\newcommand{\omaxgr}[1]{\maxgr{#1}{}}
\newcommand{\fgf}[1]{\functor{U}_{#1}} 
\newcommand{\frf}[1]{\functor{F}_{#1}} 

\newcommand{\abel}{\functor{\lambda}} 
\newcommand{\abelk}[1]{\functor{\abel}^{#1}}

\newcommand{\B}{\mathbb{B}} 

\newcommand{\homnat}{\theta}

\newcommand{\doubl}[2]{\ar@<2pt>[l]^{#2}\ar@<-2pt>[l]_{#1}}

\newcommand{\doubr}[2]{\ar@<2pt>[r]^{#2}\ar@<-2pt>[r]_{#1}}

\newcommand{\doubld}[2]{\ar@<2pt>[ld]^{#2}\ar@<-2pt>[ld]_{#1}}

\newcommand{\tdouble}[3]{
  \arrow[#1,shift left=0.4ex,#3]
  \arrow[#1,shift right=0.4ex,#2]
}

\newcommand{\comp}[1]{\ast_{#1}} 
\newcommand{\sce}[1]{\source_{#1}} 
\newcommand{\tge}[1]{\target_{#1}} 

\newcommand{\inv}[1]{{#1}^{-1}} 

\newcommand{\cosce}[1]{\cosource_{#1}} 
\newcommand{\cotge}[1]{\cotarget_{#1}} 
\newcommand{\frcat}[1]{#1^*} 
\newcommand{\frgp}[1]{#1^\top} 

 \newcommand{\oo}{\ensuremath{\omega}}
 \newcommand{\ook}[1]{\ensuremath{\pair{\omega}{#1}}}
 \newcommand{\pk}[2]{\ensuremath{\pair{#1}{#2}}} 
\newcommand{\clx}[2]{\pair{#1}{#2}}
\newcommand{\gni}[1]{\geninc_{#1}}

\newcommand{\Sph}[1]{\mathbb{S}_{#1}} 
\newcommand{\Dsk}[1]{\mathbb{D}_{#1}} 
\newcommand{\gcof}[1]{\functor{i}_{#1}} 
\newcommand{\gray}{\otimes}
\newcommand{\grayk}[1]{\underset{#1}{\gray}} 

\newcommand{\expr}[1]{\mathcal{E}[#1]} 
\newcommand{\WW}{\mathcal{W}}
\newcommand{\clWW}{\fcls{\WW}}
\newcommand{\wtx}[1]{\WW[#1]} 
\newcommand{\wtxr}[2]{\WW_{#1}[#2]} 
\newcommand{\wtxeq}[1]{\clWW[#1]} 
\newcommand{\formal}[1]{\mathsf{#1}} 
\newcommand{\fcst}[1]{\formal{c}_{#1}} 
\newcommand{\finv}[1]{\formal{c}^{\boldsymbol{-}}_{#1}}
\newcommand{\fid}[1]{\formal{i}_{#1}} 
\newcommand{\fcomp}[1]{\bm{\star}_{#1}} 
\newcommand{\type}[3]{#1:#2\to #3} 
\newcommand{\frel}{\leadsto} 
\newcommand{\cfrel}{\leadsto_1} 
\newcommand{\freq}{\simeq} 
\newcommand{\cans}[1]{\rho^{#1}} 
\newcommand{\fcls}[1]{\widetilde{#1}}
\newcommand{\finc}[1]{\formal{c}^{#1}}

\newcommand{\scex}[1]{\extend{\source}_{#1}}
\newcommand{\tgex}[1]{\extend{\target}_{#1}}

\newcommand{\scecl}[1]{\fcls{\source}_{#1}}
\newcommand{\tgecl}[1]{\fcls{\target}_{#1}}

\newcommand{\lgh}{\ell}

\newcommand{\WWo}{\mathcal{W}_*}
\newcommand{\clWWo}{\fcls{\WWo}}
\newcommand{\wtxo}[1]{\WWo[#1]} 
\newcommand{\wtxeqo}[1]{\clWWo[#1]} 
                              
\newcommand{\tens}[1]{\underset{#1}{\otimes}} 
\newcommand{\Ltens}[1]{\overset{\LL}{\tens{#1}}} 
\newcommand{\Tor}[2]{\functor{Tor}^{#1}_{#2}} 
\newcommand{\cst}[1]{\underline{#1}} 
\newcommand{\Ho}{\functor{H}} 
\newcommand{\LL}{\mathbb{L}} 
\newcommand{\RR}{\mathbb{R}} 
\newcommand{\Hop}[1]{\Ho^{#1,\mathrm{pol}}} 
\newcommand{\Hopol}{\Ho^{\mathrm{pol}}}

\newcommand{\Derp}[1]{\functor{D}_{\geq 0}(#1)}
\newcommand{\Loc}[1]{\mathrm{Ho}(#1)} 

\newcommand{\GZ}[1]{\overline{#1}} 

\newlength{\mylen}
  \NewCommandCopy{\hocolim}{\varinjlim}
  \makeatletter
  \NewCommandCopy{\holim@}{\varlim@}
  \xpatchcmd{\hocolim}{\varlim@}{\holim@}{}{}
  \xpatchcmd{\holim@}{lim}{holim}{}{}
  \makeatother

\newcommand{\nbd}{\nobreakdash} 
\newcommand{\extend}[1]{\overline{#1}} 
\newcommand{\cndi}{\textbf{(c1)}} 
\newcommand{\cndii}{\textbf{(c2)}} 


%% file: intro.tex
\section{Introduction}\label{sec:intro}

The category $\oCat$ of strict, globular $\oo$-categories bears a
``canonical'' model structure introduced
in~\cite{lafontetal:folkms}. In this structure, the cofibrant objects
are of the form $\frcat P$ where $P$ is a polygraph
(see~\cite{abgmmm:polybk}) and $\frcat P$ denotes the \oo-category
freely generated by $P$. Let $\Chp$ be the category of chain complexes
of abelian groups in non-negative degree equipped with its usual
projective model structure. There is a left Quillen abelianization
functor $\abel:\oCat\to \Chp$ (see~\ref{subsec:abel} below). Let $\LL\abel:\Loc{\oCat}\to\Loc{\Chp}$ denote the
left derived functor of $\abel$ between the corresponding homotopy
categories. The {\em polygraphic homology} of an \oo-category $C$
is by definition
\[\Hopol(C)=\LL\abel(C).\]
To compute this homology, we first produce
a cofibrant replacement of $C$, namely a polygraph $P$ together with 
a trivial fibration $p:\frcat P\to C$, then apply the functor $\abel$
directly to $\frcat P$, whence the name.

Now for each integer $k\geq 0$, an \ook k-category is an \oo-category
in which all $i$-cells are invertible if  $i>k$. The full
subcategory of $\oCat$ whose objects are \ook k-categories is denoted
by $\opCat k$. For instance $\opCat 0$ is just the category of strict
\oo-groupoids.
The canonical inclusion functor $\oemb k:\opCat k\to\oCat$ admits a left
adjoint $\opifun k:\oCat\to\opCat k$. It turns out that the canonical model 
structure on $\oCat$ transfers to $\opCat k$ via $\oemb k$ and that
$\opifun k$ is again left Quillen. On the other hand, the functor
$\abel$ easily factors through $\opifun k$. Thus we get an
abelianization functor $\abelk k:\opCat k\to\Chp$ such that
$\abel=\abelk k\circ\opifun k$, and $\LL\abel$ is naturally isomorphic
to $\LL\abelk k\circ \LL\opifun k$. This leads to defining the
polygraphic homology of an \ook k-category $C$ by
\[\Hop{k}(C)=\LL\abelk k(C).\]
On the other hand, each \ook k-category $C$ can be seen as an \ook
{k'}-category for any $k'$ such that $k\leq k'\leq \oo$ and therefore also has an \ook{k'}-homology $\Hop{k'\!}(C)$. In particular, for
$k'=\oo$, we recover its polygraphic homology $\Hopol(C)$, as shown in 
the following diagram:
\[
  \begin{tikzcd}
    \opCat k \ar[r,"\oemb k"]\ar[d]& \oCat\ar[r,"\opifun k"]\ar[d] & \opCat k\ar[r,"\abelk k"]\ar[d] & \Chp\ar[d]\\
  \Loc{\opCat k}\ar[r,"\RR\oemb k"']& \Loc{\oCat}\ar[r,"\LL\opifun k"']\ar[ru,Rightarrow] & \Loc{\opCat
  k}\ar[r,"\LL\abelk k"']\ar[ru,Rightarrow]& \Loc{\Chp}.
  \end{tikzcd}
  \]
As the model structure on $\opCat k$ is right-induced by $\oemb k$,
the left-hand square commutes on the nose. However $\LL\opifun k\circ
\RR\oemb k$ is {\em not} the identity on $\Loc{\opCat k}$, but has
only a natural transformation $\epsilon:\LL\opifun k\circ
\RR\oemb k\to \Unit{\Loc{\opCat k}}$. This yields a natural
transformation from $\LL\abel\circ\RR\oemb k$ to $\LL\abelk k$. In
other words, for each \ook k-category $C$, there is a canonical morphism
\begin{equation}
  \label{eq:natural}
  \homnat_{C}\colon\Hopol(C)\to \Hop k (C),
\end{equation}
natural in $C$.
The present work addresses the following question:
\begin{quote}
 {\em In which cases is the morphism~(\ref{eq:natural}) an isomorphism?}
\end{quote}
 A very simple example
already shows that the answer is by no means trivial: consider $\Z$ as an
\ook 0-category $C$ where $C_0$ is a single point, $C_1=\Z$ and all
$i$-cells are identities for $i>1$. As an object of $\opCat 0$, $C$ is
cofibrant and therefore $\Hop 0(C)$ is just $\abelk 0(C)$, that is
$\Z$ concentrated in degrees $0$ and $1$. However,
$C$ is {\em not} a cofibrant object in $\oCat$, and to compute
$\Hopol(C)$ we first need a polygraphic resolution $\frcat P\to C$. Now, for any such $P$ and each $k>1$, the set of $k$-dimensional
generators must be non-empty and the direct computation of the homology of
$\abel\frcat P$ is far from obvious.

The main result of this paper is that~(\ref{eq:natural}) is an
isomorphism in the following two cases:
\begin{itemize}
\item $C$ is a $1$-category, seen as an \ook k-category for $k\geq 1$;
  \item $C$ is a groupoid, seen as an \ook k-category for $k\geq 0$.
  \end{itemize}
In fact, if $C$ is a $1$-category or a groupoid,  we take advantage of the existence
of the usual homology $\Ho(C)$ obtained by deriving the tensor product
of $C$-modules. Precisely, the
category $\RMod{C}$ of right $C$\nbd-modules is the category of
functors \[M \colon C^{\op} \to \Ab\] from $C^{\op}$ to the
  category of abelian groups and natural transformations between them,
  and the category $\LMod{C}$ of left $C$\nbd-modules is the category
  of functors \[N \colon C \to \Ab.\] Given a right $C$\nbd-module $M$ and a
  left $C$\nbd-module $N$, we can then define their total tensor
  product as the following coend
  \[
    M \tens{C}N := \int^{c \in C}M(c)\tens{\Z} N(c).
  \]
  This defines a functor $\RMod{C}\times \LMod{C} \to \Ab$,
   right exact in each variable and thus left derivable since the
  categories of left and right $C$\nbd-modules have enough
  projectives. This allows us to define the (classical) homology of $C$
  as
  \[
    \Ho(C):=\cst{\Z}\Ltens{C}\cst{\Z},
  \]
  where $\Ltens{C}$ is the total left derived functor of $\tens{C}$,
  and $\cst{\Z}$ denotes both the constant left $C$\nbd-module and the
  constant right
  $C$\nbd-module with value $\Z$. Note that if $C$ is a
  group, this definition is the usual definition of the homology
  of $C$ with coefficients in $\Z$.

  Now, if $1\leq k\leq \oo$ and $C$ is a $1$-category seen as an  \ook
  k-category, we construct a canonical morphism
  \begin{equation}
    \label{eq:poltoclassical}
     \Hop k (C) \to \Ho(C)
   \end{equation}
   natural in $C$ and show that it is an
   isomorphism. Likewise, if $C$ is a groupoid, the same holds
   for $0\leq k\leq \oo$.

 This is a significant result on its own: it says that the
  standard homology of $C$, that is $\Ho(C)$, is canonically
  isomorphic to its polygraphic homology computed using polygraphic
  resolutions in $\opCat{k}$ for any chosen $k\geq 1$ or even $k=0$ if $C$ is
  a groupoid. For example, for a group $G$, this means that we can
  compute the singular homology of the Eilenberg--MacLane space
  $K(G,1)$, using \emph{strict} $\oo$\nbd-groupoids.

  We then complete the argument by showing that, in the case of
  $1$-categories, the map~(\ref{eq:natural}) fits into a commutative triangle
 \[
    \begin{tikzcd}[column sep=small]
      \Hopol(C) \ar[dr]\ar[rr,"(1)"]&&\Hop k (C)\ar[dl]\\
      &\Ho(C)&
    \end{tikzcd}
  \]
  where both downward arrows are instances of the
  isomorphism~(\ref{eq:poltoclassical}). Therefore the horizontal
  arrow is also an isomorphism. When $C$ is a groupoid, the same
  result extends to the value $k=0$.

  Partial results about the isomorphism~\eqref{eq:poltoclassical} have
already been established in earlier works. The case $k=\oo$ is the main result of
\cite{guetta:homcat}. For $k=1$, a proof can be
extracted from Guiraud and Malbos' work \cite[Theorem
5.4.3]{guiraudmalbos:higdns}, up to naturality issues not addressed there.

The present article follows the same strategy as~\cite{guetta:homcat}
in proving that~\eqref{eq:poltoclassical} is an isomorphism and relies on two
 auxiliary results, which are interesting in their own right.

 First, we prove that any $1$-category $C$ with a terminal object has a
 trivial polygraphic homology as an \ook k-category for any $1 \leq k
 \leq \oo$. This means that for any such $k$, we have a natural isomorphism
   \begin{equation}\label{eq:trivhlgy}
      \Hop k (C) \cong \Z,
    \end{equation}
    where $\Z$ is considered as a chain complex concentrated in degree
    $0$. If $C$ is a groupoid, then this also works for $k=0$.

    Second, we prove that for any $1$-category $C$, the colimit
    \begin{equation}\label{eq:htpycolimintro}
      C \cong \ilim_{c \in C} \tr{C}{c}
    \end{equation}
    is in fact a \emph{homotopy colimit} in $\opCat{k}$ with respect to the
    canonical model structure for any $1 \leq k \leq \oo$, and if $C$ is a
    groupoid this also works for $k=0$. 

The existence of the natural
isomorphism~\eqref{eq:trivhlgy} ultimately follows from
the monoidal properties of the canonical model structure on $\opCat{k}$
with respect to the Gray tensor product. On the other hand, the proof
that~\eqref{eq:htpycolimintro} is a homotopy colimit is more elaborate and
relies on the theory of discrete Conduché fibrations for strict \ook k-categories, extending the case $k=\oo$ treated in
\cite{guetta:poldcf}. The key result is that if \[f \colon D
\to C\] is a discrete Conduché fibration between \ook k-categories and
if $C$ is free on an \ook k-polygraph $P$, then $D$ is also free on an
\ook k-polygraph $Q$ constructed functorially out of $P$. 

\subsection*{Organization of the paper}
After some preliminaries on strict \ook k-categories in
Section \ref{sec:basics}, the paper is divided into two main parts which are mostly
independent: Sections \ref{sec:syntax} and \ref{sec:basislift} develop the theory of discrete Conduché fibrations and Sections
\ref{sec:projresol}, \ref{sec:polhom}, \ref{sec:htpykan} and \ref{sec:comparison} are dedicated to the homotopical and homological
results. Readers willing to accept Theorem \ref{thm:main} can safely
skip the first part.
\subsection*{Acknowledgments}
The authors would like to thank Philippe Malbos for useful conversations about polygraphic resolutions for $\ook 1$\nbd-categories.
\subsection*{AI disclosure} Anthropic's Claude Sonnet 5 and OpenAI's
GPT-5.6 Luna were used for proofreading at the final stage of writing this paper, specifically to catch typos and English grammar
mistakes. No mathematical content was produced by AI.


%% file: basics.tex
\section{Basic notions on\texorpdfstring{ $\pair{\omega}{k}$}{(ω,k)}-categories}\label{sec:basics}
\subsection{Strict \texorpdfstring{$\pair{\omega}{k}$}{(ω,k)}-categories}\label{subsec:okcat}

\begin{paragr}
  Consider first the small category $\glob$ of globes, whose objects are
the integers $n\in\N$ and whose morphisms 
are generated by a double family
  \[
  \cosce{n},\cotge{n}: n\to n+1,
\]
where $n\in\N$,
subject to the ``coglobular'' equations
\begin{align*}
  \cosce{n+1}\circ\cosce{n} & = \cotge{n+1}\circ\cosce{n},\\
  \cosce{n+1}\circ\cotge{n} & = \cotge{n+1}\circ\cotge{n}.
\end{align*}
Thus, for any pair $\pair mn$ of integers, the homset $\homset{\glob}{m}{n}$ contains exactly two morphisms if $m<n$, only the identity if $m=n$ and none if $m>n$.

\end{paragr}

\begin{definition}\label{def:globular_set}
  A {\em globular set} $X$ is a presheaf on $\glob$, which amounts to a sequence
$(X_n)_{n\in\N}$ of sets, together with a double sequence of {\em source} and {\em target} maps
\[
  \begin{tikzcd}
   X_0 & X_1 \tdouble{l}{"\sce{0}"'}{"\tge{0}"}& \tdouble{l}{"\sce{1}"'}{"\tge{1}"}\cdots & X_n \tdouble{l}{"\sce{n-1}"'}{"\tge{n-1}"}& \tdouble{l}{"\sce{n}"'}{"\tge{n}"} X_{n+1}& \cdots \tdouble{l}{"\sce{n+1}"'}{"\tge{n+1}"},
  \end{tikzcd}
  \]
subject to the globular equations
\begin{align*}
  \sce{n}\circ\sce{n+1} & = \sce{n}\circ\tge{n+1},\\
  \tge{n}\circ\sce{n+1}& = \tge{n}\circ\tge{n+1}.
\end{align*}
\end{definition}
\begin{paragr}
  We denote by $\oGlob$ the category of globular sets and natural
  transformations. Let $X$ be a globular set and  $m$, $n$ be integers
  such that $0\leq m<n$. The images of the two morphisms of
  $\homset{\glob}{m}{n}$ by $X$ are the two maps 
\begin{align*}
  \sce{m}^n & = \sce{m}\circ\cdots\circ \sce{n-1}\ \text{and}\\
   \tge{m}^n & = \tge{m}\circ\cdots\circ \tge{n-1}.
\end{align*}
We often drop the subscript and write $\sce{m},\tge{m}:X_n\to X_m$ in this case. The elements of $X_n$ are called {\em cells of dimension $n$}  or {\em $n$-cells}. For any $n$-cell $x\in X_n$ and $m<n$, the $m$-cells $\sce m(x)$ and $\tge m(x)$ are respectively the {\em $m$-source} and {\em $m$-target} of $x$. Whenever $\sce m(y)=\tge m(x)$, we say that $x$, $y$ are {\em $m$-composable}.
If $n>0$, $x\in X_n$, $u=\sce{n-1}(x)$ and $v=\tge{n-1}(x)$, we shall write $x:u\to v$.
Finally, two $n$-cells $x,y\in X_n$ are {\em parallel} if either $n=0$, or $n>0$ and $x$, $y$ have the same $(n{-}1)$-source and the same $(n{-}1)$-target.

\end{paragr}

\begin{paragr}
  Let $n\in \N$. By discarding everything above level $n$ in the previous construction, we get the notions of {\em $n$-globular set} and corresponding category $\nGlob n$. For any $m\in\N\cup\set{\oo}$ such that $n<m$, there is an obvious truncation functor $\nGlob m\to \nGlob n$, right adjoint to the inclusion functor $\nGlob n\to \nGlob m$.
\end{paragr}

\begin{definition}\label{def:oo-cat}
 A strict \oo-category  is a globular set $C$ equipped with a family of identity maps
$\unit n:C_{n-1}\to C_n$ for $n>0$ and a family of partial binary composition operations  $\comp i:C_n\times C_n\to C_n$ for $0\leq i< n$, subject to the following list of conditions. 
\begin{enumerate}
\item
  \begin{enumerate}
  \item For each $n>0$ and $x\in C_{n-1}$, $\unit{n}(x): x\to x$.
  \item If $0\leq i<n$ and $x,y\in C_n$ are $i$-composable, then the composite $z=x\comp i y\in C_n$ is defined and
    \begin{enumerate}
    \item for $i<n-1$,
      \begin{align*}
        \sce{n-1}(z) & =\sce{n-1}(x)\comp i\sce{n-1}(y)\quad \hbox{and}\\
        \tge{n-1}(z) & =\tge{n-1}(x)\comp i\tge{n-1}(y) ;
      \end{align*}
    \item  for $i=n-1$,
      \begin{align*}
        \sce{n-1}(z) & =\sce{n-1}(x) \quad \hbox{and}\\
         \tge{n-1}(z) & =\tge{n-1}(y).
      \end{align*}
    \end{enumerate}
    \end{enumerate}
\item
  \begin{enumerate}
  \item If $0\leq i <n-1$ and $x$, $y$ are $i$-composable $(n{-}1)$-cells, then
    \begin{align*}
      \unit n(x\comp i y) & =\unit n(x)\comp i \unit n(y).
    \end{align*}
  \item If $0\leq i<n$ and $x$ is an $n$-cell, then
    \[\unit n(\sce i(x))\comp i x=x=x\comp i\unit n(\tge i(x))\]
    where, for any $i$-cell $u$, $\unit n(u)$ stands for $\unit n\circ\cdots\circ\unit{i+1}(u)$.
  \item Compositions are {\em associative}, that is, if $0\leq i<n$ and $x$, $y$, $z$ are $n$-cells such that $x$, $y$ and $y$, $z$ are $i$-composable, then
    \[(x\comp i y)\comp i z=x\comp i (y\comp i z).\]
  \item The {\em exchange rule} holds, that is, if $0\leq i<j<n$ and $x$, $y$, $z$, $t$ are $n$-cells such that $x$, $y$ and $z$, $t$ are $j$-composable, whereas $x$, $z$ and $y$, $t$ are $i$-composable, then
    \[(x\comp j y)\comp i (z\comp j t)= (x\comp i z)\comp j (y\comp i t).\]
  \end{enumerate}
\end{enumerate}
\end{definition}

\begin{paragr}
  Let $C$, $D$ be two \oo-categories. A {\em morphism} $f:C\to D$ is a
morphism of the underlying globular sets preserving identities and
$\comp i$-compositions. Strict \oo-categories and morphisms form a
category denoted by $\oCat$. Whenever necessary to avoid confusion, we use the notations $\sce i^C$ and
$\tge i^C$ to
emphasize that the corresponding source and target maps pertain to the
\oo-category $C$.
\end{paragr}

\subsubsection{Disks and spheres}\label{ssubsec:disks}
For each $n\in\N$, we denote by $\Dsk n$ the representable globular
set $\homset{\glob}{-}{n}$, having exactly two $m$-cells for $m<n$, one
$n$-cell, and no $m$-cell for $m>n$. By removing the only $n$-cell in
$\Dsk n$, one obtains a subrepresentable globular set which we denote by $\Sph{n-1}$. 
By abuse of notation, $\Dsk n$ and $\Sph{n-1}$ will
denote the corresponding freely generated  \oo-categories~(see~\cite[14.2]{abgmmm:polybk}). These come
with a family of inclusion morphisms $\gcof n:\Sph{n-1}\to\Dsk
n$. Note that $\Sph{-1}=\emptyset$ is the initial \oo-category.
\begin{remark}
  For any $n>0$ and $n$-cell $x$, the pair $\pair{\sce{n-1}(x)}{\tge{n-1}(x)}$ will be called the {\em type} of $x$. Thus the conditions of the first group above ensure that the units and composition operations are well-typed. The conditions of the second group are rules of computation and one easily checks that all equations preserve the types. 
\end{remark}
\subsubsection{\texorpdfstring{$n$}{n}-Categories}\label{ssubsec:ncat}
For a given $n\in\N$, truncating the previous construction at dimension $n$ yields the notion of $n$-category.  We denote by $\nCat n$ the resulting category. For example $\nCat 0$ is the category $\Set$ of sets and $\nCat 1$ is the category of small categories if we interpret $f\comp 0 g$ as the composition $g\circ f$ of two morphisms.
For $n\in\N$ and $m\in\N\cup \set{\oo}$ such that $n<m$, there are 
obvious truncation functors $\trk n:\nCat m\to \nCat n$ discarding all
cells of dimension $>n$. The functor $\trk n$ is right adjoint to the
inclusion functor  $\inc nm:\nCat n\to\nCat m$, which takes an
$n$-category $C$ to the $m$-category with the same $i$-cells for
$i\leq n$ and only the required identity cells in dimensions
$i>n$. The functor $\inc n{\oo}$ will be simply denoted by $\oinc
n$. Finally, for each $n\in\N$, the {\em $n$-skeleton functor} is by
definition the endofunctor 
\[
  \skl n=\oinc n\circ\trk n:\oCat\to \oCat.
  \]

\subsubsection{Invertible cells}\label{ssubsec:invert}
Let $C$ be an \oo-category, and $0\leq i <n$. An $n$-cell $x$ of $C$ is {\em $\comp i$-invertible} if there is an $n$-cell $y$ such that $\sce i(y)=\tge i(x)$, $\tge i(y)=\sce i(x)$ satisfying the equations 
  \begin{align*}
    x\comp i y & = \unit n(\sce i(x)) \quad \hbox{and}\\
    y\comp i x & =  \unit n(\tge i(x)).
  \end{align*}
  Such an $n$-cell $y$ is necessarily unique if it exists, in which case
  we say that $y$ is  the {\em $\comp i$-inverse} of $x$. A
  $\comp {n-1}$-invertible $n$-cell $x$ is simply called {\em
    invertible} and we denote its $\comp{n-1}$-inverse by $\inv x$. 
  \begin{proposition}\label{prop:invcell}
    If $0\leq i< j<n$ and $x$ is a $\comp i$-invertible cell, it is also $\comp j$-invertible.
  \end{proposition}
  \begin{proof}
    See~\cite[14.5]{abgmmm:polybk}. Beware that the $\comp j$-inverse
    of $x$ is not the same $n$-cell as its $\comp i$-inverse except in
    very degenerate cases. 
  \end{proof}
  \begin{definition}
    Let $k\in\N\cup\set{\oo}$. An {\em \ook k-category} $C$ is
    an~\oo-category whose $n$-cells are invertible for all $n>k$.
    \end{definition}
  \begin{paragr}\label{paragr:invert}
    It turns out that \ook k-categories satisfy the stronger
    condition of $\comp i$-invertibility for {\em all} $k<i<n$ (see
    again~\cite[14.5]{abgmmm:polybk}).  The full subcategory of
    $\oCat$ whose objects are \ook{k}-categories is denoted by $\opCat
    k$. Thus $\opCat{\oo}$ is just $\oCat$. Likewise, one defines the truncated version $\npCat nk$ for
    any $0\leq k<n$.  For example $\npCat 10$ is the category $\Grpd$
    of small groupoids and $\opCat 0$ the category $\oGrpd$ of
    \oo-groupoids. For any $0 \leq k \leq k'\leq \oo$, there is a canonical
    inclusion functor
    \[
      \emb{k}{k'} \colon \opCat{k} \to \opCat{k'}.
    \]
    This functor has a left adjoint
    \[
      \pifun{k}{k'} \colon \opCat{k'} \to \opCat{k}
    \]
    that formally adds inverses to all $n$-cells with $k \leq n < k'$,
    as well as a right adjoint 
    \[
      \maxgr{k}{k'}\colon\opCat{k'}\to \opCat{k}
    \]
    sending an \ook{k'}\nbd-category to its maximal
    sub\nbd-\ook{k}\nbd-category. For $k'=\oo$, we shall simply denote
    by $\oemb k$, $\opifun k$ and $\omaxgr k$ the corresponding
    functors. Observe that for each $k$, the category $\opCat k$ is
    preserved by the above defined $n$-skeleton functors. The
    corresponding restricted functors will still be denoted by $\skl n:\opCat
    k\to\opCat k$ in the following.
 \end{paragr}

 \subsection{\texorpdfstring{$\pair{\omega}{k}$}{(ω,k)}-polygraphs}\label{subsec:okpol}
 \subsubsection{Cellular extensions}\label{ssubsec:cellext}
 Let $n>0$, $C$ be an $n$-category, and $S$ be a set. A {\em cellular
   extension} of $C$ by $S$, denoted by $\clx CS$, is a pair of maps
 \[\sce n^S,\tge n^S:S\to C_n\]
 such that, for each $a\in S$, $\sce n^S(a)$ and $\tge n^S(a)$ are
 parallel $n$-cells. These maps are left implicit in the notation
 $\clx CS$, but
 are of course part of the data. 
 
Let $\clx CS$, $\clx DT$ be two cellular extensions of the
$n$-categories $C$ and $D$ respectively.  A {\em morphism}
 \[\phi:\clx CS\to\clx DT\]
 consists of a pair $\phi=\pair fh$ where $f:C\to D$ is a morphism in
 $\nCat n$ and $h:S\to T$ is a map such that, for each $a\in S$, $\sce
 n^T(h(a))=f_n(\sce n^S(a))$ and $\tge n^T(h(a))=f_n(\tge n^S(a))$.

 Cellular extensions and morphisms form a category $\nCatp
 n$. Likewise, when restricted to $\npCat nk$, the
 construction yields a category $\npCatp nk$. Now,
 any $(n{+}1)$-category $C$ determines a cellular extension $\clx{\trk
   n C}{S}$ of its truncated $n$-category $\trk n C$ by
 $S=C_{n+1}$, where $\sce n^S=\sce n^C$ and $\tge n^S=\tge n^C$.
 This gives a forgetful functor
 \[\fgf n:\nCat {n+1}\to \nCatp n.\]
 This functor $\fgf n$ admits a left adjoint
 \[\frf n:\nCatp n\to\nCat {n+1}.\]
Now, for any $k\leq n$, the functor $\fgf n$ restricts to
$\npCat{n+1}{k}$ and if $C$ is an~$\pair{n{+}1}{k}$-category, then,
for $k<n$, 
$\fgf nC$ is a cellular extension of an $\pair nk$-category. In the
special case where $k=n$, $\fgf nC$ is of course a cellular extension
of the (plain) $n$-category $\trk nC$.
Thus, for $k<n$, we get a forgetful functor
\[\fgf{n,k}:\npCat{n+1}{k}\to \npCatp nk.\]
This functor $\fgf{n,k}$ admits a left adjoint
\[\frf{n,k}:\npCatp nk\to \npCat{n+1}{k}.\]
Likewise, in the special case where $n=k$, we get a functor $\fgf
{n,n}:\npCat{n+1}{n}\to \nCatp n$
and its left adjoint $\frf{n,n}:\nCatp n\to \npCat{n+1}{n}$.
\cref{sec:syntax} below is devoted to a precise description of $\frf
{n,k}$ for a fixed $k$ and any $n\geq k$.
We also refer to~\cite{batanin:comfmg} and~\cite[Ch.18]{abgmmm:polybk} for
a comprehensive account over this adjunction. 

\subsubsection{Polygraphs}\label{ssubsec:polyg}
Let $k\in\N$ be a fixed integer. We define, by induction on $n\in\N$,
the notions of {\em $n$-polygraph} up to $n=k$ and {\em
  $\pair{n}{k}$-polygraph} from $n=k{+}1$ on.
\begin{itemize}
\item For $n=0$, an
$n$-polygraph $P$ is just a set $P_0$, and the $n$-category it
generates, denoted by $\frcat P$, is just the same set $\frcat P_0=P_0$.
\item Let $n<k$ and suppose we have defined the notion of
  $n$-polygraph together with the free $n$-category it
  generates. An $(n{+}1)$-polygraph $P$ is a cellular extension of the
  form $\clx{\frcat Q\!}{P_{n+1}}$ where $Q$ is an $n$-polygraph and
  $\frcat Q$ the free $n$-category generated by $Q$. The free
  $(n{+}1)$-category generated by $P$ is $\frcat P=\frf
  nP$.
\item For $n=k$, an~$\pair{n{+}1}{n}$-polygraph $P$ is a cellular
  extension of the form $\clx{\frcat Q\!}{P_{n+1}}$ where $Q$ is an
  $n$-polygraph, and the free $\pair{n{+}1}n$-category generated by
  $P$ is $\frgp P=\frf{n,n}P$.
  \item Let $n>k$ and suppose we have defined the notion of $\pair
    nk$-polygraph and freely generated $\pair nk$-category. An
    $\pair{n{+}1}{k}$-polygraph is a cellular extension  of the form
    $\clx{\frgp Q\!}{P_{n+1}}$ where $Q$ is an $\pair nk$-polygraph and
    $\frgp Q$ the free $\pair nk$-category generated by $Q$. The free
    $\pair{n{+}1}k$-category generated by $P$ is $\frgp P=\frf{n,k}
    P$.
 \end{itemize}   
The morphisms of $n$-polygraphs, for $n\leq k$ and~$\pair
nk$-polygraphs for $n>k$ are defined similarly by
induction on $n$. 
\begin{itemize}
\item If $P$, $P'$ are $0$-polygraphs, a morphism $p:P\to P'$ is
  just a map between the corresponding sets.
\item Let $n< k$ and $P$, $P'$ be two $(n{+}1)$-polygraphs defined by
  cellular extensions $\clx{\frcat Q\!}{P_{n+1}}$ and  $\clx{\frcat{Q'}\!}{P'_{n+1}}$ respectively. A morphism $p:P\to P'$ is a
  morphism
  \[\clx{\frcat q}{h}:\clx{\frcat Q\!}{P_{n+1}}\to \clx{\frcat
      {Q'}\!}{P'_{n+1}}\]
  of cellular extensions such that $q:Q\to Q'$ is a morphism of
  $n$-polygraphs and $\frcat q=\frf n q$.
\item Let $n= k$ and $P$, $P'$ be two $\pair{n{+}1}n$-polygraphs defined by
  cellular extensions $\clx{\frcat Q\!}{P_{n+1}}$ and  $\clx{\frcat
    {Q'}\!}{P'_{n+1}}$ respectively. A morphism $p:P\to P'$ is a
  morphism
  \[\clx{\frcat q}{h}:\clx{\frcat Q\!}{P_{n+1}}\to \clx{\frcat
      {Q'}\!}{P'_{n+1}}\]
  of cellular extensions such that $q:Q\to Q'$ is a morphism of
  $n$-polygraphs and $\frcat q=\frf n q$.  
  \item Let $n> k$ and $P$, $P'$ be two $\pair{n{+}1}k$-polygraphs defined by
  cellular extensions $\clx{\frgp Q\!}{P_{n+1}}$ and  $\clx{\frgp
    {Q'}\!}{P'_{n+1}}$ respectively. A morphism $p:P\to P'$ is a
  morphism
  \[\clx{\frgp q}{h}:\clx{\frgp Q\!}{P_{n+1}}\to \clx{\frgp
      {Q'}\!}{P'_{n+1}}\]
  of cellular extensions such that $q:Q\to Q'$ is a morphism of
  $\pair nk$-polygraphs and $\frgp q=\frf{n,k} q$.
\end{itemize}
Polygraphs and morphisms form a  sequence of categories
  \[
    \begin{tikzcd}[column sep=small]
    \nPol 0 &\nPol 1\ar[l]& \cdots\ar[l] & \nPol k\ar[l] & \npPol{k+1}k \ar[l]& \cdots\ar[l] &
    \npPol nk\ar[l] & \cdots\ar[l]  
    \end{tikzcd}
    \]
  where the arrows represent the obvious truncation functors. Finally,
  the category $\opPol k$ of \ook k-polygraphs is the projective limit of the
  above diagram of categories.
\begin{paragr}\label{paragr:okpol}
  For $n>k$, the structure of an $\pair nk$-polygraph $P$,  is
  summarized by the following diagram of sets:
  \[
  \begin{tikzcd}[row sep=huge]
    P_0\ar[d,"\gni 0" description]&P_1\ar[d,"\gni
    1" description]\tdouble{ld}{"\sce 0"'}{"\tge 0"}&\phantom{P_2}\tdouble{ld}{"\sce
      1"'}{"\tge 1"}&P_k\ar[d,"\gni{k}"
    description]\tdouble{ld}{"\sce{k-1}"'}{"\tge{k-1}"}&P_{k+1}\ar[d,"\gni{k+1}"
    description]\tdouble{ld}{"\sce k"'}{"\tge
      k"}&\phantom{P_{k+2}}\tdouble{ld}{"\sce{k+1}"'}{"\tge{k+1}"}&P_{n-1}\ar[d,"\gni{n-1}"
    description]\tdouble{ld}{"\sce{n-1}"'}{"\tge{n-1}"}&P_n\tdouble{ld}{"\sce n"'}{"\tge n"}\\
    \frcat P_0&\frcat P_1\tdouble{l}{""}{""}&\cdots\tdouble{l}{""}{""}&\frcat P_k\tdouble{l}{""}{""}&\frgp
    P_{k+1}\tdouble{l}{""}{""}&\cdots\tdouble{l}{""}{""}&\frgp P_{n-1}\tdouble{l}{""}{""}.&
  \end{tikzcd}
  \]
  At each dimension $l$, a new set $P_{l+1}$ of {\em generators} is
  attached, via the source and target maps $\sce l$, $\tge l$, to the free
  $l$-category for $l\leq k$, and  $\pair lk$-category for $l>k$ already built at this level.  The
  maps $\gni l:P_l\to \frcat{P}_l$ for $l\leq k$ and $\gni l:P_l\to
  \frgp{P}_l$ for $l>k$ represent the insertion of the generators in
  the $l$-cells of the freely generated category.
\end{paragr}
\begin{paragr}\label{paragr:opolpushout}
  Let $P$ be an $\ook k$-polygraph. The freely generated $\ook
  k$-category $\frgp P$ is the colimit of the following diagram of
  $\ook k$-categories
  \[
    \begin{tikzcd}
      0\ar[r]&\skl 0(\frgp P)\ar[r] & \cdots\ar[r] &\skl{n-1}(\frgp
      P)\ar[r] &\skl n(\frgp P)\ar[r]&\cdots
    \end{tikzcd}
    \]
  where $\skl{n-1}(\frgp P)\to\skl n(\frgp P)$ is
  obtained by taking the following pushout in $\opCat k$:
  \begin{equation}
    \label{eq:polpushout}
    \begin{tikzcd}
      \displaystyle\coprod_{P_{n}} \opifun k (\Sph{n-1})
     \ar[d,"\coprod_{P_{n}}\opifun k(\gcof {n})"',start anchor={[yshift=2.6ex]},end anchor={[yshift=-0.25ex]}]\ar[r]& \skl{n-1}(\frgp P\ar[d])\\
      \displaystyle\coprod_{P_{n}}\opifun k(\Dsk{n})\ar[r]& \skl
      n(\frgp P).
      \ar[from=1-1,to=2-2,phantom,"\ulcorner",very near end]
    \end{tikzcd}
  \end{equation}
  Here the top arrow assigns to each $n$\nbd-generator in $P_n$ a pair of parallel $(n{-}1)$-cells in
  $\skl{n-1}\frgp P$. This defines a cellular extension 
  $\clx{\trk{n-1}\frgp P}{P_n}$.  Now the fact that
  (\ref{eq:polpushout}) is a pushout square exactly amounts to the statement $\trk n\frgp
  P=\frf{n-1,k}\clx{\trk{n-1}\frgp P}{P_n}$.
\end{paragr}


%% file: syntax.tex
\section{Syntax}\label{sec:syntax}

\subsection{Well-typed formulas}\label{subsec:wtform}

\subsubsection{Formal expressions}\label{ssubsec:formex}
Let $0\leq k\leq n$, $C$ be an $\pair nk$-category, and  $\clx CS$ be a
cellular extension of $C$ by a set $S$, with $\sce n^S,\tge n^S:S\to C_n$. For each $a\in S$ and $x$,
$y\in C_n$, the notation $a:x\to y$ means 
$x=\sce n^S(a)$ and $y=\tge n^S(a)$. To $\clx CS$ we first associate
the formal language $\expr S$ whose formulas are given by the grammar
\[
  e ::= \fcst{a} \mid \finv{a} \mid \fid{x} \mid (e\fcomp i e)
\]
where $a\in S$, $x\in C_n$ and $0\leq i\leq n$. Note that this
language has the unique parsing property. This allows us to reason by
structural induction on the complexity of formulas in $\expr S$. Thus,
in order to completely determine a map $f$ from $\expr S$ to an arbitrary
set $X$, it suffices to define $f(e)$ for $e$ of the form $\fcst a$,
$\finv a$ or $\fid x$ and to provide the rules to evaluate $f((e_1\fcomp
i e_2))$ from the values $f(e_1)$ and $f(e_2)$ for each $i\leq n$.
For instance, we may define a {\em length} map $\lgh:\expr S\to \N$
by setting $\lgh(\fcst a)=\lgh(\finv a)= \lgh(\fid x)=1$ and $\lgh
(e_1\fcomp i e_2)=\lgh(e_1)+\lgh(e_2)$.

\subsubsection{Typing}\label{ssubsec:types}
We now define the
subset  $\wtx S\subset\expr S$ of {\em well-typed formulas}.
Precisely, we define the statement
\[\type wxy\]
meaning that {\em $w\in\wtx S$ is of type $\pair xy$} with $x$, $y$ parallel
cells in $C_n$, by the following structural induction:
\begin{itemize}
\item for each $a:x\to y\in S$ ,
  $\type{\fcst{a}}{x}{y}$
  and $\type{\finv{a}}{y}{x}$;
\item for  each $x\in C_n$, $\type{\fid{x}}{x}{x}$;
 \item if $\type wxy$ and $\type{w'}yz$, then $w''=(w\fcomp n w')$ is
   well-typed and $\type{w''}{x}{z}$;
 \item if $0\leq i<n$, $\type wxy$, $\type{w'}{x'}{y'}$ and $\sce
   i{x'}=\tge i{x}$, then $w''=(w\fcomp i w')$ is well-typed and
   $\type{w''}{x\comp i x'}{y\comp i y'}$. 
 \end{itemize}
 One readily checks that whenever $\type wxy$ is derivable from the
 preceding rules, then $x$, $y$ are well defined parallel cells in
 $C_n$, so that the maps $\sce n^S$, $\tge n^S$ extend to maps
 \[\scex n^S, \tgex n^S :\wtx S\to C_n\]
 such that, if $\type wxy$,  then $\scex n^S w=x$ and $\tgex n^S
 w=y$. The map $\finc S:S\to \wtx S$  taking
 $a\in S$ to $\fcst a\in\wtx S$ factorizes $\sce n^S$ and $\tge n^S$
 as $\sce n^S=\scex n^S \finc S$ and $\tge n^S=\tgex n^S\finc S$
 respectively. By composition with
 $\sce i^C,\tge i^C:C_n\to C_i$ for any $i<n$, we define likewise
 $\scex i^S,\tgex i^S:\wtx S\to C_i$. Note that the map $\finc S$ is injective. 
 
 \begin{paragr}\label{paragr:transl}
  Let $C$, $D$ be $\pair nk$-categories, $S$, $T$ be two sets,  $\clx
  CS$, $\clx DT$ be cellular extensions, and
  \[\phi=\pair fh: \clx CS\to\clx DT \]
  be a morphism in $\npCatp nk$. A map $g:\wtx S\to\wtx T$ is called a {\em
   $\phi$-compatible translation} if
  \begin{itemize}
  \item for each $w\in\wtx S$, $\scex n^Tg(w)=f_n(\scex n^S w)$ and
    $\tgex n^T g(w)=f_n(\tgex n^S w)$;
    \item for each $a\in S$, $g(\fcst a)=\fcst{h(a)}$ and $g(\finv
      a)=\finv{h(a)}$;
    \item for each $x\in C_n$,  $g(\fid x)=\fid{f_n(x)}$;
    \item  for each $w_1,w_2\in\wtx S$ and $i\leq n$ such that
      $w=(w_1\fcomp i w_2)$ is well-typed, $g(w)=(g(w_1)\fcomp i g(w_2))$.  
    \end{itemize}
\end{paragr}
\begin{lemma}\label{lemma:transl}
  Let $\phi=\pair fh:\clx CS\to \clx DT$ be a morphism of cellular
  extensions. There is a unique $\phi$-compatible translation $\wtx\phi:\wtx S\to \wtx T$.
\end{lemma}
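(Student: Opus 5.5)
Since $\expr S$ has the unique parsing property, I would first define a map $\extend g:\expr S\to\expr T$ on all formal expressions by structural induction, and only afterwards restrict it to well-typed formulas. Concretely, set $\extend g(\fcst a)=\fcst{h(a)}$, $\extend g(\finv a)=\finv{h(a)}$, $\extend g(\fid x)=\fid{f_n(x)}$ and $\extend g((e_1\fcomp i e_2))=(\extend g(e_1)\fcomp i\extend g(e_2))$. By unique parsing this is a well-defined map, and it is the only map $\expr S\to\expr T$ satisfying these clauses.

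The main step is to show that $\extend g$ sends $\wtx S$ into $\wtx T$ and preserves types. More precisely, I will show that whenever $\type wxy$ is derivable in $\wtx S$, the statement $\type{\extend g(w)}{f_n(x)}{f_n(y)}$ is derivable in $\wtx T$. I argue by induction on the derivation, which amounts to structural induction on $w$ since the typing rules are syntax-directed.
\begin{itemize}
\item For $a:x\to y$ in $S$, the morphism condition on $\phi$ gives $h(a):f_n(x)\to f_n(y)$. Hence $\fcst{h(a)}$ and $\finv{h(a)}$ have the required types.
\item For $\fid x$ the claim is immediate.
\item For $(w\fcomp n w')$ with $\type wxy$ and $\type{w'}yz$, the induction hypothesis gives types $(f_n x,f_n y)$ and $(f_n y,f_n z)$, so the $\fcomp n$ rule applies.
\item For $(w\fcomp i w')$ with $i<n$, we have $\sce i x'=\tge i x$. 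Since $f$ is a morphism of $\pair nk$-categories, it commutes with sources and targets, so $\sce i f_n(x')=\tge i f_n(x)$ and the rule applies. The resulting type is $(f_n(x)\comp i f_n(x'),\,f_n(y)\comp i f_n(y'))$, which equals $(f_n(x\comp i x'),\,f_n(y\comp i y'))$ because $f$ preserves $\comp i$.
\end{itemize}
This is the only step with any real content, namely the use of functoriality of $f$ in the $\fcomp i$ case with $i<n$. Even there it is routine.

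Set $\wtx\phi$ to be the restriction of $\extend g$ to $\wtx S$. The type computation is exactly the first condition of a $\phi$-compatible translation, namely $\scex n^T\wtx\phi(w)=f_n(\scex n^S w)$ and the analogous identity for targets. The remaining three conditions hold by construction.

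For uniqueness, let $g$ be any $\phi$-compatible translation. I show $g(w)=\wtx\phi(w)$ by structural induction on $w\in\wtx S$. The generators $\fcst a$, $\finv a$ and $\fid x$ are handled directly by the defining clauses. For a composite, any well-typed $w=(w_1\fcomp i w_2)$ has well-typed subformulas $w_1,w_2$, because typing is syntax-directed and unique parsing identifies the last rule used. The compatibility clause then forces $g(w)=(g(w_1)\fcomp i g(w_2))$, and the induction hypothesis finishes the argument.
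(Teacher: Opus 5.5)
Your proof is correct and follows essentially the same route as the paper. The paper likewise defines the translation by structural recursion and shows by structural induction that a formula of type $(x,y)$ is sent to one of type $(f_n x, f_n y)$, so the map lands in $\wtx T$; the only cosmetic difference is that the paper defines its map on $\wtx S$ with values in $\expr T$, while you define it on all of $\expr S$ and then restrict.
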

\begin{proof}
  By structural induction on $w\in\wtx S$, there is a unique map
  \[g:\wtx S\to \expr T\]
  such that $g(\fcst a)=\fcst{h(a)}$, $g(\finv
      a)=\finv{h(a)}$, $g(\fid x)=\fid{f_n(x)}$ and \[g((w_1\fcomp i
      w_2))=(g(w_1)\fcomp i g(w_2)).\] For this map $g$, we easily
      show, by structural induction on $w$, that
      whenever $\type wxy$, then $\type{g(w)}{f_nx}{f_n y}$. Therefore
      $g$ factors through the inclusion $\wtx T\hookrightarrow \expr T$ of
      well-typed formulas and yields the required $\phi$-compatible
      translation \[\wtx\phi:\wtx S\to \wtx T.\qedhere\] 
    \end{proof}
    \begin{remark}\label{rmk:functoriality_of_wtx}
      By \cref{lemma:transl}, the correspondence
      \[
        \clx CS\mapsto \wtx S, (\phi:\clx CS\to \clx
      DT)\mapsto \wtx\phi
        \]
     defines a functor 
      $\WW:\npCatp nk\to\Set$.
    \end{remark}

 \begin{paragr}
   Let $\clx CS$ be a cellular extension of the $\pair nk$-category
   $C$ by a set $S$, $D$ be an $\pair{n{+}1}k$-category, and $f:C\to \trk n
   D$ be a morphism. A map $g:\wtx S\to D_{n+1}$ is called {\em $f$-compatible} if:
   \begin{itemize}
   \item for each $w\in\wtx S$, $\sce n^Dg(w)=f_n(\scex n^S w)$ and
     $\tge n^D g(w)=f_n(\tgex n^S w)$;
    \item for each $a\in S$, $g(\finv a)$ is the inverse of $g(\fcst
      a)$;
    \item for each $x\in C_n$, $g(\fid x)=\unit{n+1}(f_n(x))$;
    \item for each $w_1,w_2\in\wtx S$ and $i\leq n$ such that
      $w=(w_1\fcomp i w_2)$ is well-typed, $g(w)=g(w_1)\comp i g(w_2)$.  
    \end{itemize}
    Given $C$, $S$, $D$ and $f$ as above, we obtain the following result.
  \end{paragr}
  \begin{lemma}\label{lemma:mapext}
    Let $h:S\to D_{n+1}$ be a map such that $\sce n^Dh=f_n \sce
    n^S$ and $\tge n^Dh=f_{n} \tge n^S$. There is a unique
    $f$-compatible map
    $\extend h:\wtx S\to D_{n+1}$
    such that $\extend h\finc S=h$.
  \end{lemma}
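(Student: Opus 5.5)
Both existence and uniqueness will come from structural induction on well-typed formulas, as in the proof of \cref{lemma:transl}. One preliminary remark: $D$ is an $\pair{n{+}1}k$-category with $k\leq n$, so every $(n{+}1)$-cell of $D$ is invertible. In particular $h(a)$ has an inverse $\inv{h(a)}$ for every $a\in S$, and this is the only place where the hypothesis on $D$ is used.

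\emph{Uniqueness.} Suppose $g,g'\colon\wtx S\to D_{n+1}$ are $f$-compatible and both extend $h$. I argue by induction on the length $\lgh(w)$ of $w\in\wtx S$.
On generators they agree: $g(\fcst a)=h(a)=g'(\fcst a)$.
On formal inverses they agree because inverses are unique: $g(\finv a)$ and $g'(\finv a)$ are both the inverse of $h(a)$.
On identities they agree: $g(\fid x)=\unit{n+1}(f_nx)=g'(\fid x)$.
For a composite, unique parsing writes $w=(w_1\fcomp i w_2)$ with $w_1,w_2$ strictly shorter. I need $w_1,w_2\in\wtx S$, which holds because the only typing rule producing a formula of the form $(w_1\fcomp i w_2)$ requires both components to be well-typed. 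Then $g(w)=g(w_1)\comp i g(w_2)=g'(w_1)\comp i g'(w_2)=g'(w)$.

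\emph{Existence.} I define $\extend h$ on $\wtx S$ by structural induction and prove the source/target condition at the same time, namely that $\type wxy$ implies $\extend h(w)\colon f_nx\to f_ny$ in $D$. Call this property~$(\ast)$.
\begin{itemize}
\item Set $\extend h(\fcst a)=h(a)$. Property $(\ast)$ is exactly the hypothesis on $h$.
\item Set $\extend h(\finv a)=\inv{h(a)}$. Its type is the type of $h(a)$ reversed, which matches $\type{\finv a}yx$.
\item Set $\extend h(\fid x)=\unit{n+1}(f_nx)$. Its type is $(f_nx,f_nx)$, as required.
\item For $w=(w_1\fcomp n w_2)$ with $\type{w_1}xy$ and $\type{w_2}yz$, the induction hypothesis makes $\extend h(w_1)$ and $\extend h(w_2)$ $n$-composable. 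Set $\extend h(w)=\extend h(w_1)\comp n\extend h(w_2)$, whose type is $(f_nx,f_nz)$.
\item For $i<n$, $w=(w_1\fcomp i w_2)$ with $\type{w_1}xy$, $\type{w_2}{x'}{y'}$ and $\sce i x'=\tge i x$. Then
\[\sce i\extend h(w_2)=\sce i f_nx'=f_i\sce ix'=f_i\tge ix=\tge i\extend h(w_1),\]
using that $f$ commutes with sources and targets. So the two cells are $i$-composable and I set $\extend h(w)=\extend h(w_1)\comp i\extend h(w_2)$. By the typing axioms (1)(b)(i), and since $f$ preserves $\comp i$, its $n$-source is $f_nx\comp i f_nx'=f_n(x\comp ix')$, and similarly for the target.
\end{itemize}
This construction is well defined because parsing is unique, so each $w$ falls under exactly one clause, with uniquely determined components. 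The four conditions of $f$-compatibility then hold by construction, and $\extend h\,\finc S=h$ is the first clause.

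\emph{Where the work lies.} The only real point is that the map must be defined and shown well-typed simultaneously. For this I will mirror \cref{lemma:transl}: first define a partial map on $\expr S$ by structural recursion, then prove by induction that it is defined on all of $\wtx S$ with the types given in $(\ast)$. The composability computation for $\fcomp i$ with $i<n$, displayed above, is the step that needs care.
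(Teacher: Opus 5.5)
Your proposal is correct and follows essentially the same route as the paper: you define $\extend h$ by structural induction on well-typed formulas while tracking types, and you use the same composability check $\sce i\extend h(w_2)=f_i(\sce i x')=f_i(\tge i x)=\tge i\extend h(w_1)$ in the case $i<n$. Your inductive uniqueness argument simply spells out what the paper treats as immediate from $f$-compatibility.
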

  \begin{proof}
    Uniqueness immediately follows from $f$-compatibility. As for the
    existence, we define, for each $w:x\to y\in\wtx S$,
  $\extend h(w):f_n(x)\to f_n(y)\in D_{n+1}$, by structural induction
  on $w$:
  \begin{itemize}
  \item for $w=\fcst a$, where $a:x\to y\in S$, $\extend
    h(w)=h(a):f_n(x)\to f_n(y)$;
  \item for $w=\finv a$, where $a:x\to y\in S$, $\extend
    h(w)=\inv{h(a)}:f_n(y)\to f_n(x)$;
  \item for $w=\fid x$, where $x\in C_n$, $\extend
    h(w)=\unit{n+1}(f_n(x))$;
  \item if $w_1:x\to y$ and $w_2:y\to z$ are such that $\extend
    h(w_1):f_n(x)\to f_n(y)$ and $\extend h(w_2):f_n(y)\to f_n(z)$,
    then $w=(w_1\fcomp n w_2):x\to z$ and $d=\extend h(w_1)\comp
    n\extend h(w_2)$ is a well defined cell of $D_{n+1}$ such that
    $d:f_n(x)\to f_n(z)$ and we may set $\extend h(w)=d$;
  \item if $i<n$, $w_1:x\to y$, $w_2:z\to t$, $\sce i^Cz=\tge i^Cx $
    are such that $\extend h(w_1):f_n(x)\to f_n(y)$ and
    $\extend h(w_2):f_n(z)\to f_n(t)$, then
    $w:x\comp i z\to y\comp i t\in \wtx S$ and $\sce i^D\extend
    h(w_2)=\sce i^Df_n(z)=f_i(\sce i^Cz)=f_i(\tge i^Cx)=\tge
    i^Df_n(x)=\tge i^D\extend h(w_1)$, so that $d=\extend h(w_1)\comp
    i\extend h(w_2)$ is a well defined cell in $D_{n+1}$ with
    $d:f_n(x\comp i z)\to f_n(y\comp i t)$ and we may set $\extend h(w)=d$.
  \end{itemize}
  By construction, the map $\extend h$ so defined is $f$-compatible
  and $\extend h\finc S =h$.
  \end{proof}

\subsection{Formal description of \texorpdfstring{$\frf{n,k}$}{F(n,k)}}
\label{subsec:congruence}

 \subsubsection{Equivalence among formal expressions}\label{ssubsec:eqform}
 We now define the binary relation of ``elementary move'' among well
 typed expressions, denoted by $\frel$, as follows:
\begin{enumerate}[label=\textbf{(e\arabic*)}]
\item\label{ei} for any pair $x$, $x'$ of $i$-composable cells in $C_n$,
  $(\fid{x}\fcomp i\fid{x'})\frel\fid{x\comp i x'} $;
\item\label{eii} for each $a:x\to y$ in $S$, $(\fid{x}\fcomp{n}\fcst{a})\frel
  \fcst{a}$ and $(\fcst a\fcomp{n} \fid{y})\frel \fcst a$;
 \item\label{eiii} for each $a:x\to y$ in $S$, $(\fid{y}\fcomp{n}\finv{a})\frel
   \finv{a}$ and $(\finv a\fcomp{n} \fid{x})\frel \finv a$;
\item\label{eiv} for each $a:x\to y$ in $S$, $0\leq i< n$, $u=\unit n(\sce i
  x)$ and $v=\unit n(\tge i x)$, $(\fid{u}\fcomp{i}\fcst{a})\frel
  \fcst{a}$ and $(\fcst a\fcomp{i} \fid{v})\frel \fcst a$;
\item\label{ev} for each $a:x\to y$ in $S$, $0\leq i< n$, $u=\unit n(\sce i
  x)$ and $v=\unit n(\tge i x)$, $(\fid{u}\fcomp{i}\finv{a})\frel
  \finv{a}$ and $(\finv a\fcomp{i} \fid{v})\frel \finv a$;  
 \item\label{evi} for all expressions $w_1,w_2,w_3$ in $\wtx S$ such that $w_1,w_2$ and
   $w_2,w_3$ are $i$-composable, $((w_1\fcomp i w_2)\fcomp i w_3)\frel
   (w_1\fcomp i(w_2\fcomp i w_3))$;
 \item\label{evii} for all expressions $w_1,w_2,w_3,w_4$ in $\wtx S$ and $i<j$
   such that $w_1,w_2$ and $w_3,w_4$ are
   $j$-composable and $w_1,w_3$ are $i$-composable,
   \[
     ((w_1\fcomp i w_3)\fcomp j (w_2\fcomp i w_4))\frel ((w_1\fcomp j
     w_2)\fcomp i (w_3\fcomp j w_4));
   \]
  \item\label{eviii} for each $a:x\to y$ in $S$, $(\fcst a\fcomp n\finv a)\frel
    \fid{x}$ and $(\finv a\fcomp n\fcst a)\frel \fid y$. 
  \end{enumerate}
  Let now $\freq$ denote the congruence generated by
  $\frel$ on $\wtx S$, that is, the smallest equivalence relation containing $\frel$ and compatible with compositions. Precisely, if
  $w,w'\in\wtx S$, the relation  ``$w$ rewrites in one step
  to $w'$'',
  denoted by $w\cfrel w'$, is defined inductively as follows: $w\cfrel
  w'$ if either (i)~$w\frel w'$ or (ii)~$w=(x\fcomp i u)$ and $w'=(x\fcomp i u')$
  with $u\cfrel u'$ or (iii)~$w=(u\fcomp i x)$,
  and $w'=(u'\fcomp i x)$ with $u\cfrel u'$. The relation $\freq$ is
  then the smallest equivalence relation containing $\cfrel$. In other
  words, $w\freq w'$ if and only if there is a finite sequence
  $w_0,...,w_l$ where $l\geq 0$ of formulas in $\wtx S$ such that
  $w=w_0$, $w'=w_l$ and for each $i<l$, either $w_i\cfrel w_{i+1}$ or
  $w_{i+1}\cfrel w_i$.

  Let us denote by $\wtxeq S$
  the set $\wtx S/\!\!\freq$ of equivalence classes of $\wtx S$ under
  $\freq$ and by
  \[\cans S: \wtx S\to \wtxeq S\]
  the canonical surjection taking an expression $w$ to its
  equivalence class $\cans S(w)=\fcls w$. As the type of an
  expression is invariant under $\frel$, therefore also under $\freq$, 
 the source and target maps
  \[\scex n^S,\tgex n^S:\wtx S\to C_n\]
  factor through $\cans S$, and define source and target maps on $\wtxeq S$:
  \[\scecl n^S,\tgecl n^S:\wtxeq S\to C_n.\]
  More generally, for any $i\leq n$, we define the $i$-source and
  $i$-target maps
  \[\scecl i,\tgecl i:\wtxeq S\to C_i\]
  by composing $\sce n^S,\tge n^S:\wtxeq S\to C_n$ with the maps $\sce i^C,\tge
  i^C:C_n\to C_i$ already given. Now, for any $i$-composable formal expressions
  $w,w'\in \wtx S$ and $w''=(w\fcomp i w')$, the class $\cans S(w'')=\fcls{w''}$ only
  depends on $\fcls w$ and $\fcls{w'}$. Hence we may define 
 $\comp i$-compositions in $\wtxeq S$ by
 \[\fcls{w}\comp i\fcls{w'}=\fcls{(w\fcomp i w')}.\]
 Also, to any $x\in C_n$ we associate its $(n{+}1)$-unit in $\wtxeq
  S$ by
  \[\unit{n+1}(x)=\fcls{\fid x}.\]
  \begin{lemma}\label{lemma:inv}
    For each $\type wxy\in \wtx S$, there is a $\type{w'}yx\in \wtx S$
    such that $(w\fcomp n w')\freq\fid x$ and $(w'\fcomp n w)\freq
    \fid y$.
  \end{lemma}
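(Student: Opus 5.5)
We argue by structural induction on $w\in\wtx S$, defining an explicit formal inverse $w'$ of each well-typed expression and checking the two congruences using the elementary moves \ref{ei}--\ref{eviii} together with the fact that $\freq$ is a congruence.

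The base cases are direct. For $w=\fcst a$ with $a:x\to y$, we take $w'=\finv a$. The required relations are then exactly \ref{eviii}. For $w=\finv a$ we take $w'=\fcst a$, again by \ref{eviii}. For $w=\fid x$ we take $w'=\fid x$, since $(\fid x\fcomp n\fid x)\frel\fid{x\comp n x}=\fid x$ by \ref{ei}. Here $\comp n$ on $n$-cells of $C$ is read via the convention that $x\comp n x=x$, and we must check that \ref{ei} indeed covers $i=n$. If it does not, we will instead use \ref{eii}-style unit laws, which may first have to be derived for identities.

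For $w=(w_1\fcomp n w_2)$ with $w_1:x\to y$ and $w_2:y\to z$, we take $w'=(w_2'\fcomp n w_1')$. Associativity \ref{evi} and congruence then give
\[
  (w\fcomp n w') \freq (w_1\fcomp n((w_2\fcomp n w_2')\fcomp n w_1')) \freq (w_1\fcomp n(\fid y\fcomp n w_1')).
\]
This reduces the problem to a \emph{left unit law} $(\fid y\fcomp n u)\freq u$ for arbitrary well-typed $u:y\to x$. This unit law is the key auxiliary lemma, and we expect it to be the main obstacle, since the moves \ref{eii}--\ref{ev} state unit laws only for generators. We prove it, together with the right unit law, by a separate structural induction on $u$:
\begin{itemize}
\item For generators and inverses of generators, the unit laws are \ref{eii} and \ref{eiii}.
\item For $u=\fid x$, the unit laws follow from \ref{ei}.
\item For $u=(u_1\fcomp n u_2)$, they follow from \ref{evi}.
\item For $u=(u_1\fcomp i u_2)$ with $i<n$, we write $\fid y=\fid{y_1\comp i y_2}\freq(\fid{y_1}\fcomp i\fid{y_2})$ using \ref{ei} backwards. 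The exchange move \ref{evii} then turns $(\fid y\fcomp n u)$ into $((\fid{y_1}\fcomp n u_1)\fcomp i(\fid{y_2}\fcomp n u_2))$, and we conclude by induction.
\end{itemize}
The analogous whiskering unit laws of \ref{eiv}--\ref{ev} are not needed for this argument.

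For $w=(w_1\fcomp i w_2)$ with $i<n$, $w_1:x\to y$ and $w_2:x'\to y'$, we take $w'=(w_1'\fcomp i w_2')$, which has type $y\comp i y'\to x\comp i x'$. The exchange move \ref{evii} gives
\[
  (w\fcomp n w')\freq((w_1\fcomp n w_1')\fcomp i(w_2\fcomp n w_2')).
\]
By induction and congruence, this is $\freq(\fid x\fcomp i\fid{x'})$, which \ref{ei} reduces to $\fid{x\comp i x'}$. The other composite $(w'\fcomp n w)\freq\fid{y\comp i y'}$ is handled symmetrically. Along the way we check that the composability side conditions hold, so that every expression written above is well-typed.
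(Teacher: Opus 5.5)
Your proposal is correct and follows the paper's proof: it uses the same structural induction on $w$ with the same choice of $w'$ in each of the five cases. Your auxiliary induction proving $(\fid y\fcomp n u)\freq u$ for arbitrary well-typed $u$ makes explicit the general unit law that the paper uses silently in the case $w=(w_1\fcomp n w_2)$.

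On your hedge, \ref{ei} must be read as including $i=n$ (with $x\comp n x=x$). The paper's own base case $w'=w=\fid u$ needs exactly $(\fid u\fcomp n\fid u)\freq\fid u$, and without that instance the lemma is false: for $n=0$ and $S=\emptyset$ only associativity applies, so $(\fid x\fcomp 0 w')$ is never equivalent to $\fid x$. So drop the fallback and commit to that reading.
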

  \begin{proof}
    We reason by induction on the complexity of $w$:
    \begin{itemize}
    \item if $w=\fid u$ for some $u\in C_n$, then $x=y=u$ and we set
      $w'=w$;
    \item if $w=\fcst a$ for some $a\in S$, we set $w'=\finv a$;
    \item if $w=\finv a$ for some $a\in S$, we set $w'=\fcst a$;
     \item if $w=(w_1\fcomp n w_2)$, then $\type{w_1}xz$ and
       $\type{w_2}zy$ and by induction we get $\type{w'_1}zx$ and
       $\type{w'_2}yz$ such that $(w_1\fcomp n w'_1)\freq \fid x$,
       $(w'_1\fcomp n w_1)\freq \fid z$, $(w_2\fcomp n w'_2)\freq \fid
       z$
       and $(w'_2\fcomp n w_2)\freq \fid y$. Thus $w'=(w'_2\fcomp n
       w'_1)$ is well-typed and satisfies $(w\fcomp n w')\freq \fid x$
       and $(w'\fcomp n w)\freq \fid y$;
       \item if $w=(w_1\fcomp i w_2)$ for some $i<n$, then
         $\type{w_1}{x_1}{y_1}$
         and $\type{w_2}{x_2}{y_2}$ with $x=x_1\comp i x_2$ and
         $y=y_1\comp i y_2$. By induction, we get
         $\type{w'_1}{y_1}{x_1}$ and $\type{w'_2}{y_2}{x_2}$ such that
         $(w_l\fcomp n w'_l)\freq \fid{x_l}$ and $(w'_l\fcomp n
         w_l)\freq \fid{y_l}$ for $l=1,2$. Thus $w'=(w'_1\fcomp i
         w'_2)$ is well-typed and satisfies $(w\fcomp n w')\freq \fid x$
       and $(w'\fcomp n w)\freq \fid y$.
    \end{itemize}
  \end{proof}
  Let us define the $(n{+}1)$-globular set $\extend C$ by
  $\trk n\extend C=C$ and \[\sce n,\tge n:\extend C_{n+1}=\wtxeq S\to C_n.\]
  The axioms for $\frel$ ensure that $\extend C$, endowed with
  the $\comp i$-compositions and units just defined is an
  $(n{+}1)$-category, and by~\cref{lemma:inv}, all its $(n{+}1)$-cells
  are invertible, so that $\extend C$ is
  indeed an
  $\pair{n{+}1}{k}$-category.

  We claim that $\extend C$ so defined is
  in fact, up to isomorphism, the expected $\pair{n{+}1}{k}$-category
  $\frf{n,k}\clx CS$ freely generated by the cellular extension $\clx CS$.
  This amounts to prove that $\extend C$ satisfies the following universal property.
  
\begin{proposition}\label{prop:univprop}
   Let $D$ be an  $\pair{n{+}1}{k}$-category, $f:C\to\trk n D$ be a
   morphism of $\pair nk$-categories,
   and  $h:S\to D_{n+1}$ be a map such that, for any $a\in S$,
   \[
     \sce n^Dh(a)=f_n\sce n^S (a)
     \quad\text{and}\quad \tge n^D h(a)=f_n\tge n^S (a).
     \]
   There is a unique
    morphism $\phi:\extend C\to D$ in $\npCat{n+1}{k}$ such that $\trk
    n\phi=f$ and for each $a\in S$, $\phi_{n+1}(\fcls{\fcst a})=h(a)$.
  \end{proposition}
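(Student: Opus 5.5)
The plan is to build $\phi$ from the $f$-compatible extension of \cref{lemma:mapext} and show that it descends to the quotient $\wtxeq S=\wtx S/\!\!\freq$.

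\emph{Existence.} By \cref{lemma:mapext}, the map $h$ extends uniquely to an $f$-compatible map $\extend h:\wtx S\to D_{n+1}$ with $\extend h\finc S=h$. The key step is to prove that $\extend h$ is constant on $\freq$-classes, that is, $w\freq w'$ implies $\extend h(w)=\extend h(w')$. Since $\freq$ is the equivalence relation generated by $\cfrel$, it suffices to treat $w\cfrel w'$. I would argue by induction on the derivation of $w\cfrel w'$.

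\emph{Contextual cases (ii) and (iii).} Here $f$-compatibility gives $\extend h((x\fcomp i u))=\extend h(x)\comp i\extend h(u)$. The induction hypothesis $\extend h(u)=\extend h(u')$ then concludes directly.

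\emph{Base case $w\frel w'$.} I check each axiom in turn.
\begin{itemize}
\item For \ref{ei}, I use that $f_n$ preserves $\comp i$ and that units in $D$ are compatible with composition, so $\unit{n+1}(f_nx)\comp i\unit{n+1}(f_nx')=\unit{n+1}(f_n(x\comp i x'))$.
\item For \ref{eii}--\ref{ev}, I use the unit laws in $D$. This needs $f_n(\unit n(\sce i x))=\unit n(\sce i f_n x)$, which holds because $f$ is a morphism.
\item For \ref{evi} and \ref{evii}, I use associativity and the exchange rule in $D$.
\item For \ref{eviii}, I use that $\extend h(\finv a)$ is the inverse of $h(a)$.
\end{itemize}
The main obstacle is only bookkeeping: in each case the relevant composites must be defined in $D$, which follows from the typing of $w$ and the source and target equations of $f$-compatibility.

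\emph{Definition and morphism check.} With the key step in hand, $\extend h$ factors as $\phi_{n+1}\circ\cans S$. I set $\trk n\phi=f$. Then $\phi_{n+1}$ respects sources and targets by the first clause of $f$-compatibility. It respects compositions because $\fcls w\comp i\fcls{w'}=\fcls{(w\fcomp i w')}$. It respects units because $\fcls{\fid x}\mapsto\unit{n+1}(f_nx)$. Hence $\phi$ is a morphism of $(n{+}1)$-categories, and therefore of $\pair{n{+}1}k$-categories, since $\npCat{n+1}{k}$ is a full subcategory. Finally, $\phi_{n+1}(\fcls{\fcst a})=h(a)$ holds by construction.

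\emph{Uniqueness.} Let $\psi$ be another morphism with the same properties. The composite $\psi_{n+1}\circ\cans S$ is $f$-compatible: it respects types, units and compositions, and it sends $\finv a$ to the inverse of $h(a)$. The last point holds because $\fcls{\finv a}$ is the inverse of $\fcls{\fcst a}$ in $\extend C$ by \ref{eviii}, and morphisms preserve inverses. Moreover $\psi_{n+1}\circ\cans S$ agrees with $h$ on generators. So the uniqueness part of \cref{lemma:mapext} gives $\psi_{n+1}\circ\cans S=\extend h$. Since $\cans S$ is surjective, $\psi=\phi$.
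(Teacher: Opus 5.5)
Your proposal is correct and follows the paper's proof step for step. For existence, you check that the $f$-compatible extension $\extend h$ of \cref{lemma:mapext} is invariant under each elementary move and so factors through $\cans S$; for uniqueness, you combine the uniqueness clause of \cref{lemma:mapext} with the surjectivity of $\cans S$. Your explicit induction over the contextual closure $\cfrel$ and your justification that $\psi_{n+1}\cans S$ sends $\finv a$ to $\inv{h(a)}$ simply fill in details that the paper leaves implicit.
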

  \begin{proof}
We first prove uniqueness. As $\phi$ is supposed to coincide with $f$ on
dimensions $\leq n$, we only need to prove uniqueness for its
$(n{+}1)$-component
\[\phi_{n+1}:\wtxeq S\to D_{n+1}.\]
 Thus, suppose
that $\phi$ satisfies the conditions of the proposition and let
\[\psi=\phi_{n+1} \cans{S}:\wtx S\to D_{n+1}.\]
   Because $\phi$ is a morphism, the  map $\psi$ is
   $f$-compatible, and by hypothesis \[\psi(\fcst
   a)=\phi_{n+1}\cans S(\fcst a)=h(a).\] Therefore, by the uniqueness
   part of
   \cref{lemma:mapext}, $\psi=\extend h$. Hence $\psi$ is uniquely determined.
As $\cans S$  is surjective,
  $\phi_{n+1}$ is also uniquely determined and so is $\phi$.

  As for the existence, by \cref{lemma:mapext}, there is an
  $f$-compatible map
  \[\psi:\wtx S\to D_{n+1}\]
  such that $\psi(\fcst a)=h(a)$ for each $a\in S$. 
  In each case~\ref{ei} to \ref{eviii} one checks that $w\frel w'$
  implies $\psi(w)=\psi(w')$. Therefore, if $w\freq w'$, then
  $\psi(w)=\psi(w')$, whence $\psi$ factors through $\cans S$, that
  is, there is a map $\phi_{n+1}:\wtxeq S\to D_{n+1}$ such that
  $\psi=\phi_{n+1}\cans S$. This map preserves source and target,
  identities and $\comp i$-compositions and, by definition, $\phi_{n+1}(\fcls
  {\fcst{a}})=h(a)$, so that it extends $f$ to the required morphism $\phi$. 
\end{proof}
\begin{remark}\label{rmk:functoriality_of_wtxeq}
  The universal property of~\cref{prop:univprop} allows us to make the correspondence
  \[\clx CS\mapsto \wtxeq S\]
  into a functor $\clWW:\npCatp
  nk\to\Set$. Precisely, this functor $\clWW$ is the composite
  \[
    \xymatrix{\npCatp nk\ar[r]^{\frf{n,k}} & \npCat{n+1}k \ar[r]^(.6){(-)_{n+1}}& \Set}
  \]
  where $(-)_{n+1}$ takes an $\pair{n{+}1}k$-category to the set of
  its $(n{+}1)$-cells.

  Let now $\phi=\pair fh:\clx CS\to \clx DT$
  be a morphism of cellular extensions, and define $h':S\to\wtxeq T$
  by $h'(a)=\fcls{\fcst{h(a)}}$. By~\cref{prop:univprop},
  \[\extend\phi=\frf{n,k}\phi:\frf{n,k}\clx CS\to\frf{n,k}\clx DT\] is
  the unique morphism in $\npCat{n+1}k$ such that $\trk
  n\extend\phi=f$ and $\extend{\phi}_{n+1}(\fcls{\fcst a})=h'(a)$ for
  each $a\in S$. Now
  $\extend{\phi}_{n+1}$ is precisely $\wtxeq{\phi}$, so that, for each $a\in
  S$, $\wtxeq{\phi}(\fcls{\fcst a})=\fcls{\fcst{h(a)}}$.
\end{remark}

\begin{paragr}\label{paragr:addproperties}
  Let $C$ be an $\pair{n{+}1}{k}$-category and $S$ be a set. Any map
  $p:S\to C_{n+1}$ immediately defines a cellular extension $\clx{\trk n C}{S}$ of
  $\trk n C$ by $S$, with $\sce n^S=\sce n^C\circ p$ and $\tge
  n^S=\tge n^C\circ p$. Thus
\[\pair{\Unit{\trk n C}}{p}:\clx{\trk n C}{S}\to\fgf{n,k}C\]
is a morphism of cellular extensions.
Let $\phi:\frf{n,k}\clx{\trk n C}{S}\to C$ be the transpose of $\pair{\Unit{\trk n C}}{p}$ under the adjunction.
By~\cref{prop:univprop}, $\extend{\trk n C}$ satisfies the universal property
of
$\frf{n,k}\clx{\trk n C}S$. In particular, it is canonically isomorphic to
$\frf{n,k}\clx{\trk n C}S$, and its set of $(n{+}1)$-cells is $\wtxeq S$.
 Let $\fcls
p=\phi_{n+1}:\wtxeq S\to C_{n+1}$ and $\extend p=\fcls p\cans S:\wtx
S\to C_{n+1}$. By~\cref{lemma:mapext}, the map $\extend p$ is the
unique $\Unit{\trk nC}$-compatible map such that $\extend p\finc
S=p$. Therefore, each $p:S\to C_{n+1}$ yields a factorization of the
form
\begin{equation}
  \label{eq:factor}
  \xymatrix{C_{n+1} & \wtxeq S \ar[l]^{\fcls{p}}& \wtx S\ar[l]^(.4){\cans{S}} \ar@/_1em/[ll]_(.3){\extend{p}}& S\ar[l]^(.3){\finc{S}}\ar@/_2em/[lll]_p}
\end{equation}
\end{paragr}

\begin{lemma}\label{lemma:main_diagram}
    Let $C$, $D$ be $\pair{n{+}1}{k}$-categories and $f:C\to D$ be a
    morphism. Let $S$, $T$ be two sets endowed with maps $p:S\to
    C_{n+1}$ and $q:T\to D_{n+1}$. Let $h:S\to T$ be a map such that
    $f_{n+1}p=qh$. Then
    \[ \phi=\pair{\trk n f}{h}:\clx{\trk n
      C}S\to\clx{\trk n D}{T}\]
is a morphism of cellular extensions
    and the following diagram commutes:
    \begin{equation}
      \label{eq:main_diagram}
      \begin{tikzcd}
       C_{n+1} \ar[d,"f_{n+1}"']& \wtxeq S\ar[l,"\fcls{p}"]\ar[d,"\wtxeq\phi"'] & \wtx S
      \ar[l,"\cans{S}"]\ar[d,"\wtx\phi"] & S\ar[l,"\finc{S}"]\ar[d,"h"]
      \ar[lll,bend right=15,"p"']\\
      D_{n+1} & \wtxeq T\ar[l,"\fcls{q}"'] & \wtx T
      \ar[l,"\cans{T}"'] & T.\ar[l,"\finc{T}"'] \ar[lll,bend left=15,"q"]
      \end{tikzcd}
     \end{equation}
   \end{lemma}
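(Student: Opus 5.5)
We first check that $\phi=\pair{\trk n f}{h}$ is a morphism of cellular extensions. For $a\in S$ we have $\sce n^S(a)=\sce n^C p(a)$, hence
\[\sce n^T(h(a))=\sce n^D q(h(a))=\sce n^D f_{n+1}p(a)=f_n\sce n^C p(a)=f_n\sce n^S(a),\]
using $f_{n+1}p=qh$ and the fact that $f$ commutes with sources. The same computation works for targets. So $\phi$ is a morphism in $\npCatp nk$, and \cref{lemma:transl} and \cref{rmk:functoriality_of_wtxeq} give $\wtx\phi$ and $\wtxeq\phi$.

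Next we check the diagram~\eqref{eq:main_diagram} square by square, from right to left. The outer curved triangles are the factorizations~\eqref{eq:factor} of \cref{paragr:addproperties} for $p$ and $q$. In the rightmost square, $\wtx\phi\,\finc S(a)=\wtx\phi(\fcst a)=\fcst{h(a)}=\finc T h(a)$, by the definition of a $\phi$-compatible translation. For the middle square, we show that $\cans T\wtx\phi$ is constant on $\freq$-classes, so it factors through $\cans S$; the induced map should be $\wtxeq\phi$. The slickest route avoids checking each move \ref{ei}--\ref{eviii} by hand. Consider the map
\[\cans T\wtx\phi:\wtx S\to\wtxeq T=(\frf{n,k}\clx{\trk nD}{T})_{n+1}.\]
It is $\trk n f$-compatible: compositions and identities are preserved because $\wtx\phi$ commutes with the formal operations and $\cans T$ turns these into the actual operations; $\finv a$ goes to the inverse of $\fcls{\fcst{h(a)}}$ by \ref{eviii}; and types are respected. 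Its value on $\fcst a$ is $\fcls{\fcst{h(a)}}$. The map $\wtxeq\phi\,\cans S$ is also $\trk nf$-compatible, since $\wtxeq\phi=\extend\phi_{n+1}$ comes from a morphism of $\pair{n{+}1}k$-categories, and by \cref{rmk:functoriality_of_wtxeq} it takes the same value on $\fcst a$. The uniqueness part of \cref{lemma:mapext} then gives $\cans T\wtx\phi=\wtxeq\phi\,\cans S$.

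The left square is handled the same way. Both $f_{n+1}\fcls p\,\cans S=f_{n+1}\extend p$ and $\fcls q\,\wtxeq\phi\,\cans S=\fcls q\,\cans T\,\wtx\phi=\extend q\,\wtx\phi$ are $\trk n f$-compatible maps $\wtx S\to D_{n+1}$. The first is compatible because $\extend p$ is $\Unit{\trk nC}$-compatible and $f$ is a morphism preserving inverses. The second is compatible because $\extend q$ is $\Unit{\trk nD}$-compatible and $\wtx\phi$ is a $\phi$-compatible translation. On $\fcst a$ both maps give $f_{n+1}p(a)=qh(a)$. By \cref{lemma:mapext} they agree, and since $\cans S$ is surjective, $f_{n+1}\fcls p=\fcls q\,\wtxeq\phi$.

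The main obstacle is the bookkeeping in verifying the compatibility conditions, especially the typing condition for the composite of a translation with an $f$-compatible map. Each verification is a short structural check, and these are the only places where real care is needed.
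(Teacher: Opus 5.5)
Your proposal is correct and follows the paper's proof essentially step by step. The right square follows from the definition of a $\phi$-compatible translation. For the middle and left squares you show that both composites are $\trk n f$-compatible and agree on each $\fcst a$, so they coincide by the uniqueness part of \cref{lemma:mapext}; the left square also uses surjectivity of $\cans S$. Your explicit check that $\phi$ is a morphism of cellular extensions is a detail the paper leaves implicit.
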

   \begin{proof}
     The outer square commutes by hypothesis, and the top and bottom
     factorizations are instances
     of~(\ref{eq:factor}). By~\cref{lemma:transl}, $\wtx\phi$ is a
     $\phi$-compatible translation and in particular, for each $a\in
     S$, $\wtx\phi(\fcst a)=\fcst{h(a)}$ so that the right square
     commutes.

     For the middle square, set $h'=\cans T\finc Th:S\to\wtxeq T$. Both maps
     $\wtxeq\phi\cans S$ and $\cans T\wtx\phi$ are
     $f$-compatible. Moreover, for each $a\in S$,
\[\wtxeq\phi\cans S\finc S(a)=\wtxeq\phi(\fcls{\fcst a})=\fcls{\fcst{h(a)}}=h'(a)\]
by~\cref{rmk:functoriality_of_wtxeq} and
\[
\cans
     T\wtx\phi\finc S(a)=\cans T\finc T h(a)=h'(a).
  \]
Thus, by the
     uniqueness part of~\cref{lemma:mapext}, $\wtxeq\phi\cans S=\cans
     T\wtx\phi$, so that the middle square commutes.

     For the left square, let
     $h''=qh$. Both maps $f_{n+1}\fcls p\cans S$ and $\fcls
     q\wtxeq\phi\cans S$ from $\wtx S$ to $D_{n+1}$ are $f$-compatible. Also
     \[
       f_{n+1}\fcls p\cans S\finc S=f_{n+1}p=qh=h''
     \]
      and
      \[
      \fcls q\wtxeq\phi\cans S\finc S=\fcls q\cans T\wtx\phi\finc
      S=\fcls q\cans T\finc T h=qh=h''.
      \]
      By the uniqueness part
     of~\cref{lemma:mapext},
     $f_{n+1}\fcls p\cans S=\fcls q\wtxeq\phi\cans S$ and as $\cans S$
     is surjective, $f_{n+1}\fcls p=\fcls q\wtxeq\phi$. Therefore the
     left square commutes and we are done.
   \end{proof}


%% file: conduche.tex
\section{Basis lifting theorem}\label{sec:basislift}

\subsection{Discrete Conduché fibrations}\label{subsec:conduche}
\begin{definition}\label{def:conduche}
   Let $0 \leq k\leq \oo$ and $C$, $D$ be \ook k-categories. A morphism $f:C\to D$ is a {\em discrete Conduché fibration} if it satisfies the following conditions:
  \begin{description}
  \item[\cndi] For $0\leq n<m$ and  $y\in D_n$,  $x\in C_m$ such that $f(x)=\unit
    m(y)$, there is a unique $z\in C_n$ such that $x=\unit m(z)$.
    \item[\cndii] For $0\leq i<n$, if $y_1$, $y_2$ are $i$-composable cells in
      $D_n$ and $x\in C_n$ is such that $f(x)=y_1\comp i y_2$, there is a unique pair $\pair{x_1}{x_2}$ of $i$-composable cells in $C_n$ such that $x=x_1\comp i x_2$, $f(x_1)=y_1$ and $f(x_2)=y_2$.
    \end{description}
  \end{definition}
 \begin{paragr}\label{paragr:properties_conduche}
 It can be proved that the first condition logically
    follows from the second one, but it will be more convenient to treat
    them as separate conditions in the forthcoming arguments. Note that, as~\cndi\ 
    and~\cndii\ can be expressed as right-lifting conditions, 
    discrete Conduché fibrations are stable under pullback. Of
    course~\cndi\ and~\cndii\ make sense in $\npCat nk$ for any
    $n\in\N$ and define the corresponding notion of discrete Conduché
    fibration in all $n$-truncations of $\opCat k$. 
    Remark that $f:C\to D$
    is a discrete Conduché fibration in $\npCat nk$ if and only if
    $\oinc n f:\oinc n C\to\oinc n D$ is one in $\opCat k$. In
    particular, for $n=k=1$, we recover the original notion of
    discrete Conduché fibration among small
    categories~\cite{conduche:exisad}.  
  \end{paragr}
  \begin{paragr} \label{paragr:lift_conduche}
In the case where $C$, $D$ are plain \oo-categories, that is if
$k=\omega$, we know from~\cite[Theorem 6.11]{guetta:poldcf} that if
$f:C\to D$ is a discrete Conduché fibration and $D$ is freely
generated by a polygraph, then so is $C$. We aim to prove that the
result still holds for any $k\in\N$. Now a key feature of a free \oo-category is that its generators are
     exactly the indecomposable cells, hence uniquely determined
     (see~\cite[Section 4, Proposition 8.3]{makkai:worcom}). This is
     no longer true for \ook k-categories when $k<\omega$, which makes the general case a
     priori significantly more complicated. However, the proof
     of~\cite[Theorem 6.11]{guetta:poldcf} does not rely on the
     uniqueness of the basis, and adapts to the general case.
  \end{paragr}

\subsection{Lifting properties of discrete Conduché fibrations}\label{subsec:lifting}
\begin{paragr}\label{paragr:generators}
 Let us fix an integer $k\in\N$.  Let $n\geq k$, $C$, $D$ be two $\pair{n{+}1}k$-categories, and $f:C\to D$ be a
 morphism. We suppose that there exists a set $T\subset D_{n+1}$ such
  that $D$ is isomorphic to $\frf{n,k}\clx{\trk nD}{T}$.
Let $q:T\to D_{n+1}$ be the inclusion of
  generators and define $S\subset C_{n+1}$ by
  \[
 S=f_{n+1}^{-1}(q(T))=\setof{a\in C_{n+1}}{f_{n+1}(a)\in q(T)}.
\]
We denote by $p:S\to C_{n+1}$ the inclusion map.
  Let $h:S\to T$ be the map taking $a\in S$ to the unique $b\in T$ such
  that $f_{n+1}p(a)=q(b)$. By definition, $f_{n+1}p=qh$. Therefore
  $C$, $D$, $f$, $p:S\to C_{n+1}$ and $q:T\to D_{n+1}$ satisfy the
  hypotheses of~\cref{lemma:main_diagram} and we get a commutative
  diagram of shape~(\ref{eq:main_diagram}). Moreover, as $D$ is
  isomorphic to $\frf{n,k}\clx{\trk nD}{T}$,
  the map $\fcls q:\wtxeq{T}\to D_{n+1}$ is a
  bijection.
  \begin{paragr}\label{paragr:hypotheses}
    From now on, we take $f:C\to D$, $S$ and $T$ as
    in~\ref{paragr:generators} and keep the notations
    of~\ref{paragr:addproperties}
    and~\cref{lemma:main_diagram}. Moreover, in all the forthcoming
    lemmas~\ref{lemma:surj} to~\ref{lemma:inj}, we suppose that $f$ is a discrete Conduché fibration.
  \end{paragr}
\end{paragr}
\begin{lemma}\label{lemma:surj}
 The map $\fcls p:\wtxeq S\to
 C_{n+1}$ is surjective.
\end{lemma}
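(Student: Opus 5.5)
We need to show that every $(n{+}1)$-cell $c\in C_{n+1}$ lies in the image of $\fcls p$, that is, $c=\extend p(w)$ for some $w\in\wtx S$. The plan is to lift a formula representing $f_{n+1}(c)$ in $D$ back along $f$, using the Conduché conditions~\cndi\ and~\cndii\ to decompose $c$ along the structure of that formula.

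First, since $\fcls q$ is a bijection and $\cans T$ is surjective, we can choose $v\in\wtx T$ with $\extend q(v)=f_{n+1}(c)$. We then prove, by structural induction on $v\in\wtx T$, the following statement: for every $c\in C_{n+1}$ with $f_{n+1}(c)=\extend q(v)$, there is $w\in\wtx S$ with $\extend p(w)=c$ (and even $\wtx\phi(w)=v$, though we do not need this). The cases are as follows.

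\begin{itemize}
\item If $v=\fid y$ with $y\in D_n$, then $f_{n+1}(c)=\unit{n+1}(y)$. By~\cndi\ we get $c=\unit{n+1}(z)$ for some $z\in C_n$, and we take $w=\fid z$.
\item If $v=\fcst b$ with $b\in T$, then $f_{n+1}(c)=q(b)$, so $c\in S$ by the definition of $S$, and we take $w=\fcst c$.
\item If $v=(v_1\fcomp i v_2)$, then $f_{n+1}(c)=\extend q(v_1)\comp i\extend q(v_2)$, because $\extend q$ is compatible with compositions. Condition~\cndii\ gives an $i$-composable pair $c=c_1\comp i c_2$ with $f_{n+1}(c_j)=\extend q(v_j)$. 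Induction gives $w_1,w_2$, and $w=(w_1\fcomp i w_2)$ is well-typed, since the types of the $w_j$ are determined by the sources and targets of the $c_j$ in $C_n$.
\end{itemize}

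The main obstacle is the case $v=\finv b$. Here $f_{n+1}(c)=\inv{q(b)}$, and neither condition directly identifies $c$ with an inverse of an element of $S$. The plan is to use invertibility in $C$: the cell $c$ is invertible, since $C$ is an $\pair{n{+}1}k$-category with $n\ge k$. Its inverse $c'$ satisfies $f_{n+1}(c')=q(b)$, because morphisms preserve inverses. Hence $c'\in S$, and we take $w=\finv{c'}$, which gives $\extend p(w)=\inv{c'}=c$ by $\Unit{\trk nC}$-compatibility.

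Finally, applying the statement to the chosen $v$ gives $c=\extend p(w)=\fcls p(\cans S(w))$, which proves surjectivity. One should also check that the types match in the composition case. This follows because $\sce n(c_j)$ and $\tge n(c_j)$ are exactly the source and target of $w_j$, as $\extend p$ preserves types over the identity on $\trk nC$.
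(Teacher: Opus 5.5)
Your proof is correct and follows the paper's case analysis: \cndi\ for formal identities, membership in $S=f_{n+1}^{-1}(q(T))$ for constants, invertibility of $c$ (so that $\inv c\in S$) for formal inverses, and \cndii\ for composites. The only difference is in how the induction is organized: the paper fixes a minimal-length representative $v_d$ for each $d$ and inducts on its length to build an explicit section $c\mapsto u_c$ of $\extend p$, whereas your structural induction on an arbitrary $v$, with $c$ universally quantified, needs no such choice and is slightly cleaner (it also makes the paper's appeal to minimality in the inverse case unnecessary).
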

\begin{proof}
  By hypothesis, $\fcls q$ is bijective and we know that $\cans T$ is
surjective. Therefore $\extend q=\fcls q\cans T$ is surjective.  Thus,
for each $d\in D_{n+1}$, we choose a formula  $v_d\in \wtx T$ of
{\em minimal length} such
that $\extend q(v_d)=d$. Note that by construction $\scex
n^Tv_d=\sce n^D d$ and $\tgex n^Tv_d=\tge n^D d$.
We now show how to associate to each $c\in C_{n+1}$ a formula
$u_c\in\wtx S$ such that $\scex n^S u_c=\sce
n^C c$, $\tgex n^S u_c=\tge n^C c$ and $\extend p(u_c)=c$. The
correspondence $c\mapsto u_c$ is defined by structural induction on
the complexity of $v_d$ where $d=f_{n+1}(c)$:
\begin{itemize}
\item if $v_d=\fid y$ for some $y\in D_n$, then $d=\extend q(\fid
  y)=\unit{n+1}(y)$ and by~\cndi\
  there is a unique $x\in C_n$ such that $c=\unit{n+1}(x)$ and $f_n(x)=y$, in
  which case we choose  $u_c=\fid x$, whence $\scex n^Su_c=x=\sce n^Cc$
  and $\tgex n^Su_c=x=\tge n^Cc$;   
\item if $v_d=\fcst b$ for $b\in T$, then $d=q(b)$ and as  $c\in
  f_{n+1}^{-1}(d)$, $c\in S$, so that $c=p(c)=\extend p(\fcst c)$ and we
  choose $u_c=\fcst c$, whence $\scex n^Su_c=\sce n^Cc$
  and $\tgex n^Su_c=\tge n^Cc$; 
 \item if $v_d=\finv b$ for $b\in T$, then $\inv d=\inv{\extend q(\finv b)}=\extend q(\fcst b)$, which
   implies $v_{\inv d}=\fcst b$ because $\fcst b$ is the unique
   formula of minimal length whose image by $\extend q$ is $\inv
   d$. Therefore $\inv d=q(b)\in q(T)$ and as $f_{n+1}(\inv
   c)=\inv{(f_{n+1}(c))}=\inv d$, we get $\inv c=a\in S$. We then
   choose $u_c=\finv a$, whence
   $\scex n^Su_c=\tge n^C\inv c=\sce n^Cc$,
   $\tgex n^Su_c=\sce n^C\inv c=\tge n^Cc$ and $\extend
   p(u_c)=\inv{(\inv c)}=c$;
 \item if $v_d$ is of the form $(v_1\fcomp i v_2)$ for $i\leq n$,
   $v_1,v_2\in \wtx T$, then $d_1=\extend q(v_1)$ and $d_2=\extend q
   (v_2)$ are $i$-composable and $d=d_1\comp i d_2$. By~\cndii, there is a unique pair $c_1$, $c_2$ of
   $i$-composable cells in $C_{n+1}$ such that $f_{n+1}(c_1)=d_1$,
   $f_{n+1}(c_2)=d_2$ and $c=c_1\comp i c_2$. 
By the minimal length condition $\lgh(v_{d_1})~\leq~\lgh(v_1) $ and $\lgh(v_{d_2})\leq \lgh(v_2)$, so that
   $\lgh(v_{d_1})<\lgh(v_d)$ and $\lgh(v_{d_2})<\lgh(v_d)$ and the
   induction hypothesis applies. Therefore we get
   $u_{c_1},u_{c_2}\in\wtx S$ such that $\extend p(u_{c_1})=c_1$ and
   $\extend p(u_{c_2})=c_2$. By induction hypothesis, the source and
   target of $u_{c_1}$ and $u_{c_2}$ are those of $c_1$ and $c_2$
   respectively. Therefore $\scex i^Su_{c_2}=\sce i^Cc_2=\tge
   i^Cc_1=\tgex i^Su_{c_1}$, so that the formula $(u_{c_1}\fcomp i
   u_{c_2})$ is well typed and we may define $u_c=(u_{c_1}\fcomp i
   u_{c_2})$. By definition, $\extend p(u_c)=c$. Finally we easily check
   that $\scex n^Su_c=\sce n^Cc$ and $\tgex n^Su_c=\tge n^C c$ in both
   cases $i=n$ and $i<n$. 
 \end{itemize}
 By constructing a section $c\mapsto u_c$ of the map $\extend p$ we
 have shown that $\extend p$ is surjective. As $\extend p=\fcls p\cans
 S$, this implies that $\fcls p$ is also surjective.
\end{proof}
\begin{paragr}\label{paragr:restricted_formulas}
  Let $C$, $S$, $p:S\to C_{n+1}$ be as in~\ref{paragr:generators}. For
  each $c\in C_{n+1}$ we define
  \[\wtxr cS=\setof{w\in\wtx S}{\extend p(w)=c}.\]
  Now, in the hypotheses of~\cref{lemma:main_diagram}, for each $c\in
  C_{n+1}$ and $d=f_{n+1}(c)$, if $w\in\wtxr cS$, then $\extend
  q\wtx\phi(w)=f_{n+1}\extend p(w)=f_{n+1}(c)=d$, so that
  $\wtx\phi(w)\in\wtxr dT$. We may therefore define
  \[\wtxr c\phi:\wtxr cS\to \wtxr dT\]
  as the restriction of $\wtx\phi$ to $\wtxr cS$. 
\end{paragr}
\begin{lemma}\label{lemma:injectivity}
  For all $c\in C_{n+1}$, the map
  $\wtxr c\phi$ is injective.
\end{lemma}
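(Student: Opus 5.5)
We prove, by structural induction on $w\in\wtxr cS$ and for all $c$ simultaneously, that for every $w'\in\wtxr cS$ with $\wtx\phi(w')=\wtx\phi(w)$ we have $w'=w$. The key facts are that $\wtx\phi$ is a $\phi$-compatible translation (\cref{lemma:transl}), so it preserves the outermost constructor of a formula, and that $f$ satisfies \cndi\ and \cndii. Write $v=\wtx\phi(w)=\wtx\phi(w')$ and $d=f_{n+1}(c)$.

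\emph{Base cases.} Suppose $w=\fid x$. Then $v=\fid{f_n(x)}$. Since translation sends $\fcst a,\finv a,\fid x$ and $\fcomp i$-composites to formulas of the same shape, $w'$ must also be of the form $\fid{x'}$ with $f_n(x')=f_n(x)$. Moreover $\extend p(w)=\extend p(w')=c$ gives $\unit{n+1}(x)=\unit{n+1}(x')=c$, and units are injective, so $x=x'$. (One could equally invoke the uniqueness in \cndi.)

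Suppose $w=\fcst a$ with $a\in S$. Then $w'=\fcst{a'}$ with $h(a)=h(a')$. Since $p$ is an inclusion, $a=\extend p(w)=c=\extend p(w')=a'$. The case $w=\finv a$ is identical: here $\inv a=c=\inv{a'}$, hence $a=a'$.

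\emph{Inductive step.} Suppose $w=(w_1\fcomp i w_2)$. Then $w'=(w'_1\fcomp i w'_2)$ with the same $i$, because the translation preserves the composition index and the formal language has unique parsing. We then have
\[
  \wtx\phi(w_1)=\wtx\phi(w'_1)=v_1,\qquad \wtx\phi(w_2)=\wtx\phi(w'_2)=v_2 .
\]
Set $c_j=\extend p(w_j)$ and $c'_j=\extend p(w'_j)$. Since $\extend p$ is $\Unit{\trk nC}$-compatible,
\[
  c=c_1\comp i c_2=c'_1\comp i c'_2 .
\]
By \cref{lemma:main_diagram} and the factorization (\ref{eq:factor}), $f_{n+1}(c_j)=\extend q(v_j)=f_{n+1}(c'_j)$. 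Call this common value $d_j$, so that $d=d_1\comp i d_2$. The uniqueness clause of \cndii, applied to $c$ over the decomposition $d_1\comp i d_2$, yields $c_1=c'_1$ and $c_2=c'_2$. Therefore $w_j,w'_j\in\wtxr{c_j}S$ have equal images under $\wtxr{c_j}\phi$, and the induction hypothesis gives $w_j=w'_j$, whence $w=w'$.

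\emph{Main obstacle.} The inductive step is the delicate point, and it is exactly why the statement is phrased per fibre $\wtxr cS$ rather than for $\wtx\phi$ globally. The map $\wtx\phi$ itself need not be injective, since distinct cells of $C$ can lie over the same cell of $D$. The argument works only because fixing $c$ lets \cndii\ pin down the images of the subformulas, after which the induction hypothesis applies on the smaller fibres. One must also check that the composability of $c_1,c_2$ and of $c'_1,c'_2$ in $C_{n+1}$ comes from well-typedness, as the hypotheses of \cndii\ require.
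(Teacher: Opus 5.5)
Your proof is correct and follows essentially the same argument as the paper. The paper inducts on the length $\lgh(w)$ (using that $\wtx\phi$ preserves length) rather than on the structure of $w$. Otherwise the two arguments coincide: the same three base cases, and the same use of the uniqueness clause of \cndii\ to pin down $\extend p$ of the subformulas before applying the induction hypothesis on the smaller fibres. Your $\finv a$ case, which reads off $\inv a = c = \inv{a'}$ directly, is slightly more direct than the paper's reduction to the $\fcst a$ case.
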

\begin{proof}
  Let $c\in C_{n+1}$ and $g=\wtxr c\phi$. Note that for each $w\in\wtxr
  cS$, $\lgh(g(w))=\lgh(w)$. Therefore it suffices to prove that for
  each $l>0$, the restriction of $g$ to the subset
  \[W^l_c=\setof{w\in\wtxr cS}{\lgh(w)=l}\]
  is injective. Let $\mathcal{P}_l$ denote the proposition:
  \begin{quote}
   ``For all $c\in C_{n+1}$, the restriction of $g$ to
    $W^l_c$ is injective''.
  \end{quote}
  We prove $\mathcal{P}_l$ by induction on $l>0$.
  \begin{enumerate}
  \item Suppose $l=1$. Let $c\in C_{n+1}$ and $w_1,w_2\in W^1_c$ be such that
    $g(w_1)=g(w_2)$. Three cases are possible:
    \begin{enumerate}
    \item if $w_1=\fcst a$ for some $a\in S$, then
      $g(w_1)=\fcst{h(a)}=g(w_2)$ so that $w_2$ is of the form
      $\fcst{a'}$ for some $a'\in S$, whence
      $g(w_2)=\fcst{h(a')}=\fcst{h(a)}$. Now $p:S\to C_{n+1}$ is by
      definition an inclusion map, hence injective, and
      $p(a)=p(a')=c$. Therefore $a=a'$ and $w_1=w_2$;
    \item if $w_1=\finv{a}$ for some $a\in S$, then $w_2$ is of the
      form $\finv{a'}$ for some $a'\in S$. Therefore
      $\finv{h(a)}=g(w_1)=g(w_2)=\finv{h(a')}$, which implies
      $h(a)=h(a')$ and also
      $g(\fcst{a})=\fcst{h(a)}=\fcst{h(a')}=g(\fcst{a'})$. By the
      previous case, this implies $a=a'$ and $w_1=w_2$;
      \item if $w_1=\fid x$ for some $x\in C_n$ then
      $g(w_1)=\fid{f_n(x)}=g(w_2)$, whence $w_2$ is of the form
      $\fid{x'}$ for some $x'\in C_n$ such that $f_n(x')=f_n(x)$. Now
      $\unit{n+1}(x)=\extend p(w_1)=c=\extend p(w_2)=\unit{n+1}(x')$
      so that $x=x'$ and $w_1=w_2$.
    \end{enumerate}
    \item Suppose $l>1$ and $\mathcal{P}_{l'}$ holds for all
      $l'<l$. Let $c\in C_{n+1}$ and $w_1,w_2\in W^l_c$ be such that
      $g(w_1)=g(w_2)$. Then $w_1$ is of the form $(w'_1\fcomp{i_1}
      w''_1)$ and  $w_2$ is of the form $(w'_2\fcomp{i_2} w''_2)$, but
      as $z=g(w_1)=g(w_2)$ is of the form $(z'\fcomp i z'')$,
      necessarily $i_1=i_2=i$,
      $g(w'_1)=g(w'_2)=z'$ and $g(w''_1)=g(w''_2)=z''$. Now
      $f_{n+1}(c)=\extend{q}(z')\comp i \extend{q}(z'')$ and by~\cndii\ there is a unique decomposition
      $c=c'\comp i c''$ such that $f_{n+1}(c')=\extend{q}(z')$ and
      $f_{n+1}(c'')=\extend{q}(z'')$. As $c=\extend{p}(w'_1)\comp
      i\extend{p}(w''_1)=\extend{p}(w'_2)\comp i\extend{p}(w''_2)$,
      $f_{n+1}\extend p(w'_1)=\extend qg(z')=\extend pg(w'_2)$ and
      $f_{n+1}\extend p(w''_1)=\extend qg(z'')=\extend pg(w''_2)$, the
      uniqueness of the decomposition implies $\extend p
      (w'_1)=\extend p(w'_2)=c'$ and  $\extend p(w''_1)=\extend p(w''_2)=c''$.
Therefore $w'_1,w'_2\in \wtxr {c'}S$ and $w''_1,w''_2\in \wtxr
{c''}S$. Because $\lgh(w'_1),\lgh(w'_2),\lgh(w''_1)$ and $\lgh(w''_2)$
are all $<l$, the induction hypothesis applies, whence 
      $w'_1=w'_2$ and $w''_1=w''_2$, so that $w_1=w_2$.
    \end{enumerate}
 Therefore  $\mathcal{P}_l$ holds for all $l>0$ and we are done.  
 \end{proof}
\begin{lemma}\label{lemma:forward_lift_frel}
 For each $w\in \wtx S$ and $z'\in\wtx
 T$ such that $\wtx{\phi}(w)\frel z'$ there exists $w'\in\wtx S$ such
 that $\wtx\phi(w')=z'$ and $w\frel w'$.
\end{lemma}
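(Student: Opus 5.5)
Since $z=\wtx\phi(w)\frel z'$ is an elementary move applied at the root of $z$, I will argue by cases on the rule \ref{ei}--\ref{eviii} that produces it. The translation $\wtx\phi$ preserves the shape of formulas: it sends $\fcst a\mapsto\fcst{h(a)}$, $\finv a\mapsto\finv{h(a)}$, $\fid x\mapsto\fid{f_n(x)}$, and commutes with $\fcomp i$. So the left-hand side of the rule, read inside $z$, determines the corresponding decomposition of $w$. In each case I will write down the candidate $w'$ obtained by applying the \emph{same} rule to $w$, then check two things: that $w'$ is well typed, and that $\wtx\phi(w')=z'$. The relation $w\frel w'$ then holds by construction.

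The structural rules are immediate. For \ref{evi}, $z=((z_1\fcomp i z_2)\fcomp i z_3)$ forces $w=((w_1\fcomp i w_2)\fcomp i w_3)$ with $\wtx\phi(w_j)=z_j$. Take $w'=(w_1\fcomp i(w_2\fcomp i w_3))$; it is well typed because the typing conditions depend only on the types of the $w_j$, which are already composable inside $w$. Rule \ref{evii} works the same way: its typing side conditions concern only boundaries of the $w_j$, and these already hold because $w$ is well typed.

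For \ref{eii}--\ref{ev}, write, say, $z=(\fid{u}\fcomp i\fcst b)$. Then $w=(\fid{x_0}\fcomp i\fcst a)$ with $h(a)=b$ and $f_n(x_0)=u$. Here $u$ is an identity (or, for $i=n$, the source of $b$). The issue is that we need $x_0$ to be the corresponding cell for $a$, and this comes from the typing of $w$: the composite being well typed forces $\tge i$ or $\tgex n$ of $\fid{x_0}$ to match $a$. For $i=n$ this gives $x_0=\sce n^S(a)$ directly. For $i<n$ we only know $\tge i(x_0)=\sce i(\sce n^S a)$ and $f_n(x_0)=\unit n(\sce i(\ldots))$. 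So I invoke \cndi\ on $f$ to get $x_0=\unit n(z_0)$ for a unique $z_0$, and then compare $i$-targets to conclude $x_0=\unit n(\sce i x)$ with $x=\sce n^S a$. With that identification, $w'=\fcst a$ (or $\finv a$) works, since $\wtx\phi(\fcst a)=\fcst b$. For \ref{eviii}, $z=(\fcst b\fcomp n\finv b)$ gives $w=(\fcst a\fcomp n\finv{a'})$ with $h(a)=h(a')=b$. The typing of $w$ does not force $a=a'$, since $h$ need not be injective. This is the step I expect to be the main obstacle. I would use \cndii\ applied to $c=\extend p(w)$, together with the fact that $\extend p(\fcst a)=a$ and $\extend p(\fcst{a'})=a'$ lie over the same generator, and that $a\comp n\inv{a'}$ lies over $q(b)\comp n\inv{q(b)}=\unit{n+1}(\sce n q(b))$. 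By \cndi\ this composite is then an identity, so $a=a'$, and we may take $w'=\fid{x}$ with $x=\sce n^S a$, which maps to $\fid{f_n x}$ as required.

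For \ref{ei}, $z=(\fid{y}\fcomp i\fid{y'})$ gives $w=(\fid x\fcomp i\fid{x'})$ with $x,x'$ $i$-composable, because $w$ is well typed. Then $w'=\fid{x\comp i x'}$ maps to $\fid{f_n(x)\comp i f_n(x')}$, which is $z'$. Collecting the cases gives the lemma. The only nontrivial inputs are the uses of \cndi\ in the unit cases \ref{eiv}, \ref{ev}, and of \cndi\ and \cndii\ in the inverse case \ref{eviii}.
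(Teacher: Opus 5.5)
Your case analysis is the paper's. Cases \ref{ei}--\ref{evi}, including the use of \cndi\ for \ref{eiv} and \ref{ev}, are handled as there. The gap is case \ref{evii}, which you dismiss by saying that its side conditions ``already hold because $w$ is well typed''. In this forward direction that is false. If $w=((w_1\fcomp i w_3)\fcomp j(w_2\fcomp i w_4))$ with $i<j\leq n$, well-typedness of $w$ gives only that $w_1,w_3$ and $w_2,w_4$ are $i$-composable and that
\[
\tgex j^S w_1\comp i\tgex j^S w_3=\scex j^S w_2\comp i\scex j^S w_4 .
\]
The rule, however, requires $\tgex j^S w_1=\scex j^S w_2$ and $\tgex j^S w_3=\scex j^S w_4$. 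Typing cannot give this: for $i=0$ and $j=n=1$ the displayed equation only says that two composites of $1$-cells of $C$ agree, $f_1\comp 0 g_1=f_2\comp 0 g_2$, which does not force $f_1=f_2$. (In \cref{lemma:backward_lift_frel} the corresponding check really is automatic.)

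The missing step uses \cndii. Since $\wtx\phi(w)\frel z'$ is an instance of \ref{evii}, the required equalities hold downstairs for $z_l=\wtx\phi(w_l)$. Moreover $\scex j^T z_l=f_j(\scex j^S w_l)$ and $\tgex j^T z_l=f_j(\tgex j^S w_l)$. So the $j$-cell $\tgex j^S w_1\comp i\tgex j^S w_3$ of $C$ has two $i$-decompositions, $(\tgex j^S w_1,\tgex j^S w_3)$ and $(\scex j^S w_2,\scex j^S w_4)$. Both lie over the same pair $(\tgex j^T z_1,\tgex j^T z_3)=(\scex j^T z_2,\scex j^T z_4)$, so uniqueness in \cndii\ (in dimension $j$) makes them equal. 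Only then is $w'=((w_1\fcomp j w_2)\fcomp i(w_3\fcomp j w_4))$ a legitimate instance of \ref{evii}. The paper also passes over this case as straightforward, but the justification you give is wrong. It also contradicts your closing claim that \cndii\ is needed only in case \ref{eviii}.

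Your case \ref{eviii} is correct but takes a different route from the paper. The paper uses \cndi\ to see that both $n$-boundaries of $w$ are the same cell $x$. It then compares $w$ with $(\fcst a\fcomp n\finv a)$, which has the same image under $\wtx\phi$ and the same value $\unit{n+1}(x)$ under $\extend p$. It concludes by \cref{lemma:injectivity}, whose proof is where \cndii\ enters. You instead get $a\comp n\inv{a'}=\unit{n+1}(x)$ from \cndi\ and cancel in $C$, using $\tge n a=\tge n a'$ and $x=\sce n a'$:
\[
a=a\comp n(\inv{a'}\comp n a')=(a\comp n\inv{a'})\comp n a'=\unit{n+1}(x)\comp n a'=a'.
\]
This is shorter and needs neither \cndii\ nor \cref{lemma:injectivity}. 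You should still write this cancellation out, since ``so $a=a'$'' is the whole content of the case.
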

 \begin{proof}
   Let $g=\wtx\phi$, $w\in\wtx S$, $z=g(w)$ and $z'\in\wtx T$ be such that
   $z\frel z'$.

   Let us examine the different cases of~\ref{ssubsec:eqform}:
  In case~\ref{ei}, there are two $i$-composable cells $y_1,y_2\in D_n$ such that
  $z=(\fid{y_1}\fcomp i \fid{y_2})$ and $z'=\fid{y_1\comp i
    y_2}$. Then $w$ is of the form $(\fid{x_1}\fcomp i\fid{x_2})$ where
  $x_1$, $x_2$ are $i$-composable cells in $C_n$,
  $f_n(x_1)=y_1$ and $f_n(x_2)=y_2$. Let $w'=\fid{x_1\comp i
    x_2}$. We get $w\frel w'$ and $g(w')=z'$, whence the result.
  
In the first part of case~\ref{eii}  there is $b:y\to y'\in T$ such that $g(w)=(\fid y\fcomp n
\fcst b)$ and $z'=\fcst b$. Hence $w$ is of the form $(\fid x\fcomp n\fcst a)$ for
$a:x\to x'\in S$ such that $h(a)=b$. Then $w'=\fcst a$ satisfies
$g(w')=z'$ and $w\frel w'$, whence the result.
The second part of case~\ref{eii} is treated similarly, as well as
case~\ref{eiii}.

In the first part of case~\ref{eiv}, there is $b:y\to y'\in T$ such that
$g(w)=(\fid v\fcomp i \fcst b)$ with $v=\unit n(\sce i y)$ and
$z'=\fcst b$. Hence $w$ is of the form $(\fid u\fcomp i\fcst a)$ for
$a:x\to x'\in S$ such that $h(a)=b$, and $g(\fid u)=\fid{f_n(u)}=\fid
v$, which implies $f_n(u)=v=\unit n(\sce i y)$. By~\cndi, there is a unique
$u^i\in C_i$ such that $u=\unit n(u^i)$. Now $\fid u:u\to u$ and
$\fcst a:x\to x'$. As $w$
is well-typed, $\tge i u=\sce i x$. On the other hand, as
$u=\unit{n}(u^i)$, $\tge i u=u^i$. Therefore $w=(\fid u\fcomp i \fcst a)$
with $u=\unit n(\sce i x)$, so that $w'=\fcst a$ satisfies $w\frel
w'$. The second part of case~\ref{eiv} is similar, as well as case~\ref{ev}.

Cases~\ref{evi} and~\ref{evii} are straightforward.

In the first part of
   case~\ref{eviii}, there is a $b:y\to y'\in T$ such that $g(w)=(\fcst b\fcomp
   n\finv b)$ and $z'=\fid y$. Therefore $w$ is of the form $(\fcst{a_1}\fcomp 
   n\finv{a_2})$ for some $a_1:x_1\to x'_1\in S$,  $a_2:x_2\to x'_2\in
   S$ such that $x'_2=x'_1$, $h(a_1)=h(a_2)=b$ and
   $f_n(x_1)=f_n(x_2)=y$.  Now
   \begin{align*}
     f_{n+1}\extend p(w) & = f_{n+1}\extend p((\fcst{a_1}\fcomp n\finv{a_2}))\\
                                     & = f_{n+1}\extend p(\fcst{a_1})\comp n
                                       f_{n+1}\extend p(\finv{a_2})\\
                                     & = \extend q(\fcst b)\comp
                                       n\extend q(\finv b)\\
                                     & = \unit{n+1}(y).
   \end{align*}
   By  \cndi, there is a unique $x\in C_n$ such that
   $c=\extend p(w)=\unit{n+1}(x)$. Thus $\scex nw=\sce n\extend p(w)=x$
   and $\tgex nw=\tge n\extend p(w)=x$. On the other hand, $\scex
   nw=\scex n \fcst{a_1}=x_1$ and $\tgex nw=\tgex n\finv{a_2}=x_2$,
   whence $x_1=x_2=x$. If now $w_1=(\fcst{a_1}\fcomp n\finv{a_1})$,
   $\extend p(w_1)=\unit{n+1}(x)=\extend p(w)=c$, whence both $w$ and
   $w_1$
   belong to $\wtxr cS$ and as 
   $g(w_1)=g(w)$, by~\cref{lemma:injectivity}, $w=w_1$. Finally, by taking
   $w'=\fid x$, we get $g(w')=z'$ and $w\frel w'$, whence the result.
The second part of case~\ref{eviii} is similar.
 \end{proof}
 \begin{lemma}\label{lemma:backward_lift_frel}
 For each $w\in \wtx S$ and $z'\in\wtx
 T$ such that $z'\frel\wtx{\phi}(w)$ there exists $w'\in\wtx S$ such
 that $\wtx\phi(w')=z'$ and $w'\frel w$.
\end{lemma}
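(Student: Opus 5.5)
The plan is to mirror the proof of \cref{lemma:forward_lift_frel}. Set $g=\wtx\phi$ and $z=g(w)$, and go through the eight cases \ref{ei}--\ref{eviii} of~\ref{ssubsec:eqform}, now with $z'$ on the \emph{left} of the elementary move and $z=g(w)$ on the right. One fact is used throughout: $g$ is a $\phi$-compatible translation (\cref{lemma:transl}), so it preserves the syntactic shape of formulas. Hence $g(w)$ being a constant, an inverse constant, an identity or a $\fcomp i$-composite forces $w$ to be of the same form, with $g$ applied to the immediate subformulas. In each case I will build $w'$ with $g(w')=z'$ and $w'\frel w$.

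\emph{Easy cases.}
\begin{itemize}
\item Case~\ref{ei}. Here $z=\fid{y_1\comp i y_2}$ and $z'=(\fid{y_1}\fcomp i\fid{y_2})$, where necessarily $i<n$. So $w=\fid x$ with $f_n(x)=y_1\comp i y_2$. By \cndii\ (at dimension $n$) there is a unique decomposition $x=x_1\comp i x_2$ with $f_n(x_j)=y_j$. Take $w'=(\fid{x_1}\fcomp i\fid{x_2})$.
\item Case~\ref{eii}. Here $z=\fcst b$ with $b:y\to y'$, and $z'=(\fid y\fcomp n\fcst b)$. So $w=\fcst a$ with $a:x\to x'$ and $h(a)=b$, hence $f_n(x)=y$. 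Take $w'=(\fid x\fcomp n\fcst a)$.
\item Case~\ref{eiv}. Put $u=\unit n(\sce i x)$; then $f_n(u)=\unit n(\sce i y)$. Take $w'=(\fid u\fcomp i\fcst a)$.
\item Cases~\ref{eiii}, \ref{ev} and the right-hand variants are treated identically.
\item Cases~\ref{evi} and~\ref{evii}. Here $w$ decomposes as, say, $(w_1\fcomp i(w_2\fcomp i w_3))$ with $g(w_j)=z_j$. Take $w'=((w_1\fcomp i w_2)\fcomp i w_3)$, and similarly for exchange. Well-typedness of $w'$ follows because the composability conditions only involve the types of the $w_j$, and these are already forced by the well-typedness of $w$. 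This is a routine check, splitting into $j=n$ and $j<n$ for the exchange rule.
\end{itemize}

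\emph{Main obstacle: case~\ref{eviii}.} This is the only case where the left-hand side contains data (the generator $b$) that is not visible in $w$. Here $z=\fid y$ and $z'=(\fcst b\fcomp n\finv b)$ with $b:y\to y'$ in $T$, so $w=\fid x$ with $f_n(x)=y$. We must produce a generator $a\in S$ with source $x$ lying over $b$. We get it from \cndii\ with $i=n$, as follows.

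\begin{enumerate}
\item In $D$ we have $f_{n+1}(\unit{n+1}(x))=\unit{n+1}(y)=q(b)\comp n\inv{q(b)}$.
\item By \cndii\ there is a decomposition $\unit{n+1}(x)=c_1\comp n c_2$ with $f_{n+1}(c_1)=q(b)$ and $f_{n+1}(c_2)=\inv{q(b)}$.
\item Since $f_{n+1}(c_1)=q(b)\in q(T)$, we have $c_1\in S$. Set $a=c_1$; then $h(a)=b$ and $\sce n(a)=\sce n(\unit{n+1}(x))=x$.
\item Take $w'=(\fcst a\fcomp n\finv a)$. It is well typed, $g(w')=z'$, and $w'\frel\fid x=w$ by~\ref{eviii}.
\end{enumerate}

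The second half of~\ref{eviii} is symmetric: decompose $\unit{n+1}(x)$ over $\inv{q(b)}\comp n q(b)$ with $x$ now over $y'$, obtaining $c_2\in S$ over $q(b)$ with target $x$, and take $w'=(\finv a\fcomp n\fcst a)$ with $a=c_2$. Unlike the forward lemma, no appeal to \cref{lemma:injectivity} should be needed here, since we only need existence of a lift.
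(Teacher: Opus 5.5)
Your proof is correct and is essentially the paper's own argument: a case analysis on \ref{ei}--\ref{eviii} using that $\wtx\phi$ preserves the shape of formulas. In case \ref{ei} you use \cndii\ to split $x$, and in case \ref{eviii} you decompose $\unit{n+1}(x)$ over $q(b)\comp n\inv{q(b)}$ to extract a generator $a\in S$ lying over $b$; as you observed, \cref{lemma:injectivity} is not needed here, and the paper does not use it in this direction either.
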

\begin{proof}
  Let $g=\wtx\phi$, $w\in\wtx S$, $z=g(w)$ and $z'\in\wtx T$ be such that
  $z'\frel z$.

  Let us examine the different cases of~\ref{ssubsec:eqform}: In
  case~\ref{ei}, there are two $i$-composable cells $y_1,y_2\in D_n$ such that
  $z=\fid{y_1\comp i y_2}$ and $z'=(\fid{y_1}\fcomp i \fid{y_2})$. As
  $g(w)=z$, there is $x\in C_n$ such that $f_n(x)=y_1\comp i y_2$ and
  $w=\fid{x}$. By \cndii, there is a unique pair
  $x_1,x_2\in C_n$ of $i$-composable cells such that $x=x_1\comp i
  x_2$, $f_n(x_1)=y_1$ and $f_n(x_2)=y_2$. Thus $w'=(\fid{x_1}\fcomp
  i\fid{x_2})$ satisfies $g(w')=z'$ and $w'\frel w$, whence the
  result.

  In the first part of case~\ref{eii}  there is $b:y\to y'\in T$ such that
  $z=\fcst b$ and $z'=(\fid y\fcomp n
\fcst b)$. Hence $w$ is of the form $\fcst a$ for
$a:x\to x'\in S$ such that $h(a)=b$. Thus $w'=(\fid{x}\fcomp n \fcst
a)$ satisfies $g(w')=z'$ and $w'\frel w$, whence the result.
The second part of case~\ref{eii} is treated similarly, as well as
cases~\ref{eiii} to~\ref{evii}.
In the first part of
   case~\ref{eviii}, there is a $b:y\to y'\in T$ such that $z=\fid{y}$ and $z'=(\fcst b\fcomp
   n\finv b)$. Therefore $w$ is of the form $\fid x$ for some $x\in
   C_n$ such that $f_n(x)=y$. Let $c=\extend p(w)$. As
   $f_{n+1}(c)=\extend q(z)=\extend q(z')=\extend q(\fcst b)\comp n
   \extend q(\finv b)$, by~\cndii, there is a unique
   decomposition $c=a\comp n a'$ such that $f_{n+1}(a)=\extend
   q(\fcst b)=q(b)$ and $f_{n+1}(a')=\extend q(\finv b)$. By
   definition, $a\in S$ so that $w'=(\fcst a\fcomp n\finv a)$ satisfies
   $g(w')=z'$ and $w'\frel w$, whence the result. The second part of
   case~\ref{eviii} is similar.
 \end{proof}

\begin{lemma}\label{lemma:forward_lift_cfrel}
 For each $w\in \wtx S$ and $z'\in\wtx
 T$ such that $\wtx{\phi}(w)\cfrel z'$ there exists $w'\in\wtx S$ such
 that $\wtx\phi(w')=z'$ and $w\cfrel w'$.
\end{lemma}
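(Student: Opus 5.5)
We argue by structural induction on the derivation of $\wtx\phi(w)\cfrel z'$, that is, on the inductive definition of $\cfrel$ given in~\ref{ssubsec:eqform}. Write $g=\wtx\phi$ and $z=g(w)$.

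\emph{Base case.} Here $z\frel z'$. By \cref{lemma:forward_lift_frel} there is $w'\in\wtx S$ with $g(w')=z'$ and $w\frel w'$, hence $w\cfrel w'$.

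\emph{Context cases.} Suppose first that $z=(y\fcomp i u)$ and $z'=(y\fcomp i u')$ with $u\cfrel u'$. Since $g$ is a $\phi$-compatible translation, it sends constants and identities to constants and identities and composites to composites. So $w$ cannot be a leaf of the form $\fcst a$, $\finv a$ or $\fid x$, and must be of the form $(w_1\fcomp j w_2)$. Moreover $g(w)=(g(w_1)\fcomp j g(w_2))$, and by unique parsing of $\expr T$ we get $j=i$, $g(w_1)=y$ and $g(w_2)=u$. The derivation of $u\cfrel u'$ is strictly smaller, so the induction hypothesis applied to $w_2$ yields $w_2'\in\wtx S$ with $g(w_2')=u'$ and $w_2\cfrel w_2'$.

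Set $w'=(w_1\fcomp i w_2')$. Then $w\cfrel w'$ by clause (ii), and $g(w')=(y\fcomp i u')=z'$. The case $z=(u\fcomp i y)$, $z'=(u'\fcomp i y)$ is symmetric and uses clause (iii).

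\emph{Well-typedness of $w'$.} This is the only point requiring care, since $w'$ must lie in $\wtx S$. The relation $\frel$ preserves the type $\pair{\scex n}{\tgex n}$ of a formula. An induction on $\cfrel$ shows that $\cfrel$ does too: in the context clauses the new composite has the same type as the old one. In particular $\scex n^S w_2'=\scex n^S w_2$ and $\tgex n^S w_2'=\tgex n^S w_2$.

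For $i<n$, this shows that $\scex i^S w_2'=\scex i^S w_2=\tgex i^S w_1$, so $(w_1\fcomp i w_2')$ is well-typed. For $i=n$, $\scex n^S w_2'=\scex n^S w_2=\tgex n^S w_1$, so it is well-typed as well. The symmetric case is handled in the same way, using that $w_1'$ has the same type as $w_1$.

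I expect no real obstacle: all of the genuine lifting work is contained in \cref{lemma:forward_lift_frel}, and the present lemma only propagates it through the congruence contexts.
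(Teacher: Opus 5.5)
Your proof is correct and follows the paper's argument: the base case uses \cref{lemma:forward_lift_frel}, and in the two context clauses you decompose $w$ (since $\wtx\phi$ sends leaves to leaves and composites to composites) and apply the induction hypothesis to the relevant component. Your explicit check that $w'$ is well-typed, via invariance of types under $\cfrel$, supplies a detail the paper leaves implicit.
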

\begin{proof}
 Let $g=\wtx\phi$, $w\in\wtx S$, $z=g(w)$ and $z'\in\wtx T$ be such that
   $z\cfrel z'$. If~(i) $z\frel z'$, then
   by~\cref{lemma:forward_lift_frel}, there exists $w'\in\wtx S$ such
   that $g(w')=z'$ and $w\frel w'$, whence also $w\cfrel w'$. If~(ii)
   $z=(y\fcomp i v)$, $z'=(y\fcomp i v')$ with $v\cfrel v'$, then $w$
   is of the form $(x\fcomp i u)$ where $g(x)=y$ and $g(u)=v$. By
   induction, there exists $u'\in\wtx S$ such that $g(u')=v'$ and
   $u\cfrel u'$, so that $w'=(x\fcomp i u')$ satisfies $g(w')=z'$ and
   $w\cfrel w'$, as required. Case~(iii) where $z=(v\fcomp i y)$,
   $z'=(v'\fcomp i y)$ with $v\cfrel v'$ is similar to~(ii).
\end{proof}
\begin{lemma}\label{lemma:backward_lift_cfrel}
 For each $w\in \wtx S$ and $z'\in\wtx
 T$ such that $z'\cfrel\wtx{\phi}(w)$ there exists $w'\in\wtx S$ such
 that $\wtx\phi(w')=z'$ and $w'\cfrel w$.
\end{lemma}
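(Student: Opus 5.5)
The plan mirrors the proof of \cref{lemma:forward_lift_cfrel}, with \cref{lemma:backward_lift_frel} playing the role of \cref{lemma:forward_lift_frel}. Set $g=\wtx\phi$ and $z=g(w)$, and let $z'\in\wtx T$ with $z'\cfrel z$. We argue by induction on the derivation of $z'\cfrel z$, equivalently on the length of $z'$, and split according to the three clauses defining $\cfrel$.

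\textbf{Clause (i).} Here $z'\frel z$. By \cref{lemma:backward_lift_frel} there is $w'\in\wtx S$ with $g(w')=z'$ and $w'\frel w$, hence $w'\cfrel w$.

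\textbf{Clause (ii).} Here $z'=(y\fcomp i v')$ and $z=(y\fcomp i v)$ with $v'\cfrel v$. Since $g$ commutes with the formal compositions and preserves the outermost constructor, $w=g^{-1}$-decomposes as $w=(x\fcomp i u)$ with $g(x)=y$ and $g(u)=v$. The induction hypothesis applied to $u$ and $v'$ gives $u'\in\wtx S$ with $g(u')=v'$ and $u'\cfrel u$. We then put $w'=(x\fcomp i u')$. Two things remain.
\begin{itemize}
\item $g(w')=(y\fcomp i v')=z'$, and $w'\cfrel w$ holds by clause (ii).
\item $w'$ must be well typed, i.e.\ the relevant boundaries of $x$ and $u'$ must match. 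Since $u'\cfrel u$ and $\cfrel$ preserves types, $\scex i^S u'=\scex i^S u$ (and likewise for the $n$-source when $i=n$), so composability is inherited from $w$.
\end{itemize}

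\textbf{Clause (iii).} Here $z'=(v'\fcomp i y)$ with $v'\cfrel v$. This is handled symmetrically to clause (ii).

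The one point that needs care is the well-typedness of $w'$ in clauses (ii) and (iii). The worry is that the lifted $u'$ might have a different type in $C$, even though its image has the right type in $D$. This does not arise, because the type of $u'$ is computed in $\wtx S$ and is invariant under $\cfrel$, as noted in \ref{ssubsec:eqform}. The same invariance is needed in the forward lemma, so the overall argument is essentially routine.
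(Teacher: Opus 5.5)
Your proposal is correct and is the paper's own argument: the paper's proof repeats the proof of \cref{lemma:forward_lift_cfrel}, which splits into the three clauses defining $\cfrel$ and uses induction in clauses (ii) and (iii), with \cref{lemma:backward_lift_frel} in place of \cref{lemma:forward_lift_frel}. Your extra check that $(x\fcomp i u')$ is well typed, using invariance of types under $\cfrel$, makes explicit a point the paper leaves implicit.
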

\begin{proof}
  Same as~\cref{lemma:forward_lift_cfrel}, but using~\cref{lemma:backward_lift_frel} instead of~\cref{lemma:forward_lift_frel}.
\end{proof}

 \begin{lemma}\label{lemma:lift_freq}
 Let $c\in C_{n+1}$. For each $w,w'\in\wtxr c{S}$, if $\wtxr
  c\phi(w)\freq \wtxr c\phi (w')$, then $w\freq w'$.
\end{lemma}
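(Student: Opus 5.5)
We argue by lifting zigzags of one-step rewrites from $\wtx T$ back to $\wtx S$. Set $g=\wtx\phi$, $d=f_{n+1}(c)$, and let $z=g(w)$, $z'=g(w')$. Since $z\freq z'$, there is a finite sequence $z=z_0,z_1,\dots,z_l=z'$ in $\wtx T$ such that for each $j<l$ either $z_j\cfrel z_{j+1}$ or $z_{j+1}\cfrel z_j$. We construct inductively a sequence $w=w_0,w_1,\dots,w_l$ in $\wtx S$ with $g(w_j)=z_j$, and with $w_j\cfrel w_{j+1}$ or $w_{j+1}\cfrel w_j$ matching the direction of the corresponding step in $T$.

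For the inductive step, suppose $w_j$ is built with $g(w_j)=z_j$. If $z_j\cfrel z_{j+1}$, we apply \cref{lemma:forward_lift_cfrel} to $w_j$ and obtain $w_{j+1}$ with $g(w_{j+1})=z_{j+1}$ and $w_j\cfrel w_{j+1}$. If $z_{j+1}\cfrel z_j$, we apply \cref{lemma:backward_lift_cfrel} instead and obtain $w_{j+1}$ with $w_{j+1}\cfrel w_j$. In either case $w\freq w_j$ for all $j$, so in particular $w\freq w_l$, and $g(w_l)=z'=g(w')$.

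It remains to conclude $w_l=w'$, and this is where \cref{lemma:injectivity} enters. That lemma gives injectivity only on each fibre $\wtxr cS$, so we must check that $w_l\in\wtxr cS$. Since $\cfrel$ is contained in $\freq$, and $\extend p=\fcls p\cans S$ is constant on $\freq$-classes, we get $\extend p(w_l)=\extend p(w)=c$. Hence $w_l$ and $w'$ both lie in $\wtxr cS$ and have the same image under $\wtxr c\phi$, so $w_l=w'$ by \cref{lemma:injectivity}. Therefore $w\freq w'$.

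The main point of care is exactly this last step: the zigzag in $\wtx T$ may pass through intermediate formulas that are not apparently related to $c$. This does no harm, because each intermediate $w_j$ is $\freq$-equivalent to $w$, so all of them stay in the fibre $\wtxr cS$. The substantive case analysis was already carried out in \cref{lemma:forward_lift_frel} and \cref{lemma:backward_lift_frel}; here we only assemble them.
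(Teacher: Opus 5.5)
Your proof is correct and follows essentially the same route as the paper. You lift the zigzag in $\wtx T$ step by step via \cref{lemma:forward_lift_cfrel} and \cref{lemma:backward_lift_cfrel}, keep every lift in the fibre $\wtxr cS$ because $\extend p$ is constant on $\freq$-classes, and conclude with \cref{lemma:injectivity}. The only difference is presentational: the paper inducts on the length of a shortest chain, peeling off the first step and invoking injectivity in the base case, whereas you lift the whole chain at once and apply injectivity a single time at the end.
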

\begin{proof}
  Let $g=\wtxr c\phi$ and $w,w'\in\wtxr cS$ be such that $g(w)\freq
  g(w')$. By~\ref{ssubsec:eqform}, there is a chain $z_0,...,z_l$ of
  formulas in $\wtx S$ with $l\geq 0$ such that $z_0=g(w)$, $z_l=g(w')$ and for each
  $i<l$, either $z_i\cfrel z_{i+1}$ or $z_{i+1}\cfrel z_i$. We reason
  by induction on the smallest such $l$. If $l=0$, then $g(w)=g(w')$,
  and by~\cref{lemma:injectivity} $w=w'$ whence $w\freq w'$. If
  $l>0$, either $z_0\cfrel z_1$ or $z_1\cfrel z_0$. In the first case,
  by~\cref{lemma:forward_lift_cfrel}, there is a $w_1\in\wtx S$ such
  that $g(w_1)=z_1$ and $w\cfrel w_1$, which implies $w_1\in\wtxr cS$,
  and by induction $w_1\freq w'$, whence also $w\freq w'$. In the
  second case, by~\cref{lemma:backward_lift_cfrel}, there is a
  $w_1\in\wtx S$ such that $g(w_1)=z_1$ and $w_1\cfrel w$, which
  implies $w_1\in\wtxr cS$ and by induction $w_1\freq w'$, whence also
  $w\freq w'$. 
\end{proof}
\begin{lemma}\label{lemma:inj}
 The map $\fcls p:\wtxeq S\to
 C_{n+1}$ is injective.
\end{lemma}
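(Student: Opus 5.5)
We reduce injectivity of $\fcls p$ to \cref{lemma:lift_freq}, using the commutative diagram~(\ref{eq:main_diagram}) of \cref{lemma:main_diagram} together with the fact that $\fcls q:\wtxeq T\to D_{n+1}$ is a bijection. Since $\cans S$ is surjective, every element of $\wtxeq S$ has the form $\fcls w$ for some $w\in\wtx S$. So it suffices to show: if $w,w'\in\wtx S$ satisfy $\fcls p(\fcls w)=\fcls p(\fcls{w'})$, then $w\freq w'$.

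Let $c=\extend p(w)=\fcls p\cans S(w)$. By hypothesis $\extend p(w')=c$ as well, so both $w$ and $w'$ lie in $\wtxr cS$, as defined in~\ref{paragr:restricted_formulas}. We push them forward along $\phi$. By commutativity of the left and middle squares of~(\ref{eq:main_diagram}),
\[
\fcls q\,\cans T\,\wtx\phi(w)=f_{n+1}\fcls p\,\cans S(w)=f_{n+1}(c)=\fcls q\,\cans T\,\wtx\phi(w').
\]
Since $\fcls q$ is injective, this gives $\cans T\wtx\phi(w)=\cans T\wtx\phi(w')$, that is, $\wtxr c\phi(w)\freq\wtxr c\phi(w')$.

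Now \cref{lemma:lift_freq} applies directly and yields $w\freq w'$, hence $\fcls w=\fcls{w'}$, which is injectivity. I expect no real obstacle at this stage. All the combinatorial work, namely lifting rewriting steps and the injectivity of $\wtxr c\phi$, is already contained in \cref{lemma:injectivity}, \cref{lemma:forward_lift_cfrel}, \cref{lemma:backward_lift_cfrel} and \cref{lemma:lift_freq}. The only point to check carefully is that the middle square of~(\ref{eq:main_diagram}) is used with the correct orientation, so that the classes of the translated formulas agree in $\wtxeq T$. Combined with \cref{lemma:surj}, this result will then show that $\fcls p$ is a bijection, so that $C$ is freely generated by $S$ over $\trk n C$.
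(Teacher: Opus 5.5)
Your proposal is correct and follows essentially the same route as the paper's proof. You push $w,w'\in\wtxr{c}{S}$ forward along $\wtx\phi$, use the bijectivity of $\fcls q$ to get $\wtxr{c}{\phi}(w)\freq\wtxr{c}{\phi}(w')$, and conclude with \cref{lemma:lift_freq}; the paper reaches the same equality through the identity $\extend q\,\wtx\phi=f_{n+1}\extend p$ from \ref{paragr:restricted_formulas}, which is exactly what your combination of the left and middle squares of~(\ref{eq:main_diagram}) gives.
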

\begin{proof}
  Let $w,w'\in\wtx S$ be such that $\fcls p(\fcls w)=\fcls
  p(\fcls{w'})$, which implies $\extend p(w)=\extend p(w')=c$. Therefore
  \begin{align*}
    \extend q\wtxr c\phi(w) & = f_{n+1}\extend p(w)\\
                         & = f_{n+1}\extend p(w')\\
                         & = \extend q\wtxr c\phi(w').
  \end{align*}
  Now $\extend q=\fcls q\cans T$ and by hypothesis $\fcls q$ is a
  bijection, so that $\cans T\wtxr c\phi(w)=\cans T\wtxr c\phi(w')$, which
  means that $\wtxr c\phi(w)\freq \wtxr
  c\phi(w')$. By~\cref{lemma:lift_freq}, $w\freq w'$, whence $\fcls
  w=\fcls{w'}$ and we are done. \end{proof}
\begin{proposition}\label{prop:sum_up}
  Let $n\geq k$, $C$, $D$ be two $\pair{n{+}1}{k}$-categories, and $f:C\to D$ be a
  discrete Conduché fibration. Suppose there is a set $T\subset D_{n+1}$ such
  that $D$ is isomorphic to $\frf{n,k}\clx{\trk nD}{T}$ and  let
  $S=f_{n+1}^{-1}(T)\subset C_{n+1}$. Then $C$ is isomorphic to the
  $\pair{n{+}1}k$-category $\frf{n,k}\clx{\trk nC}{S}$.
\end{proposition}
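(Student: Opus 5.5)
The plan is to assemble \cref{lemma:surj} and \cref{lemma:inj}, after placing ourselves in the setting of~\ref{paragr:generators}. Take $q:T\to D_{n+1}$ to be the inclusion. Then $S=f_{n+1}^{-1}(T)=f_{n+1}^{-1}(q(T))$, which is exactly the set defined there, and we take $p:S\to C_{n+1}$ to be the inclusion and $h:S\to T$ to be the restriction of $f_{n+1}$. The hypothesis that $D\cong\frf{n,k}\clx{\trk nD}{T}$ should be read as saying that the canonical comparison morphism, which is the transpose of $\pair{\Unit{\trk nD}}{q}$, is an isomorphism. By~\ref{paragr:addproperties} this means precisely that $\fcls q:\wtxeq T\to D_{n+1}$ is a bijection. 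This is the standing hypothesis of~\ref{paragr:hypotheses}, and together with the assumption that $f$ is a discrete Conduché fibration it puts all of \cref{lemma:surj} through \cref{lemma:inj} at our disposal.

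Next, by \cref{lemma:surj} and \cref{lemma:inj}, the map $\fcls p:\wtxeq S\to C_{n+1}$ is a bijection. By~\ref{paragr:addproperties}, $\fcls p$ is the $(n{+}1)$-component $\phi_{n+1}$ of the canonical morphism
\[
\phi:\frf{n,k}\clx{\trk nC}{S}\cong\extend{\trk nC}\to C,
\]
which is the transpose of $\pair{\Unit{\trk nC}}{p}$. By construction $\trk n\phi$ is the identity of $\trk nC$. So $\phi$ is the identity in dimensions $\le n$ and a bijection in dimension $n+1$, and these are all the dimensions of an $\pair{n{+}1}k$-category. A morphism of $(n{+}1)$-categories that is bijective on cells in every dimension is an isomorphism, since its inverse automatically preserves sources, targets, units and compositions. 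Hence $\phi$ is an isomorphism, which is the claim.

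There is essentially no obstacle left, because all the combinatorial work is contained in \cref{lemma:surj} and \cref{lemma:inj}. The only point requiring care is the translation between the hypothesis ``$D$ is isomorphic to $\frf{n,k}\clx{\trk nD}{T}$'' and the bijectivity of $\fcls q$. The statement should be interpreted, as in~\ref{paragr:generators}, with the isomorphism being the canonical one induced by the inclusion of $T$. This is exactly what gives $\fcls q$ bijective, and so it is what makes $T$ a set of generators in the sense used by the lemmas.
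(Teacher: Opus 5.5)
Your proposal is correct and follows the paper's proof, which simply combines \cref{lemma:surj} and \cref{lemma:inj} to conclude that $\fcls p:\wtxeq S\to C_{n+1}$ is a bijection. Your additional remarks make explicit what the paper leaves implicit: that the hypothesis on $D$ must be read as the canonical comparison map being an isomorphism (so that $\fcls q$ is bijective, as asserted in~\ref{paragr:generators}), and that a morphism bijective on cells in every dimension is an isomorphism.
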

\begin{proof}
  From Lemmas~\ref{lemma:surj} and~\ref{lemma:inj}, we deduce that
  $\fcls p:\wtxeq S\to C_{n+1}$ is a bijection. Therefore, up to
  isomorphism, the $\pair{n+1}k$-category $C$ is freely generated by the cellular
  extension $\clx{\trk nC}S$.
\end{proof}
\begin{remark}\label{remark:ncat}
 Given a cellular extension $\clx CS$ of an $\pair nk$-category $C$ by
 a set $S$ of $(n{+}1)$-generators, we have decribed the freely
 generated $\pair{n{+}1}{k}$-category $\frf{n,k}\clx CS$ by means of
 formal expressions $\wtx S$ and their equivalence classes $\wtxeq
 S$. Likewise, starting with a plain $n$-category $C$ and a cellular
 extension $\clx CS$ of it, the freely generated $(n{+}1)$-category
 $\frf n\clx CS$  is decribed by means of formal expressions $\wtxo S$
 and their equivalence classes $\wtxeqo S$. We simply drop from our
 language all  constants $\finv a$ as well as the
 axioms~\ref{ssubsec:eqform} \ref{eiii}, \ref{ev} and \ref{eviii} 
 pertaining to these formal inverses, and define relations $\frel$,
 $\cfrel$ and $\freq$ accordingly. Clearly all lemmas~\ref{lemma:surj}
 to~\ref{lemma:inj} still hold in this restricted setting
 and~\cref{prop:sum_up} immediately admits the restricted
 version below. Note that this result is already proved
 in~\cite{guetta:poldcf} by using a very similar if not exactly
 identical formal system.
\end{remark}
\begin{proposition}\label{prop:sum_up_restr}
  Let $n\in\N$, $C$, $D$ be two $(n{+}1)$-categories, and $f:C\to D$ be a
  discrete Conduché fibration. Suppose there is a set $T\subset D_{n+1}$ such
  that $D$ is isomorphic to $\frf n\clx{\trk nD}{T}$ and  let
  $S=f_{n+1}^{-1}(T)\subset C_{n+1}$. Then $C$ is isomorphic to the
  $(n{+}1)$-category $\frf n\clx{\trk nC}{S}$.
\end{proposition}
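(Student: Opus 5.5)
The plan is to rerun the argument of \cref{prop:sum_up} verbatim in the restricted formal system of \cref{remark:ncat}, checking that nothing in it depends on the formal inverses $\finv a$. Concretely, with $p\colon S\to C_{n+1}$ the inclusion, $q\colon T\to D_{n+1}$ the inclusion of generators and $h\colon S\to T$ defined as in~\ref{paragr:generators}, we first form the restricted analogues of $\wtx S$, $\wtxeq S$, the canonical surjection $\cans S$, and the maps $\extend p$, $\fcls p$. The restricted versions of \cref{lemma:transl}, \cref{lemma:mapext} and \cref{prop:univprop} hold with the same structural-induction proofs: one simply deletes the clauses for $\finv a$ and the axioms \ref{eiii}, \ref{ev}, \ref{eviii}. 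This identifies $\wtxeqo S$ with the $(n{+}1)$-cells of $\frf n\clx{\trk nC}S$, and gives the commutative diagram of \cref{lemma:main_diagram} in this setting. The hypothesis that $D\cong\frf n\clx{\trk nD}T$ then means that $\fcls q\colon\wtxeqo T\to D_{n+1}$ is bijective, so the statement reduces to showing that $\fcls p\colon\wtxeqo S\to C_{n+1}$ is bijective.

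\emph{Surjectivity.} We follow \cref{lemma:surj}. For each $d\in D_{n+1}$, choose a formula $v_d$ of minimal length representing it, and build $u_c$ for $c\in C_{n+1}$ by induction on the length of $v_{f(c)}$. The identity case uses \cndi, the generator case uses the definition of $S$ as $f_{n+1}^{-1}(T)$, and the composite case uses \cndii\ together with the minimal-length bound. The inverse case, which was the only delicate point of \cref{lemma:surj}, simply disappears.

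\emph{Injectivity.} We follow \cref{lemma:injectivity} through \cref{lemma:inj}. First, $\wtxr c\phi$ is injective, by induction on length using \cndii\ and the injectivity of $p$. Next we lift elementary moves forwards and backwards along $\wtx\phi$ for the remaining axioms \ref{ei}, \ref{eii}, \ref{eiv}, \ref{evi}, \ref{evii}: \ref{ei} uses \cndii\ in the backward direction, \ref{eiv} uses \cndi\ to recognise the unit, and associativity and exchange are purely syntactic. From there we pass to $\cfrel$ by induction on the context and to $\freq$ by induction on the length of the zig-zag. Finally, since $\fcls q$ is injective, $\extend p(w)=\extend p(w')$ forces $\wtx\phi(w)\freq\wtx\phi(w')$, hence $w\freq w'$. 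Combining the two halves, $\fcls p$ is a bijection, so $C\cong\frf n\clx{\trk nC}S$.

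The main thing to verify is that the step-by-step lifting lemmas, in particular the backward lifting of \ref{ei} and the forward lifting of \ref{eiv}, do not secretly depend on the removed axioms. I expect this check to be routine, since the difficult case \ref{eviii} is exactly the one that is discarded. As a sanity check, this agrees with the version already proved in \cite{guetta:poldcf}.
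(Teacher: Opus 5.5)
Your proposal is correct and is essentially the paper's own argument. The paper proves this statement through \cref{remark:ncat}: it deletes the constants $\finv a$ and the axioms \ref{eiii}, \ref{ev} and \ref{eviii}, then observes that Lemmas~\ref{lemma:surj} to~\ref{lemma:inj}, and hence the bijectivity of $\fcls p$ as in \cref{prop:sum_up}, carry over unchanged (citing \cite{guetta:poldcf} for the same result), and your case-by-case check is accurate: the inverse case of \cref{lemma:surj} and case \ref{eviii} of the lifting lemmas simply disappear, while \cndi\ and \cndii\ are still invoked exactly where they were before.
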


\begin{theorem}\label{thm:main}
  Let $0 \leq k\leq \oo$ and $C$, $D$ be two \ook k-categories. If $f:C\to
  D$ is a discrete Conduché fibration and $D$ is freely generated by an
  \ook k-polygraph $Q$, then
$C$ is also freely generated by an \ook k-polygraph $P$. The
$n$-generators of $P$ can be chosen as those cells $a\in C_n$ such that
$f_n(a)=q_n(b)$ for $b\in Q_n$, where $q_n:Q_n\to D_n$ is the inclusion map
of $n$-generators in $D_n$.
\end{theorem}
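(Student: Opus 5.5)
The plan is to build the polygraph $P$ dimension by dimension, using induction on $n$, by applying the truncated statements \cref{prop:sum_up_restr} and \cref{prop:sum_up} to the truncations $\trk{n+1} f\colon \trk{n+1}C\to\trk{n+1}D$. For $k=\oo$ this is \cite[Theorem 6.11]{guetta:poldcf}; alternatively, only \cref{prop:sum_up_restr} is used at each level, so we may assume $k\in\N$.

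We first check that truncation preserves the hypotheses. Conditions \cndi\ and \cndii\ only involve cells of dimension $\leq m$ when tested on $m$-cells, so each $\trk m f$ is again a discrete Conduché fibration in $\npCat mk$, or in $\nCat m$ when $m\leq k$. On the $D$ side, \ref{paragr:opolpushout} shows that $\trk{n+1}D\cong\frf{n,k}\clx{\trk nD}{T_{n+1}}$ for $n\geq k$, and similarly with $\frf n$ for $n<k$. Here $T_{n+1}=q_{n+1}(Q_{n+1})\subset D_{n+1}$ is the set of generators, which is a subset since $q_{n+1}$ is injective.

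We then set $P_0=C_0$ and $S_{n}=f_n^{-1}(q_n(Q_n))$ for $n\geq 1$. We prove by induction on $n$ that $(S_1,\dots,S_n)$ forms an $n$-polygraph (or an $\pair nk$-polygraph) $P^{(n)}$ with $\frcat{P^{(n)}}\cong\trk nC$ (respectively $\frgp{P^{(n)}}\cong\trk nC$), with compatible isomorphisms. At the base $n=0$ there is nothing to prove, since $\trk 0C=C_0$ is free on $C_0$. For the inductive step, apply \cref{prop:sum_up_restr} when $n<k$, or \cref{prop:sum_up} when $n\geq k$, to $\trk{n+1}f$ with $T=T_{n+1}$. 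This gives $\trk{n+1}C\cong\frf{n}\clx{\trk nC}{S_{n+1}}$ (or the $\frf{n,k}$ version). Composing with the inductive isomorphism $\trk nC\cong\frcat{P^{(n)}}$ makes $\clx{\trk nC}{S_{n+1}}$ into a cellular extension of the free category on $P^{(n)}$, which defines $P^{(n+1)}$. The generators obtained this way are exactly the cells described in the statement.

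It remains to assemble the levels. The $P^{(n)}$ are compatible under truncation, so they define an object $P$ of the limit $\opPol k$. Because the isomorphisms $\trk n\frgp P\cong\trk nC$ are compatible with truncation, and both $\frgp P$ and $C$ are the colimits of their skeleta (\ref{paragr:opolpushout}), we obtain $\frgp P\cong C$.

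The main obstacle is the bookkeeping in the inductive step. We must check that the isomorphism produced by \cref{prop:sum_up} restricts to the identity on $\trk nC$, so that the isomorphisms at successive levels genuinely glue into one identification. This follows from the universal property in \cref{prop:univprop}: the comparison map $\fcls p$ is the identity in dimensions $\leq n$.
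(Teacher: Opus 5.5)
Your proposal is correct and follows essentially the same route as the paper. The paper also sets $P_0=C_0$ and $P_n=f_n^{-1}(q_n(Q_n))$, then applies \cref{prop:sum_up_restr} for $n<k$ and \cref{prop:sum_up} for $n\geq k$ to the truncations to build $P$ inductively. Your extra checks make explicit what the paper leaves implicit: truncations of discrete Conduch\'e fibrations are again such fibrations, and the comparison morphism is the identity in dimensions $\leq n$, so the levels glue into a single isomorphism $\frgp P\cong C$.
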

\begin{proof}
  Let $Q$ be an \ook{k}-polygraph such that
  $D=\frgp Q$. For each $n\geq 0$, let $q_n:Q_n\to D_n=\frgp Q_n$ be the
  inclusion map of generators. Let us define $P_n\subset C_n$ by
  \[
    P_n=\setof{a\in C_n}{f_n(a)\in q_n (Q_n)}
  \]
  and let $p_n:P_n\to C_n$ be the inclusion map. The map
  $h_n:P_n\to Q_n$ taking each $a\in P_n$ to the unique $b\in Q_n$
  such that $q_n(b)=f_n(a)$ fits into the commutative diagram
  \[
    \begin{tikzcd}
      C_n \ar[d,"f_n"'] & P_n \ar[l,"p_n"'] \ar[d,"h_n"]\\
      D_n & Q_n. \ar[l,"q_n"]
    \end{tikzcd}
\]
  Thus, for each
  $n\geq 0$, we get a cellular extension $\clx{\trk nC}{P_{n+1}}$ of
  $\trk n C$ by the subset $P_{n+1}\subset C_{n+1}$.

  Now, if $n<k$,
  by~\cref{prop:sum_up_restr}, $\trk{n+1}C$ is isomorphic to $\frf
  n\clx{\trk nC}{P_{n+1}}$, whereas if $n\geq k$, by
\cref{prop:sum_up},
  $\trk{n+1}C$ is isomorphic to $\frf
  {n,k}\clx{\trk nC}{P_{n+1}}$. Starting with $P_0=C_0$, we have built
  inductively an \ook k-polygraph
  $P$ such that $C$ is isomorphic to $\frgp P$. Therefore $C$ is
  freely generated by an \ook k-polygraph, whose generators can be
  chosen as required.
\end{proof}


%% file: resolproj.tex
\section{The polygraphic resolution theorem}\label{sec:projresol}

\subsection{Folk model structure}
\begin{paragr}\label{paragr:folk}
  Recall that there exists a so-called ``folk'' model category
  structure on the category $\oCat$ \cite{lafontetal:folkms}, characterized by the fact that
  all objects are fibrant and the cofibrations are generated by the
  set 
  \[
    \setof{\gcof{n}:\Sph {n-1} \to \Dsk n }{n \in \N}.
  \]
  The cofibrant objects of this model structure are exactly the
  \oo\nbd-categories free on a polygraph. For a description of the
  weak equivalences, see \cite[Theorem 6.19]{lafontetal:folkms}. 
\end{paragr}
\begin{paragr}
  More generally, for every $k \in \N$, there exists a model
  category structure on the category $\opCat{k}$ \cite{ara_lucas:folmon} which is
  right-induced from the inclusion functor
  \[
    \oemb k:\opCat{k} \to \oCat.
  \]
  In other words, every object of $\opCat{k}$ is fibrant and the cofibrations of $\opCat{k}$ are generated
  by the set 
  \[
    \setof{\opifun{k}(\gcof{n}):\opifun{k}(\Sph {n-1}) \to \opifun{k}(\Dsk n) }{n \in \N},
  \]
  where $\opifun{k} \colon \oCat \to \opCat{k}$ is left adjoint to
  $\oemb k$ (see~\cref{paragr:invert}).
\end{paragr}
\begin{definition}\label{def:respol}
  Let $0 \leq k\leq \oo$ and $C$ be an $\ook k$-category. A {\em
  polygraphic resolution} of $C$ in $\opCat k$ consists of an
$\ook k$-polygraph $P$ together with a trivial fibration
$p:\frgp P\to C$
in the folk model structure of $\opCat k$.
\end{definition}
\begin{paragr}\label{paragr:projcof}
  As with any cofibrantly generated model structure, given a small
  category $C$, we can consider the \emph{projective} model structure
  on the category of functors
  \[
    \funopCat{k}{C},
  \]
  for any $0 \leq k \leq \oo$. The weak equivalences, fibrations
  and trivial fibrations are the pointwise ones (in particular, every
  object is fibrant) and the cofibrations are generated by the set of
  morphisms
  \[
   \coprod_{\homset{C}{c}{-}}\opifun{k}(\gcof{n}): \coprod_{\homset{C}{c}{-}}\opifun{k}(\Sph{n-1}) \to \coprod_{\homset{C}{c}{-}}\opifun{k}(\Dsk{n}),
  \]
  for all objects $c$ of $C$ and $n \in \N$ (see~\cite[Theorem 11.6.1]{hirschhorn:modctl}).
\end{paragr}
\subsection{Slice categories}\label{subsection:slice_cat}
\begin{paragr}\label{paragr:slice_cat}
  Let $C$ be a small category. Recall that each object $c$ of $C$
  determines the slice category $\tr Cc$ whose objects are pairs
  $\pair{x}{f\colon x \to c}$ where $x$ is an object of $C$ and $f:x\to c$ is
  a morphism in $C$. A morphism $\pair xf\to \pair{x'}{f'}$ is a
  morphism $g:x\to x'$ in $C$ such that $f'g=f$. The correspondence
  $c\mapsto \tr Cc$ is then the object part of a functor
\[
    \begin{aligned}
      \tr{C}{-} \colon C &\to \Cat \\
      c &\mapsto \tr{C}{c}.
    \end{aligned}
  \]
  For each object $c$ of $C$, there is a canonical forgetful functor
  $\tr Cc\to C$
  taking $\pair x{f:x\to c}$ to $x$.
\end{paragr}

\begin{lemma}\label{lemma:slicescond}
  For each small category $C$ and each object $c$ of $C$, the canonical functor
 $\tr Cc\to C$
  is a discrete Conduché fibration.
\end{lemma}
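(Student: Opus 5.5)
We verify the two conditions of \cref{def:conduche} directly, viewing $\tr Cc$ and $C$ as $1$-categories, hence as \oo-categories via $\oinc 1$. By \ref{paragr:properties_conduche}, it suffices to check \cndi\ and \cndii\ in $\nCat 1$, where only $n\leq 1$ matters. Cells of dimension $\geq 2$ are all identities, and the conditions for them reduce to those in lower dimensions.

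Write $U:\tr Cc\to C$ for the forgetful functor, $U\pair x f = x$ and $U(g)=g$.

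\emph{Condition \cndi.} The only nontrivial case is $m=1$, $n=0$. Let $g:\pair xf\to\pair{x'}{f'}$ be a morphism of $\tr Cc$ with $U(g)=\unit 1(y)$. Then $x=x'=y$ and $g=1_x$. The relation $f'g=f$ gives $f'=f$, so $g=\unit 1(\pair xf)$, and this object is unique. For higher $m$ the cell $x$ is already an iterated identity on a lower cell, and uniqueness comes from injectivity of units. This is routine bookkeeping.

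\emph{Condition \cndii.} The only nontrivial case is $n=1$, $i=0$. Let $g:\pair xf\to\pair{x''}{f''}$ with $g=g_2g_1$ in $C$, where $g_1:x\to x'$ and $g_2:x'\to x''$. In the paper's notation this reads $g=g_1\comp 0 g_2$. We take the middle object $\pair{x'}{f''g_2}$. Then $g_2:\pair{x'}{f''g_2}\to\pair{x''}{f''}$ is a morphism, and $g_1:\pair xf\to\pair{x'}{f''g_2}$ is one as well, because $f''g_2g_1=f''g=f$.

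For uniqueness, any factorization lying over $(g_1,g_2)$ must have $g_2$ with target $\pair{x''}{f''}$. Its source must then be $\pair{x'}{f''g_2}$, and $g_1$ is forced. For $n\geq2$ all cells are identities, so \cndii\ follows from the dimension-$1$ statement together with \cndi.

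The only point needing care is this uniqueness of the intermediate object in the slice. It holds because objects of $\tr Cc$ are determined by their structure map to $c$, and that map is forced by the target $f''$. The whole argument is elementary; the main risk is being careful about the reversed composition convention $\comp 0$.
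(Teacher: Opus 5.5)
Your proof is correct and follows the same route as the paper. In both, the decomposition of $g=g_2g_1$ in $\tr Cc$ uses the intermediate object $(x',f''g_2)$, which is forced by the target; this is exactly the paper's choice $h=f'\circ g_2$. The only difference is that you also check \cndi\ explicitly and comment on higher dimensions. The paper leaves both implicit, since \cndi\ follows from \cndii\ and it suffices to work in $\nCat 1$ (see \ref{paragr:properties_conduche}).
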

\begin{proof}
Consider a morphism in $\tr Cc$ from $\pair xf$ to $\pair{x'}{f'}$, that
is, a morphism $g:x\to x'$ in $C$ such that the following triangle commutes:
\[
    \begin{tikzcd}[column sep=small]
      x \ar[rr,"g"] \ar[dr,"f"'] && x'\ar[dl,"f'"] \\
      &c&
    \end{tikzcd}.
  \]
  If $g=g_2\circ g_1$, then we have the following decomposition in
  $\tr{C}{c}$,
  \[
      \begin{tikzcd}
      x \ar[rr,"g"] \ar[dr,"f"'] && x'\ar[dl,"f'"] \\
      &c&
    \end{tikzcd}
    =
    \begin{tikzcd}
      x \ar[r,"g_1"] \ar[dr,"f"']& y \ar[r,"g_2"] \ar[d,"h"] &
      x' \ar[dl,"f'"] \\
      &c&
    \end{tikzcd},
  \]
  with $h=f'\circ g_2$. This choice of $h$ is clearly unique, which
  proves that $\tr{C}{c} \to C$ is a discrete Conduché fibration.
\end{proof}

\subsection{Polygraphic resolutions of small categories}\label{subsec:polres}

From now on, we shall identify any small category with its image
in $\oCat$ by the inclusion functor \[\oinc 1\colon\nCat 1\to \oCat\]
defined in \ref{ssubsec:ncat}. Note that, for any $k\geq 1$, this
image belongs to $\opCat k$. 
\begin{paragr}\label{paragr:pullback_resol}
  Let $C$ be a small category, $1\leq k \leq \oo$, $X$ be an $\ook
  k$\nbd-category, and $p \colon X \to C$ be a morphism of $\ook k$\nbd-categories. For an object $c$ of $C$, we define the $\ook k$\nbd-category $\tr{X}{c}$ as the following pullback in $\opCat{k}$
  \begin{equation}
  \label{eq:pullback}
  \begin{tikzcd}
      \tr{X}{c} \ar[d] \ar[r] & \tr{C}{c} \ar[d] \\
       X \ar[r,"p"'] & C.
      \ar[from=1-1,to=2-2,phantom,"\lrcorner",very near start]
    \end{tikzcd}
  \end{equation} 
  Let us describe the $n$-cells of $\tr{X}{c}$
  for all $n\geq 0$:
  \begin{itemize}
  \item a $0$-cell of $\tr{X}{c}$ is a pair $\pair{x_0}{f}$,
    where $x_0\in X_0$, $f\colon p_0x_0\to c\in (\tr Cc)_0$;
  \item for $n>0$, an $n$-cell of $\tr{X}{c}$ amounts to a
    pair $\pair{x_n}{f}$ where  $x_n\in X_n$ and $f:\tge 0
    (p_nx_n)\to c$.
  \end{itemize}
The construction of $\tr{X}{c}$ is functorial in $c$, thus defining a functor
  \[
    \begin{aligned}
      \tr{X}{-} \colon C &\to \opCat{k} \\
      c &\mapsto \tr{X}{c}.
    \end{aligned}
  \]
\end{paragr}
In the following theorem, we apply this construction in the case
$X=\frgp P$ for an $\ook k$-polygraph $P$ and $p \colon \frgp P \to C$
is a polygraphic resolution in $\opCat k$.
\begin{theorem}\label{thm:projresol}
    Let $C$ be a small category and $p:\frgp P\to C$ be a polygraphic
    resolution of $C$ in $\opCat k$, with $1\leq k\leq\oo$. The functor
    \[\tr{\frgp P\!}{-}:C\to \opCat k\]
    is a cofibrant object of the category $\funopCat{k}{C}$, equipped with the projective model structure.
\end{theorem}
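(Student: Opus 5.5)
The plan is to show that the functor $\tr{\frgp P\!}{-}$ is a \emph{cellular} object for the projective model structure. Concretely, it should be obtained from the initial functor by transfinite composition of pushouts of coproducts of the generating cofibrations $\coprod_{\homset{C}{c}{-}}\opifun k(\gcof n)$ of~\ref{paragr:projcof}.

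The first step is to apply \cref{thm:main} pointwise. By \cref{lemma:slicescond}, $\tr Cc\to C$ is a discrete Conduché fibration. Discrete Conduché fibrations are stable under pullback (\ref{paragr:properties_conduche}), so the map $\tr{\frgp P}{c}\to\frgp P$ in~(\ref{eq:pullback}) is again a discrete Conduché fibration. \cref{thm:main} then says that $\tr{\frgp P}{c}$ is free on the \ook k-polygraph whose $n$-generators are the $n$-cells lying over generators of $P$. By the description in~\ref{paragr:pullback_resol}, these are the pairs $\pair{a}{f}$ with $a\in P_n$ and $f\colon \tge 0(p_n a)\to c$ in $C$ (for $n=0$, $f\colon p_0a\to c$). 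So for each $n$ and $c$ there is a natural bijection
\[
  (\tr{\frgp P}{c})\text{'s } n\text{-generators} \;\cong\; \coprod_{a\in P_n}\homset{C}{c_a}{c},
\]
where $c_a=\tge 0(p_n a)$, or $p_0a$ when $n=0$. Naturality in $c$ comes from the fact that a morphism $g\colon c\to c'$ acts by postcomposition $\pair af\mapsto\pair a{gf}$, and this action sends generators to generators.

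The second step is to assemble these pointwise pushouts into pushouts in $\funopCat kC$. The skeleta $\skl n(\tr{\frgp P}{-})$ are functors of $c$, since $\skl n$ is a functor. For each $n$, the pushout square~(\ref{eq:polpushout}) for $\tr{\frgp P}{c}$ reads
\[
  \coprod_{a\in P_n}\coprod_{\homset{C}{c_a}{c}}\opifun k(\Sph{n-1})\to\coprod_{a\in P_n}\coprod_{\homset{C}{c_a}{c}}\opifun k(\Dsk n).
\]
I would check that this square is natural in $c$. The attaching map sends $\pair af$ to the boundary of the cell $\pair af$, which is the boundary of $a$ together with the appropriate composite of $f$. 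Functoriality in $c$ is again given by postcomposition with $g$, so the attaching maps are natural transformations. Colimits in the functor category are computed pointwise, so each $\skl{n-1}\to\skl n$ is a pushout in $\funopCat kC$ of $\coprod_{a\in P_n}$ of the generating cofibration associated to the object $c_a$. Moreover $\tr{\frgp P}{-}$ is the colimit of its skeleta, again pointwise by~\ref{paragr:opolpushout}. Starting from the initial functor, we get a cell complex, hence a cofibrant object.

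The main obstacle is the naturality check in the second step. One has to verify that the identification of generators given by \cref{thm:main} is compatible with the functoriality of $c\mapsto\tr{\frgp P}{c}$. One also has to verify that the boundary of a generator $\pair af$ really is $\pair{\partial a}{f'}$, where $f'$ is determined by $f$ and depends on $f$ only through postcomposition. This reduces to the remark that the $0$-target of every cell in the boundary of $a$ is either $\tge0 a$ or maps into it: for $n\ge1$ the target components are $\tge0(p\,\cdot)$ and the $f$-component is carried along unchanged. The one delicate case is $n=1$, where the source $0$-cell gets $f\circ p_1(a)$ rather than $f$. Even there the dependence on $f$ is through postcomposition only, so naturality holds. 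I would handle this case explicitly and treat the case $n\geq 2$ uniformly.
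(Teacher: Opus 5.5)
Your proposal is correct and follows essentially the same route as the paper. You pull back the discrete Conduché fibration $\tr Cc\to C$ and apply \cref{thm:main} to get the generators $\pair af$ with $f\colon\tge 0(p_n\gni n a)\to c$. You then observe that the skeletal pushout squares~(\ref{eq:polpushout}) are natural in $c$, and decompose the left map as $\coprod_{a\in P_n}\coprod_{\homset{C}{c_a}{-}}\opifun k(\gcof n)$, which exhibits $\tr{\frgp P\!}{-}$ as a cell complex in the projective model structure. Your explicit check that the attaching maps depend on $f$ only through postcomposition, including the $n=1$ case, fills in the naturality claim that the paper only asserts.
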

  \begin{proof}
   Let $c$ be an object of $C$ and consider the
   square~(\ref{eq:pullback}). By~\cref{lemma:slicescond}, the
   morphism $\tr Cc\to C$ is a discrete Conduché fibration. By stability of the class of discrete Conduché fibrations under pullback, the morphism $\tr{\frgp P\!}c\to \frgp
   P$ is also a discrete Conduché fibration. As $P$ is an $\ook
   k$-polygraph, it follows from \cref{thm:main} and the description
   of the cells of $\tr{\frgp P\!}c$
   that $\tr{\frgp P\!}c$ is itself freely generated by an $\ook k$\nbd-polygraph $\tr Pc$ whose set of $0$-generators is
   \[
     (\tr Pc)_0=\setof{\pair{a}{f}}{a\in P_0\ \hbox{and}\  f:p_0\gni 0 a\to c},
   \]
   and whose set of $n$-generators for $n>0$ is
   \[
     (\tr Pc)_n=\setof{\pair{a}{f}}{a\in P_n\ \hbox{and}\  f:\tge 0(p_n\gni n a)\to c},
   \]
   where $\gni n$ inserts the $n$-generators in the $n$-cells of
   $\frgp P$ (see~\ref{paragr:okpol}). Thus, we have a canonical isomorphism
   $\tr{\frgp{P}\!}c\cong \frgp{(\tr Pc)}$.

   Using this isomorphism, the pushout square~(\ref{eq:polpushout})
   of~\ref{paragr:opolpushout} applies to the \ook k-polygraph $\tr
   Pc$ and yields a pushout
  \[
    \begin{tikzcd}[row sep=large]
           \displaystyle\coprod_{{(\tr Pc)}_{n}} \opifun k (\Sph{n-1})
     \ar[d,"\coprod_{{(\tr Pc)}_{n}}\opifun
     k(\gcof{n})"',start anchor={[yshift=2.8ex]}]\ar[r]&\skl{n-1}(\tr{\frgp{P}}{c})\ar[d]\\
           \displaystyle\coprod_{{(\tr Pc)}_{n}}\opifun k(\Dsk{n})\ar[r]& \skl n{(\tr{\frgp{P}}{c})}
           \ar[from=1-1,to=2-2,phantom,"\ulcorner",very near end]
         \end{tikzcd}
  \]
  in $\opCat k$ for all $n\geq 1$. All the morphisms in the above
  square are natural in $c$, so that we get corresponding pushout
  squares in~$\funopCat{k}{C}$:
    \[
    \begin{tikzcd}[row sep=large]
           \displaystyle\coprod_{{(\tr P-)}_{n}} \opifun k (\Sph{n-1})
     \ar[d,"\coprod_{{(\tr P-)}_{n}}\opifun
     k(\gcof{n})"',start anchor={[yshift=2.8ex]}]\ar[r]&\skl{n-1}(\tr{\frgp{P}}{-})\ar[d]\\
           \displaystyle\coprod_{{(\tr P-)}_{n}}\opifun k(\Dsk{n})\ar[r]& \skl n{(\tr{\frgp{P}}{-})}.
           \ar[from=1-1,to=2-2,phantom,"\ulcorner",very near end]
         \end{tikzcd}
  \]
      Using the above description of the generators of $\tr P-$, the left
  map decomposes as
  \[
    \coprod_{{(\tr P-)}_{n}}\opifun k(\gcof {n})=\coprod_{a\in
      P_n}\coprod_{f\in\homset{C}{\tge 0(p_n\gni na)}{-}}\opifun k(\gcof {n})
  \]
  which is a coproduct of generating cofibrations
  in~$\funopCat{k}{C}$ by~\ref{paragr:projcof}, hence a cofibration. As cofibrations are
  stable by pushout, the maps
  \[
    \skl{n-1}(\tr{\frgp{P}\!}{-})\to \skl n(\tr{\frgp{P}\!}{-})
\]
   are cofibrations. Finally, the functor
  $\tr{\frgp{P}\!}{-}$ is the colimit of the sequence of cofibrations
  \[
    \begin{tikzcd}[column sep=small]
      0\ar[r] &\skl 0(\tr{\frgp{P}\!}{-})\ar[r] &\cdots\ar[r]
      &\skl{n-1}(\tr{\frgp{P}\!}{-})\ar[r]&
      \skl n(\tr{\frgp{P}\!}{-})\ar[r] &\cdots
    \end{tikzcd}
  \]
  and therefore a cofibrant object in the projective model
  structure. Here $0$ denotes the constant functor taking each object
  of $C$ to the empty \ook k-category, which is the initial object of
  $\funopCat{k}{C}$.
\end{proof}
\subsubsection{Groupoids}\label{ssubsec:resol_grp}
Let $G$ be a groupoid, that is, a small category where all arrows are
isomorphisms. For each
object $c$ of $G$, the slice category $\tr Gc$ is still a groupoid. Consequently,
all the results of~\cref{subsection:slice_cat} hold when restricted to the
subcategory $\Grpd=\npCat 10$ of groupoids. Moreover $G$ can be
considered as an object of $\opCat 0$, and more generally as an object of $\opCat k$ for any $0\leq k \leq \oo$, via the canonical inclusion $\Grpd \to \opCat k$. We may therefore
define polygraphic resolutions of $G$ in $\opCat k$ as pairs $\pair
Pp$ where $P$ is an $\ook
k$-polygraph and  $p:\frgp P\to G$ a trivial fibration
in the folk model structure on $\opCat k$ for any $k\geq 0$.  Thus, we may state a
groupoidal version of~\cref{thm:projresol}.
\begin{theorem}\label{thm:projresol_grp} Let $G$ be a groupoid and
$p:\frgp P\to G$ be a polygraphic resolution of $G$ in $\opCat k$, with
$0\leq k\leq \oo$. The functor
    \[\tr{\frgp P\!}{-}:G\to \opCat k\] is a cofibrant object of the
category $\funopCat{k}{G}$, equipped with the projective model
structure.
\end{theorem}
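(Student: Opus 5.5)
The plan is to repeat the proof of \cref{thm:projresol} verbatim, checking that each ingredient still holds when $C$ is replaced by the groupoid $G$ and $k$ may be $0$. The only place where $k\geq 1$ was used is that a small category lies in $\opCat k$. For a groupoid $G$ this holds for every $0\leq k\leq\oo$, because all its $1$-cells are invertible. The same is true of each slice $\tr Gc$, which is again a groupoid. Hence the pullback square~(\ref{eq:pullback}) can be formed in $\opCat k$ with $X=\frgp P$ and $p:\frgp P\to G$. Since $\opCat k$ is closed under limits in $\oCat$, being coreflective via $\omaxgr k$ and in fact closed under pullbacks directly, this pullback is computed as in $\oCat$. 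The description of the cells of $\tr{\frgp P\!}c$ given in~\ref{paragr:pullback_resol} therefore remains valid.

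First, \cref{lemma:slicescond} gives that $\tr Gc\to G$ is a discrete Conduché fibration. Its proof only involves factorizations of morphisms in $G$, so nothing changes. By stability under pullback (\ref{paragr:properties_conduche}), the morphism $\tr{\frgp P\!}c\to\frgp P$ is a discrete Conduché fibration between \ook k-categories. Next, \cref{thm:main}, which is stated for all $0\leq k\leq\oo$ and in particular for $k=0$, shows that $\tr{\frgp P\!}c$ is free on the \ook k-polygraph $\tr Pc$. Its generators are the pairs $\pair af$ with $a\in P_n$ and $f:\tge 0(p_n\gni na)\to c$ in $G$, or $f:p_0\gni 0a\to c$ when $n=0$.

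Then the skeletal pushouts~(\ref{eq:polpushout}) for $\tr Pc$ are natural in $c$, and they assemble into pushouts in $\funopCat kG$. The left vertical map decomposes as $\coprod_{a\in P_n}\coprod_{\homset G{\tge0(p_n\gni na)}{-}}\opifun k(\gcof n)$, a coproduct of the generating projective cofibrations of~\ref{paragr:projcof}. The transfinite composite of these cofibrations, starting from the initial functor, is $\tr{\frgp P\!}{-}$, which is therefore cofibrant.

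The only point I expect to need care is naturality in $c$. A morphism $g:c\to c'$ of $G$ acts on generators by $\pair af\mapsto\pair a{gf}$, so it sends generators to generators. This is what makes the pushout squares, and the identification of the generator sets with representables, functorial. The argument is identical to the category case, and the case $k=0$ introduces no new difficulty beyond checking that every construction stays inside $\opCat 0$.
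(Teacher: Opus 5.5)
Your proposal is correct and follows the paper's own argument: the paper simply says that the proof of \cref{thm:projresol} goes through unchanged with $k=0$ allowed, since $G$ and its slices $\tr Gc$ are groupoids and \cref{thm:main} holds for all $0\leq k\leq\oo$. One minor slip: $\opCat k$ is closed under limits in $\oCat$ because of the left adjoint $\opifun k$ (so $\oemb k$ is a right adjoint), not because of coreflectivity via $\omaxgr k$, which only gives closure under colimits; your direct check that pullbacks stay in $\opCat k$ already makes this harmless.
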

\begin{proof} The proof is the same as that of~\cref{thm:projresol},
  with $k=0$ allowed.
\end{proof}
\begin{remark} Note that the proofs of Theorems~\ref{thm:projresol}
  and~\ref{thm:projresol_grp} do not use the acyclicity of $p$, only the
  fact that $\frgp P$ is freely generated, together with the properties
  of the slice projection established before. However the statements are
  interesting only in the case where $\frgp P$ is a cofibrant
  replacement of $G$.
\end{remark}
  

%% file: polhom.tex
\section{Polygraphic homology}\label{sec:polhom}
\subsection{Abelianization of \texorpdfstring{$\ook k$}{(ω,k)}-categories}\label{subsec:abel}
\begin{paragr}
  The {\em abelianization functor}
  \[\abel\colon\oCat \to \Chp \]
  from the category of $\oo$\nbd-categories to
  the category of chain complexes in non-negative degree is defined as follows. For an $\oo$\nbd-category $C$ and $n\geq 0$
  \[
    \abel_n(C):=\Z C_n/\sim_n,
  \]
  where $\Z C_n$ is the free abelian group on the set of $n$\nbd-cells
  of $C$ and $\sim_n$ is the smallest equivalence relation compatible with
  the abelian group structure such that $x\comp{i} y
  \sim _n x + y$ for any $x,y\in C_n$ that are
  $\comp{i}$-composable. For $x\in C_n$, we denote by $[x]$ its
  equivalence class in $\abel_n(C)$. The boundary operator is defined by
  \[
    \begin{aligned}
      \partial \colon \abel_{n+1}(C) &\to \abel_{n}(C)\\
      [x] &\mapsto [\target x] - [\source x].
    \end{aligned}
  \]
  One checks that this is well-defined, that $\partial
  \circ \partial=0$, and that the correspondence $C \mapsto
  \abel(C)$ is functorial in the evident
  way. Furthermore, $\abel$ admits a right adjoint. For
  details, we refer to \cite[22.1]{abgmmm:polybk}.

  By pre-composing with the inclusion functor, for any $k\geq 0$, we
  also get an abelianization functor
  \[
    \abelk{k} \colon \opCat{k} \overset{\oemb{k}}{\longrightarrow} \oCat \overset{\abel}{\longrightarrow} \Chp.
  \]
  In particular, for $k=\oo$, we have $\abel=\abelk{\oo}$ by
  definition. Notice that since $\oemb{k}$ admits a right adjoint, so
  does $\abelk{k}$.
  If $C$ is freely generated by an \ook k-polygraph, the complex
  $\abelk k(C)$ admits a simple, explicit description, based on the following
lemma.
\end{paragr}

\begin{lemma}\label{lemma:delooping}
  Let $P$ be an $\ook k$\nbd-polygraph and $C=\frgp P$ be the free $\ook
  k$\nbd-category on this polygraph. Let $n > 0$, $G$ be an
  abelian group and 
  \[
    f \colon P_n \to G,
  \]
  be a map.
  Then there is a unique map
  \[\extend f\colon C_n\to G\]
 satisfying the following conditions:
 \begin{enumerate}
  \item  for all $x\in P_n$, $\extend{f}(\geninc_n x)=f(x)$;
  \item $\extend{f}(x \comp{i} y)=\extend{f}(x)+\extend{f}(y)$, for any $i$\nbd-composable pairs 
    $(x,y)$ of $n$\nbd-cells of $C$ with $i<n$;
  \item $\extend f(\unit n(z))=0$, for any $(n-1)$\nbd-cell $z$.
  \end{enumerate}
\end{lemma}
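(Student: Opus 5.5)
Uniqueness is immediate once we know that every $n$-cell of $C=\frgp P$ is generated from generators, identities and (for $n>k$) inverses of generators by the compositions $\comp i$. Indeed, by \cref{prop:univprop} (and its restricted version for $n\leq k$, \cref{remark:ncat}), $C_n$ is the set $\wtxeq{P_n}$ of classes of well-typed formulas over the cellular extension $\clx{\trk{n-1}C}{P_n}$. Any $\extend f$ satisfying (1)--(3) is determined on such formulas by structural induction.

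A generator contributes $f(x)$ and a unit contributes $0$. A composite, for $i<n$, contributes the sum of the values of its two factors. For $i=n-1$ this is also the case, since condition (2) covers all $i<n$, in particular $i=n-1$. Finally, a formal inverse $\finv a$ must be sent to $-f(a)$: from $\gni n(a)\comp{n-1}\inv{\gni n(a)}=\unit n(\sce{n-1}a)$, conditions (2) and (3) force $\extend f(\inv{\gni n a})=-f(a)$.

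For existence, I will build the extension at the level of formulas and check that it descends to the quotient. The first step is to view the abelian group $G$ as an $\pair{n}{k}$-category. Concretely, let $D$ be the $n$-category with a single $i$-cell for $i<n$ and $D_n=G$, with every $\comp i$ given by addition and units equal to $0$. The exchange law holds because $G$ is commutative, and every $n$-cell is invertible, so $D$ is an $\pair nk$-category for every $k$. (When $n\leq k$ we can simply regard $D$ as a plain $n$-category.)

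The second step is to apply the universal property. Take the unique morphism $\trk{n-1}C\to\trk{n-1}D$, which exists because $\trk{n-1}D$ is terminal. The map $f\colon P_n\to G=D_n$ automatically satisfies the source and target compatibility conditions, since $D_{n-1}$ is a point. So \cref{prop:univprop} (respectively the free $n$-category universal property $\frf{n-1}$ for $n\leq k$) yields a morphism $\phi\colon C\to D$ extending $f$ on generators. We then set $\extend f=\phi_n$.

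Since $\phi$ is a morphism of $\oo$-categories, conditions (1)--(3) follow directly: it preserves compositions (which become sums in $D$), it preserves units (which are $0$ in $D$), and it sends $\gni n x$ to $f(x)$ by construction.

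The only delicate point is to check that $D$ really is an $n$-category, in particular the exchange rule and the unit axioms with the degenerate lower-dimensional structure. Both are immediate from the commutativity of $G$ (this is the Eckmann--Hilton situation). The uniqueness argument also needs the surjectivity of $\cans{P_n}$, which holds by construction. Alternatively, uniqueness follows directly from the uniqueness clause of the universal property, because any $\extend f$ satisfying (1)--(3), together with the trivial lower-dimensional data, is itself a morphism $\trk n C\to D$.
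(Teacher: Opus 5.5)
Your proposal is correct and matches the paper's proof. The paper also regards $G$ as the $n$-groupoid $\B^nG$ (your $D$), views $f$ as a morphism of cellular extensions $\clx{\trk{n-1}C}{P_n}\to\fgf{n-1,k}(\B^nG)$, and gets both existence and uniqueness of $\extend f$ from the universal property in \cref{prop:univprop}. Your extra structural-induction argument for uniqueness, including $\extend f(\inv{\gni n a})=-f(a)$, is correct but redundant given the uniqueness clause you also invoke.
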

\begin{proof}
 For the sake of clarity, we suppose that $n>k$. The proof immediately
 adapts to the case $n\leq k$.
Consider the $n$\nbd-groupoid $\B^nG$,
whose set of $i$\nbd-cell is a singleton for $i<n$ and $G$ for $i=n$,
and all composition operations are given by the group
operation. By definition, $\B^nG$ is an \pk{n}{k}-category. The map
$f\colon P_n\to G$ amounts to a morphism of cellular extensions
\[\clx{\trk{n-1}C}{P_n}\to \fgf{n-1,k}(\B^nG).\]
By the universal property of~\cref{prop:univprop}, this induces a
unique map
\[
  \extend f\colon C_n\to G
  \]
satisfying the conditions (1),
(2) and (3).  
\end{proof}
\begin{paragr}
  Let $P$ be an \ook{k}-polygraph, $C=\frgp P$, and $n>0$. Consider
  \[f\colon P_n\to \Z P_n\]
  the canonical injection. By \cref{lemma:delooping} there is a unique
  map $\extend f\colon C_n\to \Z P_n$ satisfying the conditions (1),
  (2) and (3) of the lemma. In particular $\extend f$ is compatible with 
  the relation $\sim_n$ and we get a unique $\Z$-linear map
  \[u\colon \abelk{k}_n(C)\to \Z P_n\]
  such that the following diagram commutes
  \[
    \begin{tikzcd}
      C_n \ar[rd,"\extend{f}"description]\ar[r,"s"]& \abelk{k}_n(C)\ar[d,"u"]\\
      P_n \ar[r,"f"'] \ar[u,"\geninc_n"]& \Z P_n
    \end{tikzcd},
  \]
  with $s:x\mapsto [x]$ the canonical map. On the other hand, by the
  universal property of $\Z P_n$, the map $g=s\circ\geninc_n$ has a unique
  $\Z$-linear extension
  \[v\colon \Z P_n\to\abelk{k}_n(C),\]
  that is, $vf=g$. Note that $s$ is then precisely $\extend g$ as
  defined in~\cref{lemma:delooping}. Now $uvf=us\geninc_n=\extend{f}\geninc_n=f$, which implies that $uv=\Unit{\Z P_n}$. Also
  $vus\geninc_n=v\extend{f}\geninc_n=vf=s\geninc_n$. By the uniqueness
  part of \cref{lemma:delooping}, this implies $vus=s=\extend{g}$,
  whence $vu=\Unit{\abelk{k}_n(C)}$. This proves the following lemma.
 \end{paragr}
\begin{lemma}\label{lemma:abelpoly}
  The morphism of abelian groups $u \colon \abelk{k}_n(C) \to \Z P_n$ is an
  isomorphism, with inverse $v \colon \Z P_n \to \abelk{k}_n(C)$.
\end{lemma}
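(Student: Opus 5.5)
The plan is the standard two-sided inverse argument. The paragraph preceding the statement already sets up both maps: $u$ is obtained by factoring $\extend f$ through $s$, and $v$ is the $\Z$-linear extension of $g=s\circ\geninc_n$. We must check $uv=\Unit{\Z P_n}$ and $vu=\Unit{\abelk{k}_n(C)}$.

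First, $u$ is well defined. The relation $\sim_n$ is generated by $x\comp i y\sim_n x+y$ with $i<n$, and condition (2) of \cref{lemma:delooping} gives $\extend f(x\comp i y)=\extend f(x)+\extend f(y)$. So the $\Z$-linear extension of $\extend f$ to $\Z C_n$ kills the generators of $\sim_n$ and descends to $\abelk{k}_n(C)$.

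For $uv=\Unit{\Z P_n}$, both sides are $\Z$-linear maps on the free abelian group $\Z P_n$, so it suffices to compare them on basis elements $f(x)$ with $x\in P_n$. There $uvf(x)=us\geninc_n(x)=\extend f\geninc_n(x)=f(x)$ by condition (1).

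For $vu=\Unit{\abelk{k}_n(C)}$, since $s$ is surjective onto a generating set, it suffices to prove $vus=s$. This is where the uniqueness in \cref{lemma:delooping} enters, applied to the abelian group $G=\abelk{k}_n(C)$ and the map $g$. We check that both $s$ and $vus=v\extend f$ satisfy conditions (1)--(3) for $g$:
\begin{itemize}
\item For $s$: condition (1) holds by definition, (2) is the relation $\sim_n$, and (3) holds because $\unit n(z)\comp{n-1}\unit n(z)=\unit n(z)$ forces $[\unit n(z)]=2[\unit n(z)]$, hence $[\unit n(z)]=0$.
\item For $v\extend f$: conditions (2) and (3) follow from those of $\extend f$ by linearity of $v$, and (1) holds since $v\extend f\geninc_n=vf=g$.
\end{itemize}
Uniqueness then gives $vus=s$.

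The one point needing care is the use of $\comp{n-1}$ in condition (3) for $s$. \cref{lemma:delooping} states (2) only for $i<n$, which includes $i=n-1$, so this is consistent. No step is a real obstacle: all the substance sits in \cref{lemma:delooping}, which is assumed.
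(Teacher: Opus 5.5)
Your proof is correct and follows the paper's argument. You check $uv=\Unit{\Z P_n}$ on the basis $P_n$, and you obtain $vu=\Unit{\abelk{k}_n(C)}$ from $vus=s$ using the uniqueness clause of \cref{lemma:delooping} for $g=s\circ\geninc_n$. The only difference is that you explicitly verify that $s$ satisfies conditions (1)--(3), in particular $[\unit n(z)]=0$ via $\unit n(z)\comp{n-1}\unit n(z)=\unit n(z)$, whereas the paper simply asserts $s=\extend g$.
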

\begin{paragr}
  Let $k \geq 0$, $P$ be an \ook k-polygraph and $C=\frgp P$ be the free
  \ook k-category on $P$. We denote by $\pair{\Z P}{\partial}$ the chain complex
  \[
    \Z P_0 \overset{\partial_0}{\longleftarrow} \Z P_1
    \overset{\partial_1}{\longleftarrow} \dots,
    \]
  where $\partial_n\colon \Z P_{n+1}\to \Z P_n$ takes a generator $x\in
  P_{n+1}$ to
  \[\partial_n x=u([\target\geninc_{n+1} x])-u([\source\geninc_{n+1}
    x]),\]
  using the isomorphism $u$ of \cref{lemma:abelpoly}. 
\end{paragr}
\begin{proposition}\label{prop:isocomplex}
  The chain complex $\abelk{k}(C)$
  is canonically isomorphic to~$\pair{\Z P}{\partial}$.
  \end{proposition}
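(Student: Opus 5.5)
The plan is to show that the degreewise isomorphisms $u_n\colon \abelk{k}_n(C)\to \Z P_n$ of \cref{lemma:abelpoly} commute with the differentials. Once that holds, $u=(u_n)_{n\geq 0}$ is an isomorphism of chain complexes $\abelk{k}(C)\to\pair{\Z P}{\partial}$.

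First I would handle degree $0$. Here $C_0=P_0$ and no composition of $0$-cells is involved, so $\abelk{k}_0(C)=\Z C_0=\Z P_0$ and I take $u_0$ to be the identity. Since \cref{lemma:delooping} is only stated for $n>0$, this case has to be set aside explicitly.

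Next, the chain map condition $u_n\circ\partial^{\abel}=\partial_n\circ u_{n+1}$ on $\abelk{k}_{n+1}(C)$. Both sides are $\Z$-linear. Since $v_{n+1}$ is an inverse of $u_{n+1}$, the group $\abelk{k}_{n+1}(C)$ is generated by the classes $[\geninc_{n+1}x]$ for $x\in P_{n+1}$, so it suffices to check the condition on these classes. On the left we get $u_n[\target\geninc_{n+1}x]-u_n[\source\geninc_{n+1}x]$. On the right, $u_{n+1}[\geninc_{n+1}x]=\extend f(\geninc_{n+1}x)=x$, and then $\partial_n x$ is the same expression by its very definition. So commutation is immediate once $\partial_n$ is known to be well defined and $\partial^{\abel}$ is the boundary of \ref{subsec:abel}; the latter is taken from \cite[22.1]{abgmmm:polybk}. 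As a consequence, $\partial\circ\partial=0$ on $\Z P$ is inherited through the isomorphism, so $\pair{\Z P}{\partial}$ really is a chain complex.

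Canonicity means that $u$ is determined by $P$ alone, with no choices made. Naturality with respect to morphisms of polygraphs would follow from the uniqueness clause of \cref{lemma:delooping}: for a morphism $P\to P'$, the two composites $C_n\to\Z P'_n$ satisfy conditions (1)–(3) and agree on generators, so they coincide.

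The main obstacle I anticipate is only bookkeeping. One point is the degree-$0$ case just mentioned. The other is verifying that $u$ commutes with $\partial$ as opposed to merely being defined degreewise; but since $\partial_n$ was defined by transporting along $u$, this reduces to the generator computation above, and no real difficulty is expected.
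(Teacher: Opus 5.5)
Your proposal is correct and is the paper's argument written out in full. Degree $0$ is the identity $\abelk{k}_0(C)=\Z P_0$, and in positive degrees the isomorphisms $u$ of \cref{lemma:abelpoly} commute with the boundaries essentially by the definition of $\partial$, which you check on the generating classes $[\geninc_{n+1}x]$. Your extra remarks are also valid and are left implicit in the paper: that $\partial\circ\partial=0$ on $\Z P$ follows by transport along $u$, and that naturality in $P$ follows from the uniqueness clause of \cref{lemma:delooping}.
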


\begin{proof}
  By construction $\abelk{k}_0(C)=\Z P_0$. The rest follows
  from \cref{lemma:abelpoly}.
\end{proof}
\begin{paragr}\label{paragr:abeldisk}
  
  As an application of~\cref{prop:isocomplex}, for $k=\oo$ and $n\geq
  0$,  we obtain the following description of the chain complex $\abel(\Dsk{n})$.

  \[
    \abel_i(\Dsk{n})\cong \begin{cases}
      \Z\oplus\Z &\text{ if } 0 \leq i < n,\\
      \Z &\text{ if } i=n,\\
        0 &\text{ if } i>n. \end{cases}
  \]
  For $n>0$ the differential $\partial
  \colon \abel_i(\Dsk{n}) \to \abel_{i-1}(\Dsk{n})$ sends $1$ to
  $(1,-1)$ for $i=n$ and is given by the matrix $\begin{pmatrix} 1 & 1
    \\ -1 & -1 \end{pmatrix}$, 
  for $1 \leq i < n$.

  For $\Sph{n}$, we get $\abel_{n+1}(\Sph{n})=0$ and, for all $i\neq n+1$, $\abel_i(\Sph{n})=\abel_i(\Dsk{n+1})$. 
\end{paragr}

\begin{paragr}\label{paragr:abelloc}
  Let $0 \leq k < \oo$. By definition,
  we have a commutative triangle
  \[
    \begin{tikzcd}[column sep=small]
      \opCat{k}\ar[rr,"\oemb{k}"]\ar[rd,"\abelk{k}"']&& \oCat\ar[dl,"\abel"] \\
      &\Chp.&
  \end{tikzcd}
\]
Now the triangle
 \[
    \begin{tikzcd}[column sep=small]
      \opCat{k}\ar[rd,"\abelk{k}"']&& \oCat\ar[dl,"\abel"] \ar[ll,"\opifun{k}"']\\
      &\Chp&
  \end{tikzcd}
\]
does not {\em a priori} commute. However, it commutes up to a natural
isomorphism, as shown below.
\end{paragr}
\begin{lemma}\label{lemma:abelpi}
  The functors $\abel$ and $\abelk{k}\opifun{k}$ are naturally isomorphic.
\end{lemma}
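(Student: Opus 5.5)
The plan is to compare right adjoints. Since $\abel$ and $\abelk k$ both have right adjoints, and $\opifun k \dashv \oemb k$, the composite $\abelk k\opifun k$ is left adjoint to $\oemb k\circ R_k$, where $R_k$ denotes the right adjoint of $\abelk k$. Let $R$ denote the right adjoint of $\abel$. By uniqueness of adjoints, it suffices to show that $R$ factors through $\opCat k$. Concretely, we will show that for every chain complex $K$ the \oo-category $R(K)$ is an \ook k-category, indeed an \ook 0-category. Once this holds, $R\cong\oemb k R'$ for some functor $R'$ into $\opCat k$. Using the identity $\abelk k=\abel\oemb k$ and full faithfulness of $\oemb k$, we get
\[
  \homset{\Chp}{\abelk k X}{K}\cong\homset{\oCat}{\oemb k X}{\oemb k R'K}\cong \homset{\opCat k}{X}{R'K},
\]
so $R'\cong R_k$ and $R\cong\oemb kR_k$. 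Passing to left adjoints then yields $\abel\cong\abelk k\opifun k$, naturally.

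To check that $R(K)$ is an \oo-groupoid, I will use the explicit description of $R$ from \cite[22.1]{abgmmm:polybk}, or equivalently compute it via $\homset{\oCat}{\Dsk n}{R K}\cong\homset{\Chp}{\abel\Dsk n}{K}$ together with \ref{paragr:abeldisk}. Up to this identification, an $n$-cell of $R(K)$ is a family $(x_i^-,x_i^+)_{i<n}$ with $x_i^\pm\in K_i$, plus $x_n\in K_n$, subject to the boundary conditions $\partial x_n=x_{n-1}^+-x_{n-1}^-$ and $\partial x_i^\pm=x_{i-1}^+-x_{i-1}^-$. Composition $\comp i$ adds the components in degrees $>i$ and keeps the outer boundary data. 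Given an $n$-cell $x$ with $n>0$, its candidate inverse $x'$ swaps $x_{n-1}^\pm$ and sends $x_n$ to $-x_n$. The identities $x\comp{n-1}x'=\unit n(\sce{n-1}x)$ and $x'\comp{n-1}x=\unit n(\tge{n-1}x)$ then reduce to $x_n+(-x_n)=0$ in the top component. So every cell of positive dimension is invertible, and $R(K)$ lies in $\opCat 0\subseteq\opCat k$.

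I expect the main obstacle to be pinning down the structure of $R$ precisely enough to verify invertibility, without re-deriving the whole adjunction. If appealing to the literature is not clean enough, there is an alternative. For an arbitrary \oo-category $C$, one shows directly that the unit $C\to\oemb k\opifun k C$ induces an isomorphism after applying $\abel$. The key observation is that $\abel$ sends formally adjoined inverses to negatives: from $x\comp{n-1}\inv x=\unit n(\cdot)$ and $[\unit n(\cdot)]=0$, which follows from $x\comp i y\sim_n x+y$ applied to $\unit n z\comp{n-1}\unit n z=\unit n z$, we get $[\inv x]=-[x]$. The inverse map $\abel_n(\opifun kC)\to\abel_n(C)$ would then be defined on cells of $\opifun kC$ by such a sign rule. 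However, this requires a presentation of $\opifun kC$, which is exactly why I would first try the adjoint argument: it needs only the observation, made in the first paragraph, that $\abel$ lands in groupoidal data.
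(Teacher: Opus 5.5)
Your proposal is correct and follows essentially the same route as the paper. Both arguments show that the right adjoint of $\abel$ lands in \oo-groupoids and factor it through $\opCat k$. They then identify the factorization as the right adjoint of $\abelk k$ using full faithfulness of $\oemb k$, and conclude by uniqueness of left adjoints. The only difference is that you sketch an explicit invertibility check on cells of the right adjoint where the paper simply cites \cite[17.4.3]{abgmmm:polybk}, so your fallback argument is unnecessary.
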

\begin{proof}
The abelianization functor $\abel$ admits a right adjoint $\mu \colon
\Chp \to \oCat$. For any chain complex $K$, the
\oo-category $\mu(K)$ is in fact an \oo\nbd-groupoid (see \cite[Paragraph
  17.4.3]{abgmmm:polybk}). Therefore, $\mu$ factors uniquely through  $\opCat{k}$ as
  \[
    \Chp \overset{\mu^k}{\longrightarrow} \opCat{k}
    \overset{\oemb{k}}{\longrightarrow} \oCat.
  \] 
  For any $C\in\opCat{k}$ and $K\in\Chp$
  \begin{align*}
    \homset{\Chp}{\abelk{k}(C)}{K} &\cong
                                     \homset{\Chp}{\abel\oemb{k}(C)}{K}\\
                                                     &\cong
                                                       \homset{\oCat}{\oemb{k}(C)}{\mu(K)}\\
                                                     & \cong
                                                       \homset{\oCat}{\oemb{k}(C)}{\oemb{k}\mu^k(K)}\\
                                   &\cong \homset{\opCat{k}}{C}{\mu^k(K)}
                                   \end{align*}
  as $\oemb{k}$ is fully faithful. Hence $\abelk{k}$ is left adjoint
  to $\mu^k$. It follows that $\abelk{k}\opifun{k}$ is left adjoint to
  $\oemb{k}\mu^k=\mu$. Now $\abelk{k}\opifun{k}$ and
  $\abel$ are both left adjoint to the same functor $\mu$, whence
  they are naturally isomorphic.
\end{proof}

\subsection{Monoidal structure}\label{subsec:monoidal}

\begin{paragr}\label{paragr:gray}
  Recall that the category $\oCat$ admits a monoidal category
  structure with the so-called ``Gray tensor product'',
  which we simply denote by $\gray$, and with unit $\Dsk{0}$ (see, for example,
  \cite[Appendice A]{ara_maltsiniotis:joint}). More generally, for any $0 \leq k \leq \oo$, we have
  the induced tensor product $\grayk{k}$ on $\opCat{k}$
  (see~\cite[Section 6]{ara_lucas:folmon}) defined by the
  formula
  \[
    C \grayk{k} D :=\opifun{k}(\oemb{k}C\gray \oemb{k}D).
  \]
  
  Since the unit $\Dsk{0}$ is also the terminal object of $\opCat{k}$,
  for any \ook k\nbd-categories $C$ and $D$, there is a canonical
  morphism
  \[
    C\grayk{k} D \to C \times D,
  \]
  which is natural in $C$ and $D$. This is not an isomorphism unless
  $C$ or $D$ is a $0$\nbd-category (that is, a set).

  A pair $u, v \colon C \to D$ of morphisms in $\opCat{k}$ amounts to
  a morphism
  \[\pair{u}{v}\colon \Sph{0}\grayk{k}C\to D\]
  using the canonical isomorphisms
   \[
    \begin{aligned}
      \Sph{0}\grayk{k}C &=(\Dsk{0}\coprod \Dsk{0})\grayk{k}C \\
      &\cong \left(\Dsk{0}\grayk{k}C\right)\coprod
        \left(\Dsk{0}\grayk{k}C\right) \\
      &\cong C \coprod C.
    \end{aligned}
  \]
  Here the second line uses the fact that the Gray tensor product is
  bi-closed. On the other hand, $\opifun{k}(\Sph 0)=\Sph 0$ and the
  canonical inclusion $\gcof 1\colon\Sph 0\to \Dsk 1$ yields  a
  morphism $\Sph 0\to \opifun{k}(\Dsk 1)$. Thus we may set the
  following definition.
\end{paragr}
\begin{definition}\label{def:oplax}
  Let $u,v\colon C\to D$ be a pair of morphisms in $\opCat k$. An {\em
    oplax transformation} $\alpha\colon u \Rightarrow v$ is a morphism
$ \alpha \colon \opifun{k}(\Dsk{1})\grayk{k}C \to D$
  such that the following diagram commutes:
  \[
    \begin{tikzcd}[column sep=large]
      \Sph{0}\grayk{k} C \ar[d] \ar[dr,"{\left(u,v\right)}"] & \\
      \opifun{k}(\Dsk{1})\grayk{k}C \ar[r,"\alpha"'] & D. 
    \end{tikzcd}
  \]
\end{definition}
  
\begin{paragr}
Let $u,v\colon C\to D$ be morphisms in $\opCat k$, $\alpha\colon
u\Rightarrow v$ be an oplax transformation, and $f\colon D\to D'$ be a
morphism in $\opCat  k$.  The following composition
\[
 \opifun{k}(\Dsk{1}) \grayk{k} C \overset{\alpha}{\longrightarrow} D
  \overset{f}{\longrightarrow} D'.
\]
defines an oplax transformation $f\wsk\alpha\colon fu \Rightarrow fv$.

Similarly, if $g \colon C' \to C$ is a morphism in $\opCat{k}$, the composition
\[
  \begin{tikzcd}
    \opifun{k}(\Dsk{1})\grayk{k} C'
    \ar[rr,"\Unit{\opifun{k}(\Dsk{1})}\grayk{k}g"]&&
    \opifun{k}(\Dsk{1})\grayk{k} C\ar[r,"\alpha"']&
    D
  \end{tikzcd}
\]
defines an oplax transformation $\alpha \wsk g \colon ug \Rightarrow vg$.

Finally, we call \emph{natural transformation from $u$ to $v$} a morphism
\[
  \alpha \colon \opifun{k}(\Dsk{1})\times C \to D
\]
such that the following diagram is commutative
  \[
    \begin{tikzcd}[column sep=large]
      \Sph{0}\times C \ar[d] \ar[dr,"{\left(u,v\right)}"] & \\
      \opifun{k}(\Dsk{1})\times C \ar[r,"\alpha"'] & D. 
    \end{tikzcd}
  \]
  As before, we use here the canonical identification
  $\Sph{0}\times C\cong C \coprod C$. Note that, when $C$ and $D$ are
  $1$-categories, this coincides with the usual notion.

  Any  natural
  transformation $\gamma$ defines an oplax transformation given by the composition
\[
     \opifun{k}(\Dsk{1})\grayk{k}C \longrightarrow  \opifun{k}(\Dsk{1})\times C \overset{\gamma}{\longrightarrow} D.
  \]
\end{paragr}
\begin{proposition}\label{prop:abelmonoidal}
  For any $0 \leq k \leq \oo$, the abelianization functor
  \[
    \abelk{k} \colon \opCat{k} \to \Chp
  \]
  is strong monoidal with respect to the Gray tensor product $\grayk{k}$ on
  $\opCat{k}$ and the usual tensor product of chain complexes on $\Chp$.
\end{proposition}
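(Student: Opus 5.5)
The plan is to reduce to the case $k=\oo$, which we take as known: $\abel\colon\oCat\to\Chp$ is strong monoidal for the Gray tensor product (see~\cite[Appendice A]{ara_maltsiniotis:joint} and~\cite[22.1]{abgmmm:polybk}). Concretely, this gives a natural isomorphism $\abel(C\gray D)\cong\abel(C)\otimes\abel(D)$ and $\abel(\Dsk 0)\cong\Z$, compatible with the associativity and unit constraints. We then transport this structure along $\opifun k$ and $\oemb k$, using \cref{lemma:abelpi}.

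For \ook k-categories $C$ and $D$, we would write the chain of natural isomorphisms
\[
  \abelk k(C\grayk{k}D)=\abelk k\opifun k(\oemb k C\gray\oemb k D)\cong\abel(\oemb kC\gray\oemb kD)\cong\abel(\oemb k C)\otimes\abel(\oemb k D)=\abelk k(C)\otimes\abelk k(D).
\]
The first isomorphism is the one of \cref{lemma:abelpi}, the second is the strong monoidal structure of $\abel$, and the outer equalities hold by definition of $\grayk{k}$ and of $\abelk k$. For the unit, $\Dsk 0$ is already an \ook k-category and $\abelk k(\Dsk 0)=\abel(\Dsk 0)=\Z$.

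It remains to check the coherence axioms for associativity and unitality. The associator of $\grayk{k}$ is built in~\cite[Section 6]{ara_lucas:folmon} from the associator of $\gray$ and the canonical isomorphisms
\[
  \opifun k(\oemb k\opifun k(X)\gray Y)\cong\opifun k(X\gray Y),
\]
which come from the fact that $\opifun k$ is compatible with $\gray$, i.e.\ that $\opCat k$ is a reflective localization closed under the monoidal structure. So the coherence diagrams for $\abelk k$ reduce to two facts. First, the coherence diagrams for $\abel$. Second, the fact that the isomorphism $\abel\cong\abelk k\opifun k$ of \cref{lemma:abelpi} sends the unit $X\to\oemb k\opifun k X$ of the adjunction to an isomorphism, compatibly with the identification above. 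This second point follows because both sides are left adjoints to the same functor $\mu$, so the comparison map is induced by the adjunction unit.

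The main obstacle is this coherence bookkeeping, more precisely checking that the isomorphism of \cref{lemma:abelpi} is compatible with the structure maps of $\grayk{k}$. I would try first a uniqueness argument on left adjoints: every functor and isomorphism in sight arises from adjunctions with right adjoints $\mu$ and $\mu^k$, so each coherence square of isomorphisms between left adjoints is determined by its mate, and the mate can be checked on the $\oo$-side.

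Failing that, there is a direct route on free objects via \cref{prop:isocomplex}. Both sides preserve colimits in each variable: $\grayk{k}$ does because it is biclosed, and $\abelk k$ does because it has a right adjoint. So it suffices to check the coherence on the generators $\opifun k(\Dsk n)$. For these, $\abelk k(\opifun k\Dsk n)\cong\abel(\Dsk n)$, computed explicitly in~\ref{paragr:abeldisk}, and the isomorphism is the standard cellular one.
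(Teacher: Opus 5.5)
Your proposal is correct and follows essentially the same route as the paper: the case $k=\oo$ comes from Ara--Maltsiniotis, and the general case combines \cref{lemma:abelpi} with the fact that $\grayk{k}$ is induced from $\gray$ through the strong monoidal reflection $\opifun{k}$, exactly as in your chain of isomorphisms. The paper leaves the coherence verification implicit. Your observation that the isomorphism of \cref{lemma:abelpi} is $\abel$ applied to the unit $X\to\oemb{k}\opifun{k}X$ is what makes that step go through.
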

\begin{proof}
  The case $k=\oo$ is a consequence of~\cite[Proposition A.19]{ara_maltsiniotis:joint}. The
  general case follows from  $\abelk{k}\cong
  \abel\opifun{k}$ (\cref{lemma:abelpi}), together with the fact
  that $\abel$ and $\opifun{k}$ are strong monoidal.
\end{proof}
\begin{corollary}\label{cor:abeltrans}
  For any $0 \leq k \leq \oo$, the abelianization functor
    \[
    \abelk{k} \colon \opCat{k} \to \Chp
  \]
  sends oplax transformations to chain homotopies. 
\end{corollary}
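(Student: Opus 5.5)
Given an oplax transformation $\alpha\colon u\Rightarrow v$ between morphisms $u,v\colon C\to D$ in $\opCat k$, i.e.\ a morphism $\alpha\colon\opifun{k}(\Dsk 1)\grayk{k}C\to D$ restricting to $\pair uv$ on $\Sph 0\grayk{k}C\cong C\amalg C$, I would apply $\abelk k$ and use \cref{prop:abelmonoidal}. This gives a chain map
\[
\abelk k(\alpha)\colon \abelk k(\opifun{k}(\Dsk 1))\otimes\abelk k(C)\to \abelk k(D).
\]
Restricted along $\abelk k(\Sph 0)\otimes\abelk k(C)\to\abelk k(\opifun k(\Dsk 1))\otimes\abelk k(C)$, it equals $\pair{\abelk k(u)}{\abelk k(v)}$. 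For this identification I would use that the strong monoidal structure is compatible with the isomorphisms $\Sph0\grayk{k}C\cong C\amalg C$. Those isomorphisms come from the unit isomorphism and from $\grayk{k}$ commuting with coproducts, and $\abelk k$, being a left adjoint, preserves coproducts. So the restriction to the two copies of $\abelk k(C)=\Z\otimes\abelk k(C)$ is $\abelk k(u)$ and $\abelk k(v)$ respectively.

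Next I compute $\abelk k(\opifun k(\Dsk1))$. By \cref{lemma:abelpi} it is isomorphic to $\abel(\Dsk 1)$. By \cref{paragr:abeldisk} this is the complex $I$ with $I_0=\Z a\oplus\Z b$, $I_1=\Z e$ and $\partial e=b-a$ (up to the sign convention fixed there), where $a,b$ are the images of the two points of $\Sph 0$. I should check that the isomorphism of \cref{lemma:abelpi} is compatible with $\Sph0\to\opifun k(\Dsk1)$. This holds by naturality, since $\opifun k(\Sph0)=\Sph0$.

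The standard fact then finishes the proof. A chain map $H\colon I\otimes K\to L$ with $H(a\otimes x)=f(x)$ and $H(b\otimes x)=g(x)$ yields $h(x)=\pm H(e\otimes x)$, a homotopy between $f$ and $g$. Indeed, by the Leibniz rule $\partial(e\otimes x)=(b-a)\otimes x-e\otimes\partial x$, so $\partial h+h\partial=g-f$ after fixing signs. Taking $K=\abelk k(C)$ and $L=\abelk k(D)$ gives a chain homotopy $\abelk k(u)\simeq\abelk k(v)$.

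The only delicate point is the bookkeeping identifying the restriction of $\abelk k(\alpha)$ with $(\abelk k u,\abelk k v)$ through the monoidal isomorphisms. I would handle it by naturality of the structure maps of the strong monoidal functor with respect to $\Sph0\to\opifun k(\Dsk1)$, together with the unit coherence $\abelk k(\Dsk0)\cong\Z$.
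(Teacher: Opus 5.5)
Your proposal is correct and follows essentially the same route as the paper. You apply $\abelk{k}$ to the triangle defining the oplax transformation, use \cref{prop:abelmonoidal} and \cref{lemma:abelpi} to identify the source with $\abel(\Dsk{1})\otimes\abelk{k}(C)$, and read off the chain homotopy from the description of $\abel(\Dsk{1})$ in \ref{paragr:abeldisk}. Your extra bookkeeping spells out what the paper leaves implicit: compatibility of the monoidal and \cref{lemma:abelpi} isomorphisms with $\Sph{0}\to\opifun{k}(\Dsk{1})$, and the Leibniz-rule check that $\partial h+h\partial=\abelk{k}(v)-\abelk{k}(u)$.
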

\begin{proof}
  Let $u,v\colon C\to D$ be morphisms in $\opCat k$ and $\alpha:
  u\Rightarrow v$ be an oplax transformation. By applying the functor
  $\abelk k$ to the diagram
   \[
    \begin{tikzcd}[column sep=large]
      \Sph{0}\grayk{k} C \ar[d] \ar[dr,"{\left(u,v\right)}"] & \\
      \opifun{k}(\Dsk{1})\grayk{k}C \ar[r,"\alpha"'] & D
    \end{tikzcd}
  \]
  and, using \cref{prop:abelmonoidal}, we get a commutative diagram in
  $\Chp$
  \[
  \begin{tikzcd}[column sep=large]
      \abelk{k}(\Sph{0})\otimes\abelk{k}(C) \ar[d] \ar[dr,"\abelk{k}\pair{u}{v}"] & \\
      \abelk{k}\opifun{k}(\Dsk{1})\otimes\abelk{k}(C) \ar[r,"\abelk{k}(\alpha)"'] & \abelk{k}(D),
    \end{tikzcd}
  \] 
which, by \cref{lemma:abelpi} and \ref{paragr:abeldisk}, becomes
\[
    \begin{tikzcd}[column sep=large]
     \abelk{k}(C)\oplus \abelk{k}(C) \ar[d] \ar[dr,"{\left(\abelk{k}(u),\abelk{k}(v)\right)}"] & \\
      \abel(\Dsk{1})\otimes\abelk{k}(C) \ar[r,"\abelk{k}(\alpha)"'] & \abelk{k}(D).
    \end{tikzcd}
  \]
 By the description of $\abel(\Dsk{1})$ from \ref{paragr:abeldisk},
  this amounts exactly to a chain homotopy from $\abelk{k}(u)$ to $\abelk{k}(v)$.
\end{proof}

\subsection{Deriving the abelianization functor}\label{subsec:abelderiv}
\begin{proposition}\label{prop:abelquillen}
  For any $k \geq 0$, the abelianization functor
  \[
    \abelk{k} \colon \opCat{k} \to \Chp
  \]
  is left Quillen with respect to the folk model structure on
  $\opCat k$ and the projective model structure on $\Chp$.
\end{proposition}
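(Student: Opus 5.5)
We already know from \cref{lemma:abelpoly} (just before this statement) that $\abelk k$ is a left adjoint, with right adjoint $\mu^k$ obtained by factoring the right adjoint $\mu$ of $\abel$ through $\opCat k$. So it suffices to check that $\abelk k$ preserves cofibrations and trivial cofibrations. Since the folk model structure on $\opCat k$ is cofibrantly generated, it is enough to check this on generators. Equivalently, we can show that $\mu^k$ preserves fibrations and trivial fibrations.

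The cleanest route is to reduce to the case $k=\oo$ and use transfer. The model structure on $\opCat k$ is right-induced along $\oemb k$. Hence its cofibrations are generated by $\opifun k(\gcof n)$, and its trivial cofibrations by $\opifun k(J)$, where $J$ is a set of generating trivial cofibrations of $\oCat$. By \cref{lemma:abelpi} we have $\abelk k\opifun k\cong\abel$. Therefore
\[\abelk k(\opifun k(\gcof n))\cong\abel(\gcof n),\qquad \abelk k(\opifun k(j))\cong\abel(j)\quad(j\in J).\]
So everything reduces to showing that $\abel$ itself is left Quillen.

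Alternatively, and more directly, $\mu^k$ preserves (trivial) fibrations exactly when $\oemb k\mu^k=\mu$ does, because fibrations and weak equivalences in $\opCat k$ are created by $\oemb k$. Either way, the case $k=\oo$ is the crux.

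For $k=\oo$, I would verify the generators by hand, using \ref{paragr:abeldisk}. The map $\abel(\gcof n)\colon\abel(\Sph{n-1})\to\abel(\Dsk n)$ is the identity in degrees $<n$. In degree $n$ it is the inclusion $0\to\Z$, so it is a degreewise split monomorphism with projective cokernel $\Z[n]$. Hence it is a projective cofibration; in fact it is the standard generating cofibration $S^{n-1}\to D^n$ up to a contractible summand.

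For trivial cofibrations I would use the generating set $\Dsk n\to \mathbb{J}_n$ of \cite{lafontetal:folkms}, where $\mathbb{J}_n$ is the free $n{+}1$-cell with a reversible structure, or simply the inclusion $\Dsk n\to\Dsk{n+1}$ as source. These are polygraphic (cofibrant) maps, so by \cref{prop:isocomplex} their images are inclusions of free chain complexes on subsets of generators. It then remains to check that these inclusions are quasi-isomorphisms, which is a finite computation of the homology of $\abel(\Dsk m)$ and of the relevant free polygraph.

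The main obstacle is the trivial-cofibration step: one must have an explicit generating set of trivial cofibrations for the folk structure and compute their abelianizations. If this gets messy, I would instead cite the known fact (\cite[22.1]{abgmmm:polybk}, or \cite{ara_maltsiniotis:joint}) that $\mu$ preserves fibrations and trivial fibrations, and use only the right-adjoint formulation above.
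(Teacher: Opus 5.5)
Your reduction is exactly the paper's proof. Because the folk structure on $\opCat k$ is right-induced along $\oemb k$, the sets $\opifun k(I)$ and $\opifun k(J)$ generate its cofibrations and trivial cofibrations, and \cref{lemma:abelpi} identifies their images under $\abelk k$ with $\abel(I)$ and $\abel(J)$. The paper then settles the case $k=\oo$ by citing \cite[Theorem 22.2.7]{abgmmm:polybk}, which is your fallback. Your right-adjoint phrasing, via $\mu=\oemb k\mu^k$ and the fact that $\oemb k$ creates fibrations and weak equivalences, is an equally valid form of the same reduction.

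There are two small slips. First, the adjunction $\abelk k\dashv\mu^k$ is established in the proof of \cref{lemma:abelpi}, not in \cref{lemma:abelpoly}. Second, $\abel(\gcof n)$ is a pushout of the standard generating cofibration of $\Chp$, not that map up to a contractible summand: for $n\geq 2$, $\abel(\Sph{n-1})$ has homology $\Z$ in degrees $0$ and $n-1$. Your degreewise criterion (monomorphism with projective cokernel) still shows it is a cofibration.

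The hand verification you sketch for trivial cofibrations should not replace the citation, for two reasons.

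First, the source inclusion $\Dsk n\to\Dsk{n+1}$ is not a trivial cofibration of the folk structure. The top cell of $\Dsk{n+1}$ is not reversible. For instance, the target $0$-cell of $\Dsk 1$ is not joined by any reversible $1$-cell to the image of $\Dsk 0$, so the map is not even a weak equivalence. Checking $\abel$ on these maps therefore proves nothing about $J$. In fact this test cannot fail: every $\abel(\Dsk m)$ has homology $\Z$ concentrated in degree $0$, so $\abel$ sends these inclusions to quasi-isomorphisms anyway.

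Second, the targets of the genuine generating trivial cofibrations need generators in every dimension above $n$, because the witnesses of reversibility must themselves be reversible. So the computation involves an infinite complex, not a finite one, and whether it is acyclic depends on the precise coherence data. As an example, take a naive ``walking quasi-inverse'': an $(n{+}1)$-cell $u\colon x\to y$, a reverse $\bar u$, and two independent witnesses $\rho_1\colon u\comp n\bar u\to\unit{n+1}(x)$ and $\rho_2\colon\bar u\comp n u\to\unit{n+1}(y)$, then made reversible in the same way. There the class of $\rho_1-\rho_2$ is a nonzero relative homology class in degree $n+2$.

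So either work with the exact construction of \cite{lafontetal:folkms}, or rely on the citation as the paper does. If you want a self-contained argument, the right-adjoint side is the tractable one. An $n$-cell of $\mu(K)$ is just a chain map $\abel(\Dsk n)\to K$, and all cells of $\mu(K)$ are invertible. The lifting conditions defining folk fibrations and trivial fibrations then become elementary lifting statements for chain maps.
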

\begin{proof}
   Let $I$ (resp.\ $J$) be a set of generating cofibrations (resp.\ generating
  trivial cofibrations) in $\oCat$. Since the folk model structure on
  $\opCat k$ is right induced from
  the inclusion functor $\oemb{k} \colon \opCat{k} \to \oCat$,
  $\opifun k(I)$ and $\opifun k(J)$ are respectively, sets of
  generating cofibrations and generating trivial cofibrations in
  $\opCat k$. By \cref{lemma:abelpi}, $\abelk k\opifun k(I)\cong
  \abel(I)$ and  $\abelk k\opifun k(J)\cong \abel(J)$. Thus it
  suffices to prove that $\abel (I)$ and $\abel (J)$ are respectively
  sets of cofibrations and weak equivalences in $\Chp$ and this follows
  from~\cite[Theorem 22.2.7]{abgmmm:polybk}. 
\end{proof}

\begin{paragr}\label{paragr:subtlety}
  By definition of $\abelk k$, there is a commutative triangle
  \[
    \begin{tikzcd}[column sep=small]
      \opCat{k}\ar[rr,"\oemb{k}"]\ar[rd,"\abelk{k}"']&& \oCat\ar[dl,"\abel"] \\
      &\Chp.&
  \end{tikzcd}
\]
Since the embedding functor $\oemb{k}$ preserves the weak
equivalences, it induces a functor
\[\oemb{k}\colon \Loc{\opCat{k}}\to  \Loc{\oCat}\]
at the level of homotopy categories. However, this does {\em not}
imply that the triangle
\[
  \begin{tikzcd}[column sep=small]
    \Loc{\opCat{k}}\ar[rr,"\oemb{k}"]\ar[rd,"\LL \abelk{k}"']&&
    \Loc{\oCat}\ar[dl,"\LL \abel"] \\
    &\Loc{\Chp}&
  \end{tikzcd}
\]
commutes. The universal property of left derived functors only yields a natural transformation
\begin{equation}\label{transnat}
  \homnat\colon\LL \abel\circ \oemb{k} \Rightarrow \LL \abelk{k}.
\end{equation}
Let $C$ be an \ook k-category and
\[
  p\colon\frgp P\to C
\]
a resolution of $C$ by an \ook k-polygraph $P$. As $\frgp P$ is
cofibrant in $\opCat k$,
\[
  \Loc{\abelk k}(C)\cong \abelk k(\frgp P).
  \]
On
the other hand, $\oemb{k}(\frgp P)$ is generally {\em not} cofibrant
in $\oCat$, but admits a cofibrant replacement
\[q\colon \frcat Q\to \oemb{k}(\frgp P).\]
Thus $\LL\abel(\oemb{k}(C))\cong \abel(\frcat Q)$ and the component
$\homnat_C$ of~(\ref{transnat}) is
\[\abel(q):\abel(\frcat Q)\to \abelk{k}(\frgp P)\]
up to isomorphisms depending on the choices of resolutions. Note that
resolutions in $\oCat$ as well as in $\opCat k$ may be chosen
functorially in $C$ and therefore the above isomorphisms can be chosen
natural in $C$. 
\end{paragr}
\begin{definition}\label{def:hompol}
  Let $k\geq 0$. The \emph{$\ook k$\nbd-polygraphic homology} of
  an \ook k\nbd-category $C$ is defined as
  \[
    \Hop{k}(C):=\LL \abelk{k} (C).
  \]
\end{definition}

\begin{paragr}\label{paragr:mainquestion}
  Let $C$ be an \ook k-category. Besides its \ook k-polygraphic
  homology, we may also consider its polygraphic homology when $C$ is regarded
  as an \oo-category. More precisely, we shall denote by $\Hopol(C)$ the
  \ook{\oo}-polygraphic homology of $\oemb{k}(C)$, that is,
  $\Hop{\oo}(\oemb{k}(C))$. The main question we address in this paper
  is then whether the natural morphism
  \[\homnat_C\colon \Hopol(C)\to \Hop{k}(C)  \]
  is an isomorphism.

  Whereas the general question remains open,
  we shall prove that $\homnat_C$ is indeed an isomorphism in two
  useful cases:
  \begin{itemize}
  \item $C$ is a $1$-category, seen as an \ook k-category for any $k\geq
    1$;
  \item $C$ is a groupoid, seen as an \ook k-category for any $k\geq
    0$.
  \end{itemize}
  In the course of the proof, we first recall a few fundamental
  properties of the usual homology of small categories, and finally
  compare the latter with its polygraphic counterpart.
\end{paragr}


%% file: htpykan.tex
\section{A recollection of homotopy Kan extensions in model categories}\label{sec:htpykan}
\subsection{A simplifying technical hypothesis}
\begin{paragr}\label{paragr:techyp}
  All the Quillen adjunctions $F \colon \M \leftrightarrows \M' \colon G$
  that show up in this paper have the extra property that $G$
  preserves \emph{all} weak equivalences. This is, for instance, the
  case when all objects of $\M'$ are fibrant.
This simplifies certain coherence issues: $G$ induces a \emph{unique} functor $\GZ{G}$ at the level of
  homotopy categories such that the following square is \emph{genuinely}
  commutative
  \[
    \begin{tikzcd} \M' \ar[d] \ar[r,"G"] &
\M\ar[d]\\ \Loc{\M'} \ar[r,"\GZ{G}"] &
\Loc{\M},
    \end{tikzcd}
  \]
  where $\Loc{\M}$ and $\Loc{\M'}$ are the localizations with respect
  to the weak equivalences in the sense of Gabriel--Zisman \cite{gabrielzisman:calfrh}, and
  the vertical arrows are the localization functors. It follows from
  the universal property of localization that
  $\GZ{G}$ is a right derived
  functor of $G$ and we can define $\mathbb{R}G=\overline{G}$. In addition to giving a canonical choice of right derived functor, this construction
  has a useful strict $2$\nbd-functorial property. Given another
  Quillen adjunction $F' \colon \M \leftrightarrows \M' \colon G'$, 
  any natural transformation \[\alpha \colon G \Rightarrow G'\] induces
  a unique natural transformation \[\GZ{\alpha} \colon \GZ{G}
    \Rightarrow \GZ{G'}\]
  that makes the appropriate $2$\nbd-diagram commute. It follows that the
  construction
  \[
    \begin{aligned}
    \M &\longmapsto\Loc{\M}\\
    G &\longmapsto\GZ{G}\\
    \alpha &\longmapsto\GZ{\alpha}\\
  \end{aligned}
\]
defines a \emph{strict} $2$\nbd-functor from the $2$\nbd-category of
  model categories, right Quillen functors that preserve all weak
  equivalences and natural transformations between them, to the
  $2$\nbd-category of (large) categories.

  In contrast, there is no canonical choice for the total left derived functor $\LL F$ associated with
  a left Quillen functor $F$. Furthermore, even if we make a choice for any such
  functor, and consider the $2$\nbd-functoriality which associates with
  any natural transformation $\beta \colon F \Rightarrow F'$ the (unique) natural
  transformation $\LL \beta \colon \LL F \Rightarrow \LL F'$ making
  the relevant $2$\nbd-diagram commute, the
  following correspondence
  \[
  \begin{aligned}
    \M &\longmapsto\Loc{\M}\\
    F &\longmapsto\LL F\\
    \beta &\longmapsto \LL \beta\\
  \end{aligned}
 \] 
  is only \emph{pseudo}-functorial. Hence, it involves coherence 
  data which we have to keep track of, making some proofs cumbersome.

  The upshot is that since each $\LL F$ is left adjoint to $\GZ{G}$,
  we can often exploit this fact to reduce a proof concerning left
  adjoints to one concerning right adjoints.
  This turns out to be much simpler thanks to the \emph{strict}
  $2$\nbd-functoriality stated above.
    \end{paragr}
\subsection{Projective model structure}\label{projms}

\begin{paragr}\label{paragr:htpykan} Let $\M$ be a cofibrantly generated model
category. Recall that for any small category $I$, we can equip the
category of functors $\fun{I}{\M}$ with the \emph{projective model
structure}, where the weak equivalences and the fibrations are the
natural transformations that are pointwise weak equivalences and
fibrations in $\M$, respectively. Any functor $u \colon I \to J$
between small categories induces, by precomposition, a functor
  \begin{align*} u^* \colon \fun{J}{\M} &\to \fun{I}{\M}\\ X &\mapsto
X\circ u.
  \end{align*} This functor admits a left adjoint $u_!
\colon\fun{I}{\M} \to \fun{J}{\M} $, which is given pointwise by the Kan
extension formula:
  \[ u_!(X)(j)\cong\ilim_{\tr{I}{j}}X\vert_{\tr{I}{j}}.
  \] By construction, $u^*$ preserves the weak equivalences and the
fibrations of the projective model structure. Hence, the adjunction
$u_! \dashv u^*$ is a Quillen adjunction and we get an adjunction at
the level of homotopy categories
  \[ \LL u_! \colon \Loc{\fun{I}{\M}} \leftrightarrows
\Loc{\fun{J}{\M}} \colon \GZ{u^*}.
  \]
\end{paragr}
\begin{paragr}\label{paragr:hocolim} We denote by $\Term$ the terminal category and make the
canonical identification $\fun{\Term}{\M}=\M$. For a small category
$I$, we denote by $p_I \colon I \to \Term$ the unique such
functor. The functor $p_I^* \colon \M \to \fun{I}{\M}$ is the
``diagonal'' functor that sends an object $X$ of $\M$ to the
constant diagram $I \to \M$ with value $X$. Hence, $p_{I!}$ is nothing
but the colimit functor $\ilim_I \colon \fun{I}{\M} \to \M$, and we
shall use the two notations interchangeably. Consequently, $\LL
p_{I!}$ is nothing but the \emph{homotopy colimit functor}
\[
  \hocolim_I \colon \Loc{\fun{I}{\M}} \to \Loc{\M}.
  \]
\end{paragr}
\begin{paragr} Let $I$ be a small category and $i \in \Ob(I)$ be an
object of $I$. We can regard this as a functor $i \colon \Term \to I$,
which induces an ``evaluation at $i$'' functor
  \begin{align*} i^* \colon \fun{I}{\M} &\to \M\\ X &\mapsto X(i).
  \end{align*} It is a standard fact of category theory that the
family of functors
  \[\left(i^*\colon \fun{I}{\M} \to \M\right)_{i\in \Ob(I)}\] is
\emph{conservative}, meaning that a morphism $\alpha \colon X \to Y$
of $\fun{I}{\M}$ is an isomorphism if and only if for each object $i$
of $I$, $i^*(\alpha)=\alpha_i$ is an isomorphism. The same is true at
the level of homotopy categories as we recall in the following result.
\end{paragr}
\begin{proposition}\label{prop:der2} Let $\M$ be a (cofibrantly generated) model
category and $I$ be a small category. The family of functors
  \[ \left(\GZ{i^*} \colon \Loc{\fun{I}{\M}}\to \Loc{\M}\right)_{i \in
\Ob(I)}
  \] is conservative.
\end{proposition}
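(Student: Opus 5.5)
We need to prove: the family of functors $\GZ{i^*}$ is conservative on $\Loc{\fun{I}{\M}}$. The key point is that the weak equivalences of the projective model structure are the pointwise ones, and that in a model category a morphism of the homotopy category is an isomorphism precisely when it is represented by a zig-zag that can be realized as a single map between cofibrant–fibrant objects which is a weak equivalence.

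Let $\phi\colon X\to Y$ be a morphism in $\Loc{\fun{I}{\M}}$ such that $\GZ{i^*}(\phi)$ is an isomorphism in $\Loc{\M}$ for every object $i$ of $I$. By the fundamental theorem of model categories applied to the projective model structure, choose a cofibrant replacement $X^c\to X$ and a fibrant replacement $Y\to Y^f$ (here in fact every object of $\M$ we care about could be taken fibrant, but we argue generally). These are isomorphisms in $\Loc{\fun{I}{\M}}$, and every morphism $X^c\to Y^f$ in the homotopy category is the image of an actual morphism $\alpha\colon X^c\to Y^f$ of $\fun{I}{\M}$. So up to composing with isomorphisms we may assume $\phi=\gamma(\alpha)$, where $\gamma$ denotes the localization functor. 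Since $\GZ{i^*}$ is a functor preserving isomorphisms and satisfies $\GZ{i^*}\circ\gamma=\gamma\circ i^*$ (the strict commutation of~\ref{paragr:techyp}, valid as $i^*$ preserves all weak equivalences), we get that $\gamma(\alpha_i)$ is an isomorphism in $\Loc{\M}$ for each $i$.

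Now invoke the standard fact that a model category is saturated: a morphism of $\M$ becomes an isomorphism in $\Loc{\M}$ if and only if it is a weak equivalence. Hence each $\alpha_i$ is a weak equivalence in $\M$, so $\alpha$ is a pointwise weak equivalence, i.e.\ a weak equivalence in the projective model structure, and therefore $\phi=\gamma(\alpha)$ is an isomorphism. The only step requiring care is the reduction to a genuine morphism $\alpha$ (which needs cofibrant source and fibrant target) together with saturation; both are standard results of model category theory (e.g.\ Hovey, Theorem 1.2.10), so I expect no real obstacle beyond citing them correctly.
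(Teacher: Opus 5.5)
Your argument is correct, but it is not the paper's route. The paper gives no argument of its own and simply cites \cite[Lemme 2.4]{cisinski:images}.

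You instead use the hypothesis that $\M$ is cofibrantly generated to equip $\fun{I}{\M}$ with the projective model structure, whose weak equivalences are exactly the pointwise ones. You then reduce to two standard facts about a single model category. First, a morphism of the homotopy category from a cofibrant object to a fibrant one is the image of an actual morphism. Second, model categories are saturated: a map that becomes invertible in the homotopy category is a weak equivalence. Together with the strict equality $\GZ{i^*}\gamma=\gamma\, i^*$ (valid because $i^*$ preserves all weak equivalences), this gives a short, self-contained proof under exactly the standing hypothesis of the paper.

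The cited lemma comes from Cisinski's work showing that model categories in general give rise to derivators; conservativity of the evaluation functors is the derivator axiom Der2, which is presumably where the label comes from. So it does not rely on a model structure on $\fun{I}{\M}$ with pointwise weak equivalences, and it holds without cofibrant generation.

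The citation therefore buys generality. Your argument buys elementarity and transparency, and it fully suffices here, since every model category to which the paper applies the proposition is cofibrantly generated.
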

\begin{proof}
  See \cite[Lemme 2.4]{cisinski:images}.
\end{proof}
\subsection{Homotopy cocontinuous functors}\label{hptycocts}
\begin{paragr}\label{paragr:univmor} Let $F \colon \M \leftrightarrows \M' \colon G$ be a
  Quillen adjunction between (cofibrantly generated) model categories
  and such that $G$ preserves all weak equivalences. For any small
category $I$, we consider the functors induced by postcomposition
  \[\begin{aligned} \fun{I}{F} \colon \fun{I}{\M} &\to \fun{I}{\M'}\\ X &\mapsto
F\circ X.
  \end{aligned} \quad \quad \begin{aligned} \fun{I}{G} \colon \fun{I}{\M'} &\to \fun{I}{\M}\\ X &\mapsto
G\circ X.
  \end{aligned}\]
The adjunction $\fun{I}{F} \colon \fun{I}{\M} \leftrightarrows
\fun{I}{\M'} \colon \fun{I}{G}$ is again a Quillen adjunction with
respect to the projective model structures. Furthermore, $\fun{I}{G}$
preserves all weak equivalences. Hence, it induces an adjunction at
the level of homotopy categories

  \[ \LL \fun{I}{F} \colon \Loc{\fun{I}{\M}}\leftrightarrows
    \Loc{\fun{I}{\M'}} \colon \GZ{\fun{I}{G}}.
  \]
Now let $u \colon I \to J$ be a functor between small
categories. By the strict $2$\nbd-functoriality property stated in
\ref{paragr:techyp}, we have a strictly commutative diagram
  \[
    \begin{tikzcd} \Loc{\fun{J}{\M'}} \ar[r,"\GZ{\fun{J}{G}}"] \ar[d,"\GZ{u^*}"']&
\Loc{\fun{J}{\M}} \ar[d,"\GZ{u^*}"]\\ \Loc{\fun{I}{\M'}} \ar[r,"\GZ{\fun{I}{G}}"'] & \Loc{\fun{I}{\M}}.
    \end{tikzcd}
  \]
  By adjunction, we can then define a canonical natural transformation
  \[
     \LL \fun{I}{F}  \GZ{u^*} \Rightarrow \GZ{u^*}  \LL \fun{J}{F}
   \]
   as the following composition
     \[
       \begin{tikzcd}
         \Loc{\fun{J}{\M}}\ar[d,"\LL \fun{J}{F}"']
         \ar[rd,"\Unit{\Loc{\fun{J}{\M}}}",""{name=A,below left}] & \\
         \Loc{\fun{J}{\M'}} \ar[r,"\GZ{\fun{J}{G}}"'] \ar[d,"\GZ{u^*}"']&
\Loc{\fun{J}{\M}} \ar[d,"\GZ{u^*}"]\\ \Loc{\fun{I}{\M'}}
\ar[r,"\GZ{\fun{I}{G}}"]\ar[rd,"\Unit{\Loc{\fun{I}{\M'}}}"',""{name=B,above
right}] & \Loc{\fun{I}{\M}}\ar[d,"\LL \fun{I}{F}"]\\
&\Loc{\fun{I}{\M'}}.
\ar[from=A,to=2-1,Rightarrow,"\eta"]
\ar[from=3-2,to=B,Rightarrow,"\epsilon"]
    \end{tikzcd}
  \]
  This, in turn, allows us to define a canonical natural transformation
  \[
    \LL u_!\LL \fun{I}{F} \Rightarrow \LL \fun{J}{F} \LL u_! 
  \]
  as the following composition
  \[
    \begin{tikzcd}
    &  \Loc{\fun{J}{\M'}}\ar[d,"\GZ{u^*}"]
    \ar[dl,"\Unit{\Loc{\fun{J}{\M'}}}"',""{name=A,below right}]& \Loc{\fun{J}{\M}} \ar[l,"\LL
    \fun{J}{F}"'] \ar[d,"\GZ{u^*}"'] & \Loc{\fun{I}{\M}} \ar[l,"\LL u_!"']
    \ar[dl,"\Unit{\Loc{\fun{I}{\M}}}",""{name=B,above left}]
      \\
    \Loc{\fun{J}{\M'}} & \ar[l,"\LL u_!"]
    \Loc{\fun{I}{\M'}}&\Loc{\fun{I}{\M}}. \ar[l,"\LL \fun{I}{F}"]
    \ar[from=2-2,to=A,Rightarrow,"\epsilon"]
    \ar[from=B,to=1-3,Rightarrow,"\eta"]
    \ar[from=2-3,to=1-2,Rightarrow]
    \end{tikzcd}
  \]  
\end{paragr}
\begin{proposition}\label{prop:htpycocts} Let $\M$ and $\M'$ be two (cofibrantly generated)
  model categories, $F \colon \M \leftrightarrows \M' \colon G$ be a Quillen adjunction such
  that $G$ preserves all weak equivalences, and
  $u \colon I \to J$ be a functor between small categories. Then the
  canonical natural transformations
  \begin{equation}\label{eq:morphder}
     \LL \fun{I}{F}  \GZ{u^*} \Rightarrow \GZ{u^*}  \LL \fun{J}{F}
   \end{equation}
   and
   \begin{equation}\label{eq:cocontinuous}
     \LL u_!\LL \fun{I}{F} \Rightarrow \LL \fun{J}{F} \LL u_! 
   \end{equation}
   are isomorphisms, which are natural in $F$ in the obvious sense.
\end{proposition}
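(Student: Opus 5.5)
We first prove that \eqref{eq:morphder} is an isomorphism and then obtain \eqref{eq:cocontinuous} from it by passing to mates. By \cref{prop:der2}, the family $\GZ{i^*}$, $i\in\Ob(I)$, is conservative on $\Loc{\fun{I}{\M'}}$. So it suffices to show that for every object $i$ of $I$ the whiskered transformation $\GZ{i^*}\LL\fun{I}{F}\GZ{u^*}\Rightarrow\GZ{i^*}\GZ{u^*}\LL\fun{J}{F}$ is invertible. By strict $2$\nbd-functoriality (\ref{paragr:techyp}), $\GZ{i^*}\GZ{u^*}=\GZ{(u i)^*}=\GZ{u(i)^*}$. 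The key observation is that evaluation commutes with postcomposition already at the point-set level: $i^*\fun{I}{F}=F i^*$, strictly. Evaluation $i^*$ is moreover left Quillen for the projective structure, since its right adjoint $i_*$ is given by right Kan extension, i.e.\ products of copies, and so preserves fibrations and trivial fibrations. Hence $i^*$ preserves cofibrant objects and weak equivalences between them.

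Concretely, let $X\in\fun{J}{\M}$ and take a projective cofibrant replacement $QX\to X$. Then $\LL\fun{J}{F}(X)=F\circ QX$, because $\fun{J}{F}$ is left Quillen. Also $u^*(F\circ QX)=F\circ QX\circ u$. On the other side, $\LL\fun{I}{F}(u^*X)=F\circ Q'(u^*X)$, where $Q'(u^*X)\to QX\circ u$ is a cofibrant replacement in $\fun{I}{\M}$. Unwinding the unit/counit definition, the component of \eqref{eq:morphder} at $X$ is represented by the point-set map $F\circ Q'(u^*X)\to F\circ QX\circ u$ obtained by applying $F$ to $Q'(u^*X)\to u^*QX$. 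I expect this identification to be the main obstacle. To handle it, I would use that $\fun{I}{G}$ preserves all weak equivalences, check it on the adjoint side via the strictly commuting square $\GZ{u^*}\GZ{\fun{J}{G}}=\GZ{\fun{I}{G}}\GZ{u^*}$, and then take transposes.

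Evaluating this map at $i$ gives $F(Q'(u^*X)(i))\to F(QX(u(i)))$. Its source $Q'(u^*X)(i)$ is cofibrant, since $i^*$ is left Quillen. Its target $QX(u(i))$ is cofibrant, since $u(i)^*$ is left Quillen. The underlying map $Q'(u^*X)(i)\to QX(u(i))$ is a weak equivalence between cofibrant objects. By Ken Brown's lemma, $F$ sends it to a weak equivalence, so each $\GZ{i^*}$-component is an isomorphism, and \eqref{eq:morphder} is an isomorphism.

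For \eqref{eq:cocontinuous}, note that by construction it is the mate of \eqref{eq:morphder} under the adjunctions $\LL u_!\dashv\GZ{u^*}$ (taken in both $\M$ and $\M'$). The mate of \eqref{eq:morphder}, read with the roles of left and right adjoints exchanged, is the canonical comparison $\GZ{\fun{I}{G}}\GZ{u^*}\Rightarrow\GZ{u^*}\GZ{\fun{J}{G}}$, which is an identity. Taking the mate twice, using $\LL\fun{I}{F}\dashv\GZ{\fun{I}{G}}$ together with $\LL u_!\dashv\GZ{u^*}$, shows that \eqref{eq:cocontinuous} is the mate of this identity. A transformation between left adjoints whose mate is invertible is itself invertible, so \eqref{eq:cocontinuous} is an isomorphism. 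A cleaner route is to observe directly that both composites $\LL u_!\LL\fun{I}{F}$ and $\LL\fun{J}{F}\LL u_!$ are left adjoint to $\GZ{\fun{I}{G}}\GZ{u^*}=\GZ{u^*}\GZ{\fun{J}{G}}$, and that the canonical map is the induced comparison. Naturality in $F$ follows because every ingredient — units, counits and the strict squares — is natural in the adjunction $(F,G)$.
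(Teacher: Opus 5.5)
Your proof is correct, but it takes a genuinely different, self-contained route from the paper's. The paper obtains both isomorphisms at once from the dual of \cite[Proposition 6.12]{cisinski:images}, a result formulated in the language of derivators. It only sketches naturality in $F$, via a mates argument based on the strict equality $\GZ{\fun{I}{G}}\GZ{u^*}=\GZ{u^*}\GZ{\fun{J}{G}}$.

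You instead prove \eqref{eq:morphder} by hand. Up to canonical isomorphisms, the component at $X$ is $F$ applied to the lift $\phi\colon Q'(u^*X)\to u^*QX$. The identification you flag does go through: after unwinding the derived unit and counit, the composite collapses to $F\phi$ followed by $u^*(\epsilon_{FQX}\circ F\eta_{QX})=\mathrm{id}$. The map $\phi$ is levelwise a weak equivalence between cofibrant objects, with cofibrancy coming from the fact that evaluations are left Quillen. So Ken Brown's lemma applies levelwise, even though $u^*$ need not preserve projectively cofibrant objects.

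For \eqref{eq:cocontinuous}, your ``cleaner route'' shows that it is the canonical comparison between two left adjoints of the same functor, hence invertible for purely formal reasons. This isolates the only homotopical input in \eqref{eq:morphder}. It also shows that the hypothesis that $G$ preserves all weak equivalences is exactly what makes \eqref{eq:cocontinuous} formal.

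Two minor points:
\begin{itemize}
\item The appeal to \cref{prop:der2} is unnecessary. Projective weak equivalences are levelwise, and you have already represented the component by an actual morphism of diagrams.
\item Your opening sentence suggests deducing \eqref{eq:cocontinuous} from the invertibility of \eqref{eq:morphder}. That would not work in general, since the mate of an invertible $2$\nbd-cell in a square with non-identity sides need not be invertible. What you actually use is that \eqref{eq:morphder} is itself the mate of an identity.
\end{itemize}

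The paper's version is shorter but black-boxes the homotopy theory; yours is elementary. On naturality in $F$, both accounts are at the same level of sketch.
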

\begin{proof}
 The fact that they are isomorphisms is a particular case of the dual
 of \cite[Proposition 6.12]{cisinski:images}. For the naturality, a
 calculus of mates argument shows that this is implied by the naturality
 in $G$
 of the equality
 \[
   \GZ{\fun{I}{G}}\GZ{u^*}=\GZ{u^*}\GZ{\fun{J}{G}},
 \]
 which is a straightforward consequence of the strict
 $2$\nbd-functoriality property stated in \ref{paragr:techyp}. Details
 are left to the reader.
\end{proof}
\begin{remark}
  The fact that \eqref{eq:cocontinuous} is an isomorphism is commonly
  referred to as the fact that \emph{left Quillen functors preserve
    homotopy left Kan extensions}. In particular, taking $J=\Term$,
  $u=p_I \colon I \to \Term$ the unique such functor and $X$ an
  object of $\Loc{\fun{I}{\M}}$, we get an isomorphism of $\Loc{\M}$
  \begin{equation}\label{eq:htpycolim}
    \hocolim_I \LL \fun{I}{F}(X) \cong \LL F (\hocolim_I X),
  \end{equation}
  natural in $X$ and in $F$.
\end{remark}
\subsection{Abstract homology of small categories}\label{abstracthom}
\begin{definition}\label{def:abstracthom}
  Let $\M$ be a (cofibrantly generated) model category. For a small
  category $I$ and $X$ an object of $\M$, we define the \emph{$\M$\nbd-valued
    homology of $I$ with constant coefficients $X$} as the object of $\Loc{\M}$
  \[
    \Ho_{\M}(I,X):=\LL p_{I!}\GZ{p_I^*}X=\hocolim_I (\GZ{p_I^*}X).
  \]
\end{definition}

\begin{remark}
  Note that the use of the term ``homology'' here is arguably slightly
  abusive as we don't make any hypothesis of additivity or stability
  property of any kind on (the localization of) $\M$. Later, we will
  focus on the case where $\M$ is the category of non-negatively
  graded chain complexes, whence recovering the usual setting of homology.
\end{remark}
\begin{paragr}
 Let us explore the functoriality of $\Ho_{\M}(I,X)$ in $I$. Let
  $u \colon I \to J$ be a functor between small categories. The
  commutative triangle
  \[
    \begin{tikzcd}[column sep=small]
      I \ar[rr,"u"]\ar[rd,"p_I"'] && J\ar[ld,"p_J"] \\
      &\Term&
    \end{tikzcd}
  \]
  induces a (strictly) commutative triangle
  \[
    \begin{tikzcd}[column sep=small]
      \Loc{\fun{I}{\M}}  && \Loc{\fun{J}{\M}}\ar[ll,"\GZ{u^*}"'] \\
      &\Loc{\M}.\ar[lu,"\GZ{p_I^*}"]\ar[ru,"\GZ{p_J^*}"']&
    \end{tikzcd}
  \]
  Hence, by uniqueness of adjoints, this yields a triangle that commutes
  up to a canonical isomorphism
  \[
        \begin{tikzcd}[column sep=small]
      \Loc{\fun{I}{\M}} \ar[rr,"\LL u_!"]\ar[rd,"\LL p_{I!}"'] &&
      \Loc{\fun{J}{\M}}\ar[ld,"\LL p_{J!}"] \\
      &\Loc{\M}.&
    \end{tikzcd}
  \]
  Now let $X$ be an object of $\M$. There is a canonical morphism
  \[
    \LL p_{I!}\GZ{p_I^*}X =\LL p_{I!}\GZ{u^*p_J^*}X \cong \LL p_{J!} \LL u_! \GZ{u^*p_J^*}X\to \LL p_{J!}\GZ{p_J^*}X,
  \]
  where the morphism on the right is the co-unit of the adjunction
  $\LL u_! \dashv \GZ{u^*}$. In other words, this defines a morphism
  \[
  \Ho_{\M}(u,X)\colon  \Ho_{\M}(I,X)\to \Ho_{\M}(J,X).
  \]
  We then have the following functoriality result, whose proof is a
  straightforward verification left to the reader.
\end{paragr}
\begin{proposition}\label{prop:functorialityhomology}
  Let $\M$ be a (cofibrantly generated) model category and $X$ be an
  object of $\M$. We have a functor
  \[
    \Ho_{\M}(-,X) \colon \Cat \to \Loc{\M},
  \]
  whose action on objects is
  \[
    I \longmapsto \Ho_{\M}(I,X),
  \]
  and whose action on morphisms is
  \[
    u \colon I \to J \longmapsto \Ho_{\M}(u,X) \colon
    \Ho_{\M}(I,X) \to \Ho_{\M}(J,X).
  \]
\end{proposition}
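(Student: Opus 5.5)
To prove \cref{prop:functorialityhomology}, I need two checks: $\Ho_{\M}(\Unit{I},X)$ is the identity, and $\Ho_{\M}(vu,X)=\Ho_{\M}(v,X)\circ\Ho_{\M}(u,X)$ for composable $u\colon I\to J$ and $v\colon J\to K$. Following the philosophy of~\ref{paragr:techyp}, I will work on the right-adjoint side, where everything is strictly $2$\nbd-functorial, and transfer the result to the left adjoints by uniqueness of adjoints and mates. Concretely, $\GZ{p_I^*}$ is right adjoint to $\LL p_{I!}$. The strict equality $\GZ{u^*}\GZ{p_J^*}=\GZ{p_I^*}$ corresponds, by the calculus of mates, to a canonical isomorphism $\LL p_{J!}\LL u_!\cong \LL p_{I!}$; this is exactly the isomorphism used in the definition. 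Under the adjunction $\LL p_{I!}\dashv\GZ{p_I^*}$, the morphism $\Ho_{\M}(u,X)$ then corresponds to a morphism $\GZ{p_I^*}X\to \GZ{p_I^*}\LL p_{J!}\GZ{p_J^*}X$. Unwinding, this is $\GZ{u^*}$ applied to the unit $\eta^J_X\colon \GZ{p_J^*}X\to\GZ{p_J^*}\LL p_{J!}\GZ{p_J^*}X$. In other words, $\Ho_{\M}(u,X)$ is the mate of the identity $2$\nbd-cell $\GZ{u^*}\GZ{p_J^*}=\GZ{p_I^*}$, evaluated at $X$, and then composed with a counit.

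With this description, the identity axiom is immediate. For $u=\Unit{I}$, the functor $\GZ{u^*}$ is the identity and $\LL u_!$ may be chosen to be the identity, or in any case it is canonically isomorphic to it with counit the inverse of that isomorphism. The composite then reduces to $\mathrm{id}$.

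For composition, I use that mates are compatible with pasting, which is a standard fact of the calculus of mates. The triangle for $vu$ is the pasting of the triangle for $u$, namely $\GZ{u^*}\GZ{p_J^*}=\GZ{p_I^*}$, with the triangle for $v$ whiskered by $\GZ{u^*}$. This pasting equality holds strictly because $\GZ{(vu)^*}=\GZ{u^*}\GZ{v^*}$ strictly, by~\ref{paragr:techyp}. Taking mates then shows two things. First, the canonical isomorphism $\LL p_{K!}\LL(vu)_!\cong\LL p_{I!}$ factors through $\LL v_!\LL u_!\cong\LL(vu)_!$. Second, the counit of $\LL(vu)_!\dashv\GZ{(vu)^*}$ is the composite of the counits for $v$ and for $u$, with $\GZ{v^*}$ and $\LL v_!$ whiskered in. Applying these facts at $\GZ{p_K^*}X$ and using naturality of the counit of $\LL u_!\dashv \GZ{u^*}$ to slide it past $\LL v_!$ and $\LL p_{K!}$, the composite $\Ho_{\M}(v,X)\circ\Ho_{\M}(u,X)$ is rewritten as $\Ho_{\M}(vu,X)$.

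I expect the main obstacle to be bookkeeping rather than substance. The isomorphism $\LL p_{J!}\LL u_!\cong\LL p_{I!}$ depends on choices of left derived functors, and these are only pseudofunctorial. The cleanest way around this is to avoid the left side entirely and verify the equivalent statement for the mates, that is, for the induced maps under the adjunction $\LL p_{I!}\dashv\GZ{p_I^*}$. On the right side all the squares commute strictly, so only triangle identities are needed.
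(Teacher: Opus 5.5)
Your proof is correct. The paper leaves this verification to the reader, and your strategy of passing to adjuncts under $\LL p_{I!}\dashv\GZ{p_I^*}$, where $\Ho_{\M}(u,X)$ corresponds to $\GZ{u^*}(\eta^J_{\GZ{p_J^*}X})$, is exactly the reduction to right adjoints advocated in \ref{paragr:techyp}. That description also makes composition a one-line check: using $\GZ{p_I^*}=\GZ{u^*}\GZ{p_J^*}$, the adjunct of $\Ho_{\M}(v,X)\circ\Ho_{\M}(u,X)$ is $\GZ{u^*}\bigl(\GZ{v^*}(\eta^K_{\GZ{p_K^*}X})\bigr)=\GZ{(vu)^*}(\eta^K_{\GZ{p_K^*}X})$, so your pasting-of-mates computation on the left-adjoint side is sound but unnecessary.
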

\subsection{Classical homology of categories}
\begin{paragr}\label{paragr:homalg}
  For $C$ a small category, we denote by $\LMod{C}$ the category of
  \emph{left $C$\nbd-modules}, that is of functors
  \[
    C \to \Ab
  \]
  from $C$ to the category of abelian groups, and of natural
  transformations between them. By definition, the category of \emph{right}
  $C$\nbd-modules is the category
  \[
    \RMod{C}:=\LMod{C^{\op}}
  \]
  and everything that follows is dualized accordingly. For an object
  $X$ of $\Ab$, we shall abusively denote by $\cst{X}$ both the constant left
  and right $C$\nbd-modules with value $X$. Using the notations from
  \ref{paragr:hocolim}, this means that $\cst{X}$ is a common notation for
  \[
   p_C^*X \text{ and } p_{C^\op}^*X.
  \]
  We denote by $\Chp(C)$ the category of chain complexes
  of left $C$\nbd-modules in non-negative degree. For $C=\Term$ the
  terminal category, we use the notation $\Chp(\Z)$ instead. 
  We denote by $\Derp{C}$ the \emph{derived
 category of $C$} in non-negative degree, that is the
localization of $\Chp(C)$ with respect to
 quasi-isomorphisms (and we use the notation $\Derp{\Z}$ when
 $C=\Term$ is the terminal category). This localization is controlled by the
  so-called \emph{projective} model structure on $\Chp(C)$, which is
  cofibrantly generated,
  characterized as follows:
  \begin{itemize}
  \item the weak equivalences are the quasi-isomorphisms,
  \item the fibrations are the levelwise epimorphisms in each positive
    degree,
  \item the cofibrations are the levelwise monomorphisms with
    projective cokernel in each non-negative degree.
  \end{itemize}
  In particular, the cofibrant objects are the chain complexes (in
  non-negative degree) of degreewise projective
  $C$\nbd-modules. The following lemma is useful to
  construct and recognize such objects.
\end{paragr}
\begin{lemma}\label{lemma:freeisproj}
  Let $c$ be an object of $C$. The left $C$\nbd-module $\Z[\homset{C}{c}{-}]$
  \[
    \begin{aligned}
      C &\to \Ab\\
      d &\mapsto \Z[\homset{C}{c}{d}],
    \end{aligned}
  \]
  is a projective object of $\LMod{C}$.
\end{lemma}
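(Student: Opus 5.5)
The natural route is the Yoneda lemma. Write $M=\Z[\homset{C}{c}{-}]$. The aim is to show that $\homset{\LMod{C}}{M}{-}$ is naturally isomorphic to evaluation at $c$. Since evaluation at $c$ preserves epimorphisms, this gives projectivity directly.

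First, the free abelian group functor $\Z[-]\colon\Set\to\Ab$ is left adjoint to the forgetful functor $U\colon\Ab\to\Set$. Applying this adjunction pointwise gives an adjunction between $\fun{C}{\Set}$ and $\LMod{C}$. Hence, for any left $C$-module $N$,
\[
\homset{\LMod{C}}{\Z[\homset{C}{c}{-}]}{N}\cong\homset{\fun{C}{\Set}}{\homset{C}{c}{-}}{U\circ N}\cong U(N(c)),
\]
where the second isomorphism is the Yoneda lemma. Concretely, a natural transformation $\phi\colon M\to N$ corresponds to the element $\phi_c(\Unit{c})\in N(c)$. Conversely, an element $n\in N(c)$ determines the transformation whose component at $d$ is the $\Z$-linear map $\Z[\homset{C}{c}{d}]\to N(d)$ sending $g$ to $N(g)(n)$. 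Naturality of this construction in both directions is routine.

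To conclude, let $\alpha\colon N\to N'$ be an epimorphism in $\LMod{C}$ and let $\psi\colon M\to N'$ be any morphism. Epimorphisms in a functor category into $\Ab$ are pointwise, so $\alpha_c\colon N(c)\to N'(c)$ is surjective. We may therefore choose $n\in N(c)$ with $\alpha_c(n)=\psi_c(\Unit{c})$. Let $\phi\colon M\to N$ be the morphism corresponding to $n$. Then $\alpha\circ\phi$ and $\psi$ correspond to the same element $\alpha_c(n)=\psi_c(\Unit{c})$ of $N'(c)$. By the uniqueness part of the Yoneda correspondence, $\alpha\circ\phi=\psi$, so $\psi$ lifts along $\alpha$ and $M$ is projective.

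No step is a real obstacle. The only points needing care are the pointwise nature of epimorphisms in $\LMod{C}$ and the naturality of the Yoneda bijection, both of which are standard.
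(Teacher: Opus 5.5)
Your proposal is correct and follows the same route as the paper, which simply says the result ``follows straightforwardly from the Yoneda lemma.'' You supply the details the paper leaves implicit: the free--forgetful adjunction applied pointwise, the Yoneda identification $\homset{\LMod{C}}{\Z[\homset{C}{c}{-}]}{N}\cong N(c)$, and the fact that epimorphisms in $\LMod{C}$ are pointwise.
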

\begin{proof}
  This follows straightforwardly from the Yoneda lemma.
\end{proof}

\begin{paragr}
   Notice that for a small category $C$, there is a canonical
   \emph{isomorphism} of categories
  \[
    \Chp(C)\cong \fun{C}{\Chp(\Z)},
  \]
  and we shall use this identification implicitly. Note that this
  identification is compatible with model structures in the sense that
  the projective model structure on $\fun{C}{\Chp(\Z)}$ in the sense
  of \ref{paragr:htpykan} is the same, via the previous isomorphism,
  as the projective model structure on $\Chp(C)$ in the sense of
  \ref{paragr:homalg}.

  This allows us to use all the results from the sections
  \ref{projms}, \ref{hptycocts} and \ref{abstracthom}. 
\end{paragr}
\begin{definition}\label{def:classicalhomology}
  Let $C$ be a small category. We define
  the \emph{homology of $C$}, and denote it by
    $\Ho(C),$
  as the $\Chp(\Z)$\nbd-valued homology of $C$ with constant coefficients $\cst{\Z}$
  (\cref{def:abstracthom}).
\end{definition}
\begin{remark}\label{remark:homologygroups}
  By definition, $\Ho(C)$ is an object of $\Derp{\Z}$. To recover
  homology groups in the usual sense, it suffices to postcompose with the $n$-th
  homology group functor $H_n \colon \Derp{\Z}\to \Ab$. In other
  words, we can make the following definition
  \[
    \Ho_n(C):=H_n(\Ho(C)).
  \]
\end{remark}
\begin{paragr}
  Let us quickly unfold \cref{def:classicalhomology}. By definition,
  the homology of a small category $C$ is
  \[
    \Ho(C)=\hocolim_C\cst{\Z}.
  \]
  In order to make this formula look more familiar, let us introduce
  the following construction. Given a right $C$\nbd-module $M$ and a left $C$\nbd-module $N$, we define their (total) tensor product as
  the following coend, taken in the category of abelian groups,
  \[
    M\tens{C}N:=\int^{c \in C}M(c)\tens{\Z} N(c).
  \]
  Note that when $C$ is a group, say $C=G$, then this is indeed the usual
  tensor product of $\Z G$\nbd-modules, where $\Z G$ is the group ring
  of $G$. The above tensor product defines a functor
  \[
    -\tens{C}-\colon \RMod{C} \times \LMod{C} \to \Ab.
  \]
  By coend calculus, taking $M=\cst{\Z}$ yields a
  natural isomorphism
  \[
    \cst{\Z}\tens{C}N \cong \ilim_C N.
  \]
  In particular, this implies that we have
  \[
    \Ho(C)\cong \cst{\Z} \Ltens{C} \cst{\Z},
  \]
  where $\Ltens{C}$ is the left derived functor of
  $\tens{C}$. If we define
  \[
    \Tor{C}{n}(M,N):=H_n(M \Ltens{C} N),
  \]
  then, following \cref{remark:homologygroups}, we have
  \[
    H_n(C)\cong \Tor{C}{n}(\cst{\Z},\cst{\Z}).
  \]
  Note that in the case that $C$ is a group $C=G$, we indeed recover
  the usual notion of $\Tor{}{}$, and thus the classical definition of
  group homology.
\end{paragr}


%% file: comparison.tex
\section{Comparison theorem}\label{sec:comparison}

\subsection{The two fundamental properties}
\begin{paragr}\label{paragr:colimtr}
  Let $C$ be a $1$\nbd-category. Recall from \ref{paragr:pullback_resol} that, given an $\ook
  k$\nbd-category
  $X$ and an $\ook k$\nbd-functor $p \colon X \to C$, with $1 \leq k \leq \oo$, we defined a functor
    \[
    \begin{aligned}
      \tr{X}{-} \colon C &\to \opCat{k} \\
      c &\mapsto \tr{X}{c}.
    \end{aligned}
  \]
    This
    construction is functorial in $p\colon X \to C$ as it canonically defines a functor
  \begin{align*}
    \tr{\opCat{k}}{C} &\to \funopCat{k}{C}\\
    (X, X \overset{p}{\to} C) &\mapsto \left(c \mapsto \tr{X}{c}\right).
  \end{align*}
  Notice that for $(X, X \overset{p}{\to} C)$ in $\tr{\opCat{k}}{C}$,
  we have a cocone
  \[
    (\tr{X}{c} \to X)_{c \in \Ob{C}},
  \]
  hence
  a canonical morphism
  \[
    \ilim_{c \in C}\tr{X}{c} \to X,
  \]
  which is natural in the sense that it defines a natural
  transformation from the functor
  \begin{align*}
    \tr{\opCat{k}}{C} &\to \opCat{k}\\
    (X, X \overset{p}{\to} C) &\mapsto \ilim_{c \in C} \tr{X}{c},
  \end{align*}
  to the projection functor
    \begin{align*}
    \tr{\opCat{k}}{C} &\to \opCat{k}\\
    (X, X \overset{p}{\to} C) &\mapsto X.
    \end{align*}
    Finally, if we take $C$ a groupoid and $k=0$, then everything said
    above still makes sense.
\end{paragr}
\begin{lemma}\label{lemma:colim}
  The natural morphism of $\opCat{k}$
  \[
     \ilim_{c \in C}\tr{X}{c} \to X,
   \]
   is an isomorphism.
\end{lemma}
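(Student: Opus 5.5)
The plan is to reduce to a statement about the slice categories of $C$ and then transport it along the pullback. The key observation is that the colimit formula already holds at the level of $C$ itself: the canonical morphism
\[
\ilim_{c\in C}\tr{C}{c}\to C
\]
is an isomorphism in $\opCat k$. I would then deduce the general case by pulling back along $p\colon X\to C$, using that pullback along $p$ commutes with the relevant colimits.

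Since colimits in $\opCat k$ are not computed cellwise, I will not try to compute them directly. Instead, I will exploit the explicit cellwise description of $\tr{X}{c}$ from \ref{paragr:pullback_resol}. An $n$-cell of $\tr Xc$ is a pair $\pair{x}{f}$ with $x\in X_n$ and $f\colon \tge 0(p(x))\to c$ (for $n=0$, $f\colon p_0x\to c$). The transition map along $g\colon c\to c'$ sends $\pair{x}{f}$ to $\pair{x}{gf}$. The sources, targets, identities and compositions act on the first component only, adjusting $f$ by $p$ of the relevant cell in $C$.

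The first step is to compute the colimit in the category of globular sets, or even cellwise in $\Set$. In each dimension $n$, the set $\ilim_c (\tr Xc)_n$ is a coproduct over $x\in X_n$ of $\ilim_{c}\homset{C}{t(x)}{c}$, where $t(x)=\tge 0(p(x))$. By the co-Yoneda lemma, $\ilim_c\homset{C}{t(x)}{c}$ is a point, represented by $\unit{}_{t(x)}$. Hence the set-level colimit in each dimension is exactly $X_n$, and the canonical map to $X$ is bijective on $n$-cells for all $n$.

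The second step, and the main obstacle, is to upgrade this to $\opCat k$. My approach is to show directly that the cocone $(\tr Xc\to X)_c$ is universal. Given a compatible family $\phi_c\colon \tr Xc\to Y$ of morphisms of $\ook k$-categories, define $\psi\colon X\to Y$ by $\psi(x)=\phi_{t(x)}(\pair{x}{\unit{}_{t(x)}})$. Compatibility with the transition maps gives
\[
\phi_c(\pair{x}{f})=\phi_{t(x)}(\pair{x}{\unit{}})\quad\text{for every } f\colon t(x)\to c,
\]
so $\psi$ is the only candidate, which gives uniqueness. For existence, I must check that $\psi$ is a morphism. Sources and targets are handled by noting that $\sce i(\pair{x}{\unit{}})$ is $\pair{\sce i x}{f'}$ for a suitable $f'$ (for instance $f'=p(\sce 0 x \to \tge 0 x)$ when $i=0$), and then applying the displayed equality. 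For a composite $x\comp i y$ with $i\geq 0$, the cells $\pair{x}{f_1}$ and $\pair{y}{\unit{}}$, with $f_1$ chosen appropriately, are composable in the single slice $\tr X{t(y)}$. Their composite is $\pair{x\comp i y}{\unit{}}$, and $\phi_{t(y)}$ preserves composition. Identities are handled the same way.

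It follows that the cellwise bijection is an isomorphism of $\ook k$-categories. No special property of $k$ is used in this argument, so the groupoid case with $k=0$ goes through verbatim.
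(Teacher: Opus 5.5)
Your argument is correct, and it takes a different route from the paper.

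\textbf{The paper's route.} The proof there is a two-line reduction. The case $k=\oo$ is quoted from \cite[Lemma 7.5]{guetta:homcat}. The general case then follows because the inclusion $\oemb k\colon\opCat k\to\oCat$ has the right adjoint $\omaxgr k$, hence preserves colimits, so the colimit of $\tr X{-}$ in $\opCat k$ is the one computed in $\oCat$.

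\textbf{Your route.} You verify the universal property of the cocone $(\tr Xc\to X)_c$ directly, using the cellwise description of the pullback. Each cell $x$ has a canonical representative $\pair{x}{1_{t(x)}}$ in the slice over $t(x)=\tge 0(p(x))$. Compatibility of the family forces $\phi_c(\pair xf)=\phi_{t(x)}(\pair x{1_{t(x)}})$, which gives both uniqueness and the factorization. The key trick is that a $\comp 0$-composable pair $x,y$ can be placed in the single slice over $t(y)$ by decorating $x$ with $p(\sce 1 y)\colon t(x)\to t(y)$; this is what makes $\psi$ preserve compositions. The test object $Y$ may be an arbitrary \oo-category, so your argument also reproves the cited $\oo$ case and shows the cocone is a colimit in $\oCat$, uniformly in $k$ (including $k=0$ for groupoids). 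The paper's route is shorter but relies on the external reference.

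\textbf{Two cosmetic points.} First, the plan announced in your opening paragraph, that pullback along $p$ commutes with the relevant colimits, is never used. It would need justification, since pullback functors in $\oCat$ need not preserve colimits in general. Second, your cellwise computation in $\Set$ is superfluous. The direct verification of the universal property alone shows that the canonical comparison map $\ilim_{c}\tr Xc\to X$ is an isomorphism.
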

\begin{proof}
The case $k=\oo$ is \cite[Lemma 7.5]{guetta:homcat}. The general case
follows from the fact that $\opCat{k} \to \oCat$
  preserves colimits.
\end{proof}
\begin{proposition}\label{prop:hocolimtr}
  Let $C$ be a $1$\nbd-category. Then for any $1 \leq k \leq \oo$, the canonical morphism of
  $\Loc{\opCat{k}}$
  \[
    \hocolim_{c\in C }\tr{C}{c} \to C
    \]
  is an isomorphism. If $C$ is a groupoid, then this also works for
  $k=0$.
\end{proposition}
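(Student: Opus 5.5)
We will compute the homotopy colimit with a projectively cofibrant diagram, built from a polygraphic resolution, and then use the fact that a strict colimit of a cofibrant diagram computes the homotopy colimit.

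First choose a polygraphic resolution $p\colon\frgp P\to C$ in $\opCat k$, that is, an \ook k-polygraph $P$ with $p$ a trivial fibration in the folk model structure. By \cref{thm:projresol}, or by \cref{thm:projresol_grp} when $C$ is a groupoid and $k=0$, the functor $\tr{\frgp P\!}{-}\colon C\to\opCat k$ is cofibrant in the projective model structure on $\funopCat{k}{C}$. Hence
\[
\hocolim_{c\in C}\tr{\frgp P\!}{c}\cong\ilim_{c\in C}\tr{\frgp P\!}{c},
\]
and the right-hand side is isomorphic to $\frgp P$ by \cref{lemma:colim}.

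Next we compare the diagrams. The map $p$ is a morphism $(\frgp P,p)\to(C,\Unit C)$ in $\tr{\opCat k}{C}$, so functoriality (\ref{paragr:colimtr}) gives a natural transformation
\[
\tr{\frgp P\!}{-}\Rightarrow\tr{C}{-},
\]
where $\tr Cc$ is the pullback of $p'=\Unit C$, which is just the slice. Its component at $c$ is the base change of $p$ along the projection $\tr Cc\to C$ in the pullback square~\eqref{eq:pullback}. Trivial fibrations are stable under pullback, so each component $\tr{\frgp P\!}{c}\to\tr Cc$ is a trivial fibration, in particular a weak equivalence. Therefore the transformation is an isomorphism in $\Loc{\funopCat{k}{C}}$, and applying $\hocolim_C$ gives
\[
\hocolim_{c\in C}\tr{\frgp P\!}{c}\cong\hocolim_{c\in C}\tr Cc .
\]

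Finally we check compatibility. The canonical morphism $\hocolim_c\tr Cc\to C$ is the composite of the natural map $\hocolim\to\ilim$ with the isomorphism $\ilim_c\tr Cc\cong C$ of \cref{lemma:colim}. Both of these are natural in the diagram over $\tr{\opCat k}{C}$, so we get a commutative square
\[
\begin{tikzcd}
\hocolim_c\tr{\frgp P\!}{c}\ar[r]\ar[d] & \frgp P\ar[d,"p"]\\
\hocolim_c\tr Cc\ar[r] & C .
\end{tikzcd}
\]
In this square the top arrow is an isomorphism, since the diagram is cofibrant and \cref{lemma:colim} applies. The left arrow is an isomorphism by the previous step, and $p$ is a weak equivalence. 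By two-out-of-three in $\Loc{\opCat k}$, the bottom arrow is an isomorphism.

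The main point needing care is this last naturality check. We must make sure that the comparison $\hocolim\to\ilim$ (the counit-type map $\LL p_{C!}\to p_{C!}$ composed with localization) is natural with respect to morphisms of diagrams, so that the square genuinely commutes in $\Loc{\opCat k}$. This is standard for total left derived functors, which come with a natural transformation $\LL F\circ\gamma\Rightarrow\gamma\circ F$, where $\gamma$ denotes the localization functor. The other steps are direct citations of earlier results. The groupoid case with $k=0$ is identical, using \cref{thm:projresol_grp} and the groupoid remark in \ref{paragr:colimtr}.
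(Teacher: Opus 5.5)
Your proof is correct and follows essentially the same route as the paper. You resolve $C$ by $\frgp P$, get the top row from the cofibrancy of $\tr{\frgp P\!}{-}$ (\cref{thm:projresol}, or \cref{thm:projresol_grp} when $k=0$) together with \cref{lemma:colim}, get the left column from pullback-stability of trivial fibrations, and conclude by two-out-of-three. The only difference is cosmetic: the paper keeps the intermediate column $\ilim_{c\in C}$ and applies two-out-of-three twice in a $2\times 3$ diagram, whereas you merge $\hocolim\to\ilim\to X$ into a single arrow and spell out the naturality of $\hocolim\to\ilim$, which the paper leaves implicit.
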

\begin{proof}
  Let $\frgp{P} \to C$ be a polygraphic resolution of $C$ in
  $\opCat{k}$ and \[\tr{\frgp P\!}{-} \colon C \to \opCat{k}\] obtained
  as in section \ref{subsec:polres}. We get a commutative diagram in
  $\Loc{\opCat{k}}$
  \[
    \begin{tikzcd}
      \displaystyle\hocolim_{c \in C}\tr{\frgp P\!}{c} \ar[d] \ar[r]& \displaystyle\ilim_{c \in
        C}\tr{\frgp P\!}{c} \ar[d] \ar[r] & \frgp P \ar[d] \\
      \displaystyle\hocolim_{c \in C}\tr{C}{c} \ar[r]&\displaystyle\ilim_{c \in C}\tr{C}{c}\ar[r] & C.
    \end{tikzcd}
  \]
  Note that both $\ilim_{c \in C}$ in the middle are the images by the
  localisation functor \[\opCat{k} \to \Loc{\opCat{k}}\]
  of the actual
  colimits taken in the category $\opCat{k}$. The right vertical
  arrow is an isomorphism by hypothesis, the horizontal arrows of the
  square on the right are isomorphisms by \cref{lemma:colim}. By
  stability of trivial fibrations under pullback, the natural transformation
  $\tr{\frgp P\!}{-} \to \tr{C}{-}$ is a pointwise weak equivalence. Since homotopy colimits
  send pointwise weak equivalences to isomorphisms (in the homotopy
  category), this proves that the left vertical arrow is an
  isomorphism. Finally, the top left
  horizontal arrow is an isomorphism
  because $\tr{\frgp P\!}{-}$ is cofibrant by \cref{thm:projresol} (or
  \cref{thm:projresol_grp} in the case $k=0$). By applying
  two-out-of-three twice, we get that all the remaining arrows of this diagram
  are isomorphisms too. 
\end{proof}
\begin{lemma}\label{lemma:liftoplax}
  Let $0 \leq k \leq \omega$ and consider a diagram as below in $\opCat{k}$, 
  \[
    \begin{tikzcd}[row sep=large]
      A' \ar[d,"p"]\ar[r,bend left,"u'"] \ar[r,bend right,"v'"']& B' \ar[d,"q"] \\
      A \ar[r,bend left,"u",""{name=A,below}] \ar[r,bend
      right,"v"',""{name=B,above}]& B,
      \ar[from=A,to=B,Rightarrow,"\alpha"]
    \end{tikzcd}
  \]
  where the front and back faces are commutative. If $q$ is a trivial
  fibration and $A'$ is cofibrant in the folk model structure on
  $\opCat{k}$, then there exists an oplax transformation $\alpha'
  \colon u' \Rightarrow v'$ making the whole diagram commute, which means
  that
  \[
    q\wsk \alpha' = \alpha \wsk p.
  \]
\end{lemma}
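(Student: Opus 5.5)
The plan is to recast the whole statement as a single lifting problem in the folk model structure on $\opCat{k}$. By \cref{def:oplax}, the oplax transformation $\alpha$ is a morphism $\alpha\colon\opifun{k}(\Dsk{1})\grayk{k}A\to B$ whose restriction along $\Sph{0}\grayk{k}A\to\opifun{k}(\Dsk{1})\grayk{k}A$ is $\pair{u}{v}$. The oplax transformation $\alpha'$ we are looking for is a morphism $\opifun{k}(\Dsk{1})\grayk{k}A'\to B'$ subject to two conditions: it restricts to $\pair{u'}{v'}$ on $\Sph{0}\grayk{k}A'\cong A'\coprod A'$, and its composite with $q$ equals
\[
\alpha\circ(\Unit{\opifun{k}(\Dsk{1})}\grayk{k}p) = \alpha\wsk p .
\]
This is exactly a diagonal filler for the commutative square whose left side is $j\colon\Sph{0}\grayk{k}A'\to\opifun{k}(\Dsk{1})\grayk{k}A'$, right side is $q$, top is $\pair{u'}{v'}$, and bottom is $\alpha\wsk p$. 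The square commutes because
\[
q\circ\pair{u'}{v'} = \pair{qu'}{qv'} = \pair{up}{vp},
\]
using the commutativity of the front and back faces, and $\pair{up}{vp}$ is the restriction of $\alpha\wsk p$ to $\Sph{0}\grayk{k}A'$.

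Since $q$ is a trivial fibration, a filler exists as soon as $j$ is a cofibration. So the key step is to show that $j$ is a cofibration in $\opCat{k}$. Observe that $j=\opifun{k}(\gcof{1})\grayk{k}\Unit{A'}$. Here $\opifun{k}(\gcof{1})$ is a cofibration between cofibrant objects, since $\opifun{k}(\Sph 0)=\Sph 0$ is cofibrant. Also, $\emptyset\to A'$ is a cofibration by hypothesis.

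For the cofibration claim I would invoke the monoidal model structure result of \cite{ara_lucas:folmon} (the case $k=\oo$ being due to Lafont–Métayer–Worytkiewicz / Ara–Maltsiniotis): the folk model structure on $\opCat{k}$ is monoidal for $\grayk{k}$. The pushout-product axiom then applies to $\opifun{k}(\gcof 1)$ and $\emptyset\to A'$. Their pushout-product is
\[
\bigl(\Sph0\grayk{k}A'\bigr)\textstyle\coprod_{\Sph0\grayk{k}\emptyset}\bigl(\opifun{k}(\Dsk1)\grayk{k}\emptyset\bigr)\longrightarrow\opifun{k}(\Dsk1)\grayk{k}A'.
\]
Because $\grayk{k}$ is biclosed, tensoring with $\emptyset$ gives $\emptyset$, so this pushout-product map is precisely $j$, and hence $j$ is a cofibration.

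Finally, I would check that the resulting filler $\alpha'$ satisfies the two required conditions. The upper triangle says $\alpha'$ is an oplax transformation $u'\Rightarrow v'$, and the lower triangle says $q\wsk\alpha'=\alpha\wsk p$.

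I expect the main obstacle to be the monoidal-compatibility input: making sure the cited result of Ara–Lucas really gives the pushout-product axiom for $\grayk{k}$ with all $0\le k\le\oo$, including $k=0$. If only the generating-cofibration form is available, I would reduce to generators. In that case I would check that
\[
\opifun{k}(\gcof 1)\,\square\,\opifun{k}(\gcof n) \cong \opifun{k}(\gcof1\,\square\,\gcof n),
\]
using that $\opifun{k}$ is strong monoidal. The right-hand side is $\opifun{k}$ applied to a cofibration of $\oCat$, which is a cofibration because $\opifun{k}$ is left Quillen. The case of a general cofibrant $A'$ then follows by the standard saturation argument.
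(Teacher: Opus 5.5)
Your proposal is correct and follows essentially the same route as the paper. Both recast the data as a lifting square against the trivial fibration $q$, whose left leg $\Sph{0}\grayk{k}A'\to\opifun{k}(\Dsk{1})\grayk{k}A'$ is a cofibration because $A'$ is cofibrant and the folk model structure is monoidal for $\grayk{k}$ (Ara--Lucas); your identification of this map with the pushout-product of $\opifun{k}(\gcof 1)$ and $\emptyset\to A'$ spells out the step the paper leaves implicit.
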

\begin{proof}
  The data of the given diagram amounts to the following commutative
  diagram in $\opCat{k}$
  \[
    \begin{tikzcd}
      \Sph{0} \grayk{k} A' \ar[r]\ar[d] \ar[rr,bend
      left,"{(u',v')}"]& \opifun{k}(\Dsk{1}) \grayk{k} A'
      \ar[d]& B' \ar[d,"q"] \\
      \Sph{0} \grayk{k} A \ar[r] & \opifun{k}(\Dsk{1}) \grayk{k} A \ar[r,"\alpha"]
      & B.
    \end{tikzcd}
  \]
  Since $A'$ is cofibrant and $\grayk{k}$ makes the folk model
  structure on $\opCat{k}$ a monoidal model structure \cite{ara_lucas:folmon}, the
  arrow $ \Sph{0}  \grayk{k} A' \to  \opifun{k}(\Dsk{1}) \grayk{k} A'$ is a
  cofibration. Since $q$ is a trivial fibration, there exists a
  morphism $\alpha' \colon \opifun{k}(\Dsk{1})  \grayk{k} A' \to B'$ making the
  whole diagram commute.
\end{proof}
\begin{proposition}\label{prop:hlgycontrcat}
  Let $C$ be a small category with a terminal object. Then, for any $1
  \leq k \leq \oo$ the
  canonical morphism of $\Derp{\Z}$
  \[
    \Hop{k}(C) \to \Z,
  \]
  induced by the canonical functor $C \to \Term$, is an
  isomorphism. If $C$ is a groupoid, then it is also an isomorphism
  for $k=0$.
\end{proposition}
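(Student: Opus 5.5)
We will show that for a polygraphic resolution $p\colon \frgp P\to C$ in $\opCat k$, the chain complex $\abelk k(\frgp P)$ is chain homotopy equivalent to $\Z$ concentrated in degree $0$. Since $\frgp P$ is cofibrant, $\Hop k(C)\cong \abelk k(\frgp P)$ by \cref{paragr:subtlety}. The map to $\Z$ in the statement is then represented by $\abelk k$ applied to $\frgp P\to C\to \Term=\Dsk 0$, and $\abelk k(\Dsk 0)=\Z$.

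Let $t$ be the terminal object of $C$. On $C$ there are two endofunctors, $\Unit C$ and the constant functor $\kappa_t$ at $t$, together with a natural transformation $\gamma\colon \Unit C\Rightarrow \kappa_t$ whose components are the unique arrows $x\to t$. This $\gamma$ induces an oplax transformation in the sense of \cref{def:oplax}. For the groupoid case with $k=0$ the same construction works, since $\gamma$ is then a natural isomorphism inside $\Grpd$ and everything stays in $\opCat 0$. Whiskering gives an oplax transformation $\gamma\wsk p\colon p\Rightarrow \kappa_t p$.

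Next we lift the endomorphisms. Choose $a\in P_0$ with $p_0(a)=t$; this is possible because $p$ is a trivial fibration, hence surjective on $0$-cells. Write $e\colon \frgp P\to \frgp P$ for the constant morphism at $a$, that is, the composite $\frgp P\to \Dsk 0\xrightarrow{a}\frgp P$. Then $pe=\kappa_t p$ and $p\,\Unit{\frgp P}=\Unit C\,p$. We apply \cref{lemma:liftoplax} with $A'=B'=\frgp P$ (which is cofibrant), $A=B=C$, $q=p$ (a trivial fibration), $u'=\Unit{\frgp P}$, $v'=e$, $u=\Unit C$, $v=\kappa_t$ and $\alpha=\gamma$. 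This produces an oplax transformation $\alpha'\colon \Unit{\frgp P}\Rightarrow e$. By \cref{cor:abeltrans}, $\abelk k(\Unit{\frgp P})$ is then chain homotopic to $\abelk k(e)$.

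The map $\abelk k(e)$ factors through $\abelk k(\Dsk 0)=\Z$, and the composite $\Dsk 0\to\frgp P\to \Dsk 0$ is the identity. Hence $\abelk k(\frgp P)\to\Z$ and $\Z\to\abelk k(\frgp P)$ are mutually inverse up to homotopy, so the map $\abelk k(\frgp P)\to\Z$ is a quasi-isomorphism, as required.

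The main point to check carefully is that the morphism $\Hop k(C)\to\Z$ appearing in the statement, namely $\LL\abelk k$ applied to $C\to\Term$, really is represented by $\abelk k(\frgp P)\to\abelk k(\Dsk 0)$. This holds because $\Dsk 0$ is cofibrant and $\frgp P\to C\to\Dsk 0$ is a map between cofibrant objects in the correct homotopy class. A second point needing care is that a genuine natural transformation of $1$-categories yields an oplax transformation in $\opCat k$ through the canonical map $\opifun k(\Dsk 1)\grayk k C\to \opifun k(\Dsk 1)\times C$, as explained just before \cref{prop:abelmonoidal}. The lifting step itself is supplied directly by \cref{lemma:liftoplax}.
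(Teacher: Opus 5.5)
Your proposal is correct and follows essentially the same route as the paper: you lift the natural transformation from $\Unit{C}$ to the constant functor at the terminal object through the resolution $\frgp P\to C$ using \cref{lemma:liftoplax}, then apply \cref{cor:abeltrans} to get a chain homotopy between the identity of $\abelk{k}(\frgp P)$ and the endomorphism factoring through $\Z$. Your extra checks are points the paper leaves implicit, namely that $\Z\to\abelk{k}(\frgp P)\to\Z$ is the identity and that the map in the statement is represented by $\abelk{k}(\frgp P)\to\abelk{k}(\Dsk{0})$.
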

\begin{proof}
  Let us denote by $c_0$ the terminal object of $C$. We then have a
  natural transformation
    \[
    \begin{tikzcd}
      C \ar[r,bend left,"\Unit{C}",""{name=A,below}]
      \ar[r,bend right,"\cst{c_0}"',""{name=B,above}] & C
      \ar[from=A,to=B,Rightarrow]
    \end{tikzcd}
  \]
  from the identity functor on $C$ to the constant functor with value
  $c_0$.  By \ref{paragr:gray}, this defines an oplax
  transformation. Now, let
  \[
    \frgp{P} \to C
  \]
  be a polygraphic resolution of $C$. Since trivial fibrations are
  surjective on objects, we may choose an object $x_0$ of $P$ whose image in
  $C$ is $c_0$. Thus, we have a diagram
  \[
    \begin{tikzcd}[row sep=huge]
      \frgp{P} \ar[d]\ar[r,bend left,"\Unit{\frgp{P}}"] \ar[r,bend
      right,"\cst{x_0}"']& \frgp{P} \ar[d] \\
      C \ar[r,bend left,"\Unit{C}",""{name=A,below}] \ar[r,bend
      right,"\cst{c_0}"',""{name=B,above}]& C,
      \ar[from=A,to=B,Rightarrow]
    \end{tikzcd}
  \]
  such that the front and back faces commute. Since
  $\frgp{P}$ is cofibrant, and since the vertical arrows are trivial
  fibrations, we can apply \cref{lemma:liftoplax}, which yields an
  oplax transformation from the identity \oo\nbd-functor on $P$ to
  the constant \oo\nbd-functor with value $x_0$.

  Finally, applying \cref{cor:abeltrans}, we get a chain homotopy
  between the identity morphism \[\abelk{k}(P) \to
    \abelk{k}(P)\] and the composition
  \[\abelk{k}(P) \to \Z \to
  \abelk{k}(P),\] which proves that  $\abelk{k}(P) \to
  \Z$ is a quasi-isomorphism.
\end{proof}
\subsection{The comparison theorem}
\begin{theorem}\label{thm:polhom}
  Let $C$ be a $1$\nbd-category. Then, for any $1
  \leq k \leq \oo$, there is a canonical isomorphism
  \[
    \Hop{k}(C) \to \Ho(C)
  \]
  natural in $C$. In particular, for $k=\oo$, we get an isomorphism
  $\Hopol(C) \to \Ho(C)$. Furthermore, for any $k\geq 1$, these
  isomorphisms make the following triangle commute
  \[
    \begin{tikzcd}[column sep=small]      
         \Hopol(C)\ar[dr]
         \ar[rr,"\homnat_C"]&&\Hop{k}(C)\ar[dl]\\
         &\Ho(C).&
    \end{tikzcd}
  \]
\end{theorem}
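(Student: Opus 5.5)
The plan follows \cite{guetta:homcat}: compute $\Hop{k}(C)=\LL\abelk{k}(C)$ through the homotopy colimit decomposition of \cref{prop:hocolimtr}, and identify each piece using \cref{prop:hlgycontrcat}. Concretely, \cref{prop:hocolimtr} gives an isomorphism $\hocolim_{c\in C}\tr{C}{c}\cong C$ in $\Loc{\opCat{k}}$. Applying $\LL\abelk{k}$, and using \cref{prop:htpycocts} in the form~(\ref{eq:htpycolim}) with $F=\abelk{k}$ (left Quillen by \cref{prop:abelquillen}; its right adjoint preserves all weak equivalences since every object of $\Chp$ is fibrant), we get
\[
\Hop{k}(C)\cong\LL\abelk{k}\Bigl(\hocolim_{c\in C}\tr{C}{c}\Bigr)\cong\hocolim_{c\in C}\LL\fun{C}{\abelk{k}}(\tr{C}{-}).
\]

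Next we identify the diagram $\LL\fun{C}{\abelk{k}}(\tr{C}{-})$ in $\Loc{\fun{C}{\Chp}}$ with the constant diagram $\GZ{p_C^*}\Z$. The canonical morphisms $\tr{C}{c}\to\Term$ are natural in $c$, which yields a morphism of diagrams $\tr{C}{-}\to p_C^*\Term$. Applying $\LL\fun{C}{\abelk{k}}$ and the isomorphism~(\ref{eq:morphder}) for $u=c\colon\Term\to C$, we get $\GZ{c^*}\LL\fun{C}{\abelk{k}}\cong\LL\abelk{k}\GZ{c^*}$. So the evaluation at $c$ of this morphism is $\Hop{k}(\tr{C}{c})\to\Hop{k}(\Term)=\Z$. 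This is an isomorphism by \cref{prop:hlgycontrcat}, because $\tr{C}{c}$ has a terminal object $(c,\Unit{c})$. By conservativity (\cref{prop:der2}), the morphism of diagrams is an isomorphism. Taking $\hocolim_C$, we obtain $\Hop{k}(C)\cong\hocolim_C\cst{\Z}=\Ho(C)$. Concretely, this is the composite $\Hop{k}(C)\leftarrow\Hop{k}(\hocolim\tr{C}{-})\to\Ho(C)$, all of whose arrows are canonical and so natural in $C$. Naturality in $C$ then follows from the naturality clauses of \cref{prop:htpycocts}, together with the functoriality of $\Ho_{\Chp}(-,\Z)$ from \cref{prop:functorialityhomology} (a functor $F\colon C\to C'$ induces $\tr{C}{c}\to\tr{C'}{Fc}$). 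The groupoid case with $k=0$ runs the same way, since all cited results allow it.

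For the triangle, we compare the two instances of this isomorphism, for $\oCat$ and for $\opCat{k}$. The relevant Quillen adjunction is $\opifun{k}\dashv\oemb{k}$, where $\oemb{k}$ preserves all weak equivalences. Using $\abel\cong\abelk{k}\opifun{k}$ (\cref{lemma:abelpi}), we have $\LL\abel\cong\LL\abelk{k}\LL\opifun{k}$, and $\homnat_C$ is this composite followed by the counit $\LL\opifun{k}\RR\oemb{k}C\to C$. We then assemble a diagram in which every arrow from the $\oo$ side goes to the $k$ side via $\LL\opifun{k}$ and the counit. Its ingredients are the commutation of $\hocolim$ with $\LL\opifun{k}$ (again \cref{prop:htpycocts}, naturally in $F$), the naturality of the counit applied to the diagram $\tr{C}{-}$ and to the map $\hocolim\tr{C}{-}\to C$, and the fact that each identification with $\Z$ factors through $\Term$, which is cofibrant in both structures and fixed by $\opifun{k}$.

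The main obstacle is this last coherence: checking that the counit of $\LL\opifun{k}\dashv\GZ{\oemb{k}}$ intertwines the two homotopy-colimit isomorphisms. Following \ref{paragr:techyp}, I would take mates and work with the right adjoints $\GZ{\oemb{k}}$ and $\GZ{\fun{C}{\oemb{k}}}$, which commute strictly with $\GZ{p_C^*}$. The required square then reduces to the naturality-in-$F$ statement of \cref{prop:htpycocts}.
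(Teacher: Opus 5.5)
Your proposal follows the paper's proof essentially step for step. The first part is the same zig-zag: \cref{prop:hocolimtr}, then \eqref{eq:cocontinuous} for the left Quillen functor $\abelk{k}$, then \cref{prop:hlgycontrcat} on the slices together with \eqref{eq:morphder} and the conservativity of \cref{prop:der2}. The triangle is obtained, as in the paper, by comparing the $\oo$ and $k$ instances via naturality of $\homnat$ and naturality in $F$ of \cref{prop:htpycocts}; your counit-and-mates formulation only makes explicit the passage through $\oemb{k}$ that the paper suppresses.

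One step is left implicit, namely the paper's arrow (a). Identifying $\LL\fun{C}{\abelk{k}}(p_C^*\Term)$ with $\GZ{p_C^*}\Z$ is not automatic, because a constant diagram is generally not projectively cofibrant. It is supplied by \eqref{eq:morphder} for $u=p_C\colon C\to\Term$.
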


\begin{proof}
  Let us start with the first part of the theorem. Recall that if $F
  \colon \M \to \M'$ is a functor, and $C$ is a small category we
  denote by $\fun{C}{F} \colon \fun{C}{\M} \to \fun{C}{\M'}$ the
  functor induced by postcomposition. Consider the
  following zig-zag in $\Derp{\Z}$
  \[
    \begin{tikzcd}
      \hocolim_{C}p^*_C(\LL \abelk{k}(\Term))&&\\
    \hocolim_{C}\LL \fun{C}{\abelk{k}}(p_C^*\Term) \ar[u,"(a)"]&\ar[l,"(b)"'] \hocolim_{C}\LL\fun{C}{
     \abelk{k}}(\tr{C}{-})\ar[r,"(c)"]&  \LL
   \abelk{k}(\hocolim_{C}\tr{C}{-})\ar[d,"(d)"]\\
   && \LL\abelk{k}(C),
  \end{tikzcd}
  \]
  where
  \begin{itemize}
    \item $(a)$ and $(c)$ are the universal morphisms constructed in \ref{paragr:univmor},
    \item $(b)$ comes from the fact that $p_C^*\Term$ is the terminal
      object of $\funopCat{k}{C}$,
      \item $(d)$ is induced by the cocone $(\tr{C}{c}\to
  C)_{c \in \Ob(C)}$ (see \ref{paragr:colimtr}).
\end{itemize}
All these morphisms of $\Derp{\Z}$ are in fact isomorphisms:
\begin{itemize}
  \item $(a)$ by \eqref{eq:morphder} of \cref{prop:htpycocts} and the fact that $\abelk{k}$
    is a left Quillen functor (\cref{prop:abelquillen}),
      \item $(b)$ by \cref{prop:der2}, \cref{prop:hlgycontrcat} and the
    fact that for each object $c$ of $C$ the category $\tr{C}{c}$ has
    a terminal object,
  \item $(c)$ by \eqref{eq:cocontinuous} of \cref{prop:htpycocts} and the fact that $\abelk{k}$
    is a left Quillen functor,
  \item $(d)$ by \cref{prop:hocolimtr}.
  \end{itemize}
  Finally, since by
  \cref{paragr:abeldisk} and the fact that $\Dsk{0}=\Term$, we have
  \[
    \LL \abelk{k}(\Term) \cong \Z.
  \]
  Since, by definition, we have
  \[
    \Ho(C)=\hocolim_{C}p^*_C(\Z),
    \] this yields an isomorphism
  \[
    \Ho(C) \cong \Hop{k}(C),
  \]
  which is natural in $C$ by construction, as all the morphisms in the
  previous zig-zag are natural in $C$.

  For the second part, consider the following diagram in $\Derp{\Z}$
  \[
    \begin{tikzcd}
      \LL\abel(C)\ar[r] & \LL\abelk{k}(C)\\
       \LL\abel(\hocolim_{C}\tr{C}{-})\ar[r]\ar[u]&
       \LL\abelk{k}(\hocolim_{C}\tr{C}{-})\ar[u]\\
       \hocolim_{C}\fun{C}{\LL
     \abel}(\tr{C}{-})\ar[r] \ar[u]\ar[d]& \hocolim_{C}\fun{C}{\LL
     \abelk{k}}(\tr{C}{-})\ar[u]\ar[d]\\
   \hocolim_{C}\LL \fun{C}{\abel}(p_C^*\Term)\ar[r] \ar[d]& \hocolim_{C}\LL
   \fun{C}{\abelk{k}}(p_C^*\Term)\ar[d] \\
    \hocolim_{C}p^*_C(\LL \abel(\Term))\ar[r]& \hocolim_{C}p^*_C(\LL
    \abelk{k}(\Term)),
    \ar[from=1-1,to=2-2,phantom,description, "(A)"]
    \ar[from=2-1,to=3-2,phantom,description,"(B)"]
    \ar[from=3-1,to=4-2,phantom,description,"(C)"]
    \ar[from=4-1,to=5-2,phantom,description,"(D)"]
  \end{tikzcd}
\]
where the horizontal arrows are induced by the natural transformation
\eqref{transnat} from \cref{paragr:subtlety}
\[
    \homnat \colon \LL \abel\circ \oemb{k} \Rightarrow \LL \abelk{k}.
  \]
  Note that for simplicity we have not made the functor
  $\oemb{k} \colon \Loc{\opCat{k}}\to \Loc{\oCat}$ appear in the above
  diagram. This naturality implies that the squares $(A)$ and $(C)$ are
  commutative. Furthermore, the commutativity of the squares $(B)$ and
  $(D)$ follows from the naturality in $F$ of \eqref{eq:morphder} and
  \eqref{eq:cocontinuous} from \cref{prop:htpycocts}.

  To conclude, we only need to notice that the following triangle is
  commutative as an immediate computation shows
  \[
    \begin{tikzcd}
      \LL \abel(\Term) \ar[d,"\cong"']\ar[r] & \LL \abelk{k}(\Term)\ar[dl,"\cong"]\\
      \Z.&
    \end{tikzcd}
  \]
\end{proof}
\begin{corollary}
  For any $1$\nbd-category $C$ and any $1 \leq k \leq \oo$, the
  canonical natural morphism
  \[
   \homnat_C \colon \Hopol(C) \to \Hop{k}(C)
  \]
  is an isomorphism. If $C$ is a groupoid, this also holds for $k=0$.
\end{corollary}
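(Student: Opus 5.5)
The corollary should follow from \cref{thm:polhom} by two-out-of-three for isomorphisms. For a $1$-category $C$ and $1\leq k\leq\oo$, that theorem gives a commutative triangle
\[
  \Hopol(C)\to\Ho(C),\qquad \Hop{k}(C)\to\Ho(C),\qquad \homnat_C\colon\Hopol(C)\to\Hop{k}(C),
\]
in which the two downward arrows are the canonical isomorphisms: the first is the case $k=\oo$ of the theorem, the second the case of the given $k$. Commutativity says that $\homnat_C$ equals the composite of the isomorphism $\Hopol(C)\to\Ho(C)$ with the inverse of $\Hop{k}(C)\to\Ho(C)$. Hence $\homnat_C$ is an isomorphism in $\Derp{\Z}$, which settles the first assertion.

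For a groupoid $G$ and $k=0$, I would redo the proof of \cref{thm:polhom} with $k=0$ allowed, regarding $G$ as an object of $\opCat 0$. First, I would check that every ingredient of the zig-zag $(a)$--$(d)$ has its groupoidal version available.
\begin{itemize}
\item $(a)$ and $(c)$ need only that $\abelk 0$ is left Quillen, which is \cref{prop:abelquillen}, together with \cref{prop:htpycocts}.
\item $(b)$ needs \cref{prop:hlgycontrcat} for $k=0$. This applies because each slice $\tr Gc$ is again a groupoid (see \ref{ssubsec:resol_grp}) and has a terminal object, namely $\Unit{c}$.
\item $(d)$ needs \cref{prop:hocolimtr} in its groupoid form, which rests on \cref{thm:projresol_grp}.
\item $\LL\abelk0(\Term)\cong\Z$, since $\Term=\Dsk0$ is cofibrant in $\opCat0$.
\end{itemize}
This yields a natural isomorphism $\Hop0(G)\to\Ho(G)$.

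Second, I would rebuild the comparison diagram with squares $(A)$--$(D)$ for $k=0$. Its commutativity uses only the naturality of $\homnat$ and the naturality in $F$ from \cref{prop:htpycocts}, neither of which depends on $k$. I would then apply the same two-out-of-three argument to the resulting triangle.

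The main obstacle is confirming that no step in the proof of \cref{thm:polhom} silently uses $k\geq1$. The one place this could happen is in handling slice categories and the cocone $\tr Gc\to G$. One must check that these are morphisms of \ook0-categories, that \cref{lemma:colim} holds in $\opCat0$ (it does, since $\opCat0\to\oCat$ preserves colimits), and that the oplax transformation in the proof of \cref{prop:hlgycontrcat} makes sense in $\opCat0$. All of this is guaranteed by the groupoidal statements already recorded in the paper, so I expect this verification to be routine.
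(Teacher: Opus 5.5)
Your proposal is correct and follows the paper's own (implicit) argument. The first assertion is read off from the commutative triangle of \cref{thm:polhom} by two-out-of-three. For a groupoid with $k=0$, one reruns the proof of that theorem using the groupoid cases of \cref{prop:hlgycontrcat}, \cref{prop:hocolimtr} and \cref{thm:projresol_grp}, exactly as you outline.
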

\begin{remark}
  Adapting \emph{mutatis mutandis} the proof of
  \cref{thm:polhom}, one can prove more generally that, for a
  $1$-category $C$ and for any $1 \leq k \leq k' \leq \oo$, there is a canonical natural isomorphism
  \[
     \Hop{k'\!}(C)\cong  \Hop{k}(C).
  \]
\end{remark}
